\documentclass[10pt, colorlinks]{article}

\usepackage{latexsym, amssymb, amsmath, amsthm, amscd}
\usepackage[all,cmtip]{xy}
\usepackage{amsmath}
\usepackage{amsthm}
\usepackage{amssymb}
\usepackage{amsfonts}
\usepackage{amsrefs}
\usepackage{color,authblk}
\usepackage{tikz}
\usepackage{amscd} 
\usepackage{comment}
\usepackage{mathrsfs}
\usepackage{comment}
\usepackage[all]{xy}
\usepackage{graphicx}
\usepackage{authblk}
\usepackage{hyperref}
\usepackage{bbm}
\usepackage{fullpage}
\usepackage[all,cmtip]{xy}
\usepackage{relsize}
\usepackage{amsmath,amscd}
\usepackage{tikz-cd}
\usepackage{tikz}
\usetikzlibrary{matrix}
\usepackage[mathcal]{euscript}
\usepackage{txfonts}

\usepackage{hyperref}

\numberwithin{equation}{section} \DeclareMathSizes{2}{10}{12}{13}
\parindent=0.0in

\textwidth 7in
\textheight 8.2in

\oddsidemargin -0.2in
\evensidemargin -0.5in

\makeatletter
\newcommand*{\doublerightarrow}[2]{\mathrel{
		\settowidth{\@tempdima}{$\scriptstyle#1$}
		\settowidth{\@tempdimb}{$\scriptstyle#2$}
		\ifdim\@tempdimb>\@tempdima \@tempdima=\@tempdimb\fi
		\mathop{\vcenter{
				\offinterlineskip\ialign{\hbox to\dimexpr\@tempdima+1em{##}\cr
					\rightarrowfill\cr\noalign{\kern.5ex}
					\rightarrowfill\cr}}}\limits^{\!#1}_{\!#2}}}
\newcommand*{\triplerightarrow}[1]{\mathrel{
		\settowidth{\@tempdima}{$\scriptstyle#1$}
		\mathop{\vcenter{
				\offinterlineskip\ialign{\hbox to\dimexpr\@tempdima+1em{##}\cr
					\rightarrowfill\cr\noalign{\kern.5ex}
					\rightarrowfill\cr\noalign{\kern.5ex}
					\rightarrowfill\cr}}}\limits^{\!#1}}}
\makeatother

\newtheorem{thmi}{Theorem}

\newtheorem{thm}{Proposition}[section]
\newtheorem{Thm}[thm]{Theorem}
\newtheorem{rem}[thm]{Remark}

\newtheorem{cor}[thm]{Corollary}
\newtheorem{eg}[thm]{Example}
\newtheorem{lem}[thm]{Lemma}
\newtheorem{defn}[thm]{Definition}

\title{\Large Characterizations of higher derivations and higher differential torsion theories in Eilenberg-Moore categories of monads}

\author{Dipti Paik \footnote{Department of Mathematics, Indian Institute of Technology, Delhi. Email: paikdipti09@gmail.com} $\qquad$ Divya Ahuja \footnote{Department of Mathematics, The Institute of Mathematical Sciences, Chennai. Email: divyaahuja1428@gmail.com.} $\qquad$ Surjeet Kour\footnote{Department of Mathematics, Indian Institute of Technology, Delhi. Email: koursurjeet@gmail.com.} }

\date{}

\begin{document}
	
	\maketitle 
	\begin{abstract}
        Let \(T\) be a monad on a category \(\mathscr{C}\). In this paper, we introduce the notion of higher derivations on the monad \(T\) and characterize them in terms of ordinary derivations on $T$. We also define higher derivations on modules over the monad $T$ in the Eilenberg-Moore category \(EM_T\) and establish their characterization in a similar manner. We provide several examples that illustrate and support our results. Furthermore, we examine the conditions under which a torsion theory on \(EM_T\) is higher differential, and show that this holds if and only if every higher derivation on a module \(M \in EM_T\) extends uniquely to its module of quotients \(Q_{\tau}(M)\).

	\end{abstract}

	\medskip
	{MSC(2020) Subject Classification: 13N15, 16S90, 18C20, 18E40}
	
	\medskip
	{Keywords:} Higher differential torsion theory, Hereditary torsion theory,  Derivations, Eilenberg-Moore categories
	
	\medskip
	\hypersetup{linktocpage}

	\smallskip
	\hypersetup{linktocpage}
	\section{Introduction}
Throughout this paper, we assume that $\mathfrak F$ is a field of characteristic zero, $A$ is an associative ring with unity, and ${}_{A}\mathrm{Mod}$ is the category of left $A$-modules.
 An additive map $\rho : A \to A$ is called a derivation on $A$ if it satisfies the Leibniz rule, i.e., $
\rho(ab) = \rho(a)b + a\rho(b)$ for all $a,b\in A.$ Given a derivation $\rho$ on $A$ and a left $A$-module $M$, an additive map $\varrho : M \to M$ is called a $\rho$-derivation on $M$ if $
\varrho(am) = a\varrho(m) + \rho(a)m$ for all $a \in A,\ m \in M$.
Higher derivations generalize this notion in a natural way. A family of additive maps $\Omega = \{\Omega_i : A \to A\}_{i=0}^n$ is called a higher derivation of order $n$ on $A$ if
$\Omega_k(ab) = \sum_{i=0}^k \Omega_i(a)\Omega_{k-i}(b)
$ for all $a,b \in A$ and $ k = 0,1,\ldots,n$.
Here, $\Omega_0$ is the identity map on $A$, and $\Omega_1$ coincides with an ordinary derivation. Let $\Omega$ be a higher derivation of order $n$ on $A$, and let $M \in {}_{A}\mathrm{Mod}$. A family of additive maps
$\Psi = \{\Psi_i : M \to M\}_{i=0}^n,$
with $\Psi_0$ the identity map on $M$, is called a higher $\Omega$-derivation of order $n$ on $M$ if
$
\Psi_k(am) = \sum_{i=0}^k \Omega_i(a)\Psi_{k-i}(m)
$  for all $a \in A,\ m \in M,$ and $k=0,\ldots,n.$
In particular, $\Psi_1$ is an ordinary $\Omega_1$-derivation on $M$.

\smallskip
The interaction between torsion theories and derivations has been studied extensively in the classical module theoretic setting.  For a torsion theory $\tau=(\mathscr T,\mathscr F)$ on the category of $A$-modules $_A\mathrm{Mod}$, it is known that $\tau$ is hereditary if and only if the module of quotients exists for every module $M$, and in this case the module of quotients $Q_{\tau}(M)$ is the $\tau$-injective envelope of the associated torsion-free module $M/M_{\tau}$, where $M_{\tau}$ denotes the $\tau$-torsion submodule of $M$ (see \cite{PB2}, \cite{OK}). Further, a hereditary torsion theory $\tau$ is called differential if for every $M\in~_A\mathrm{Mod}$ and
every $\rho$-derivation $\varrho$ on $M$, one has $\varrho(M_\tau) \subseteq M_\tau$ (see \cite{PB}). Differential torsion theories are significant as they allow the unique extension of $\rho$-derivation $\varrho$ on any object $M\in~_{A}\mathrm{Mod}$ to its module of quotients. 
 It was shown in \cite{Gol2} that if $\tau$ is a torsion theory on ${}_{A}\mathrm{Mod}$ and $M$ is $\tau$-torsion-free, then every $\rho$-derivation $\varrho$on $M$ extends to a $\rho$-derivation on the module of quotients $Q_\tau(M)$. The uniqueness of such an extension was later established by Bland in \cite{PB}. Lomp and van den Berg in \cite{LB} showed that all hereditary torsion theories in $_{A}\mathrm{Mod}$ are differential. In \cite{AB}, Banerjee extended this result for preadditive categories by proving that if $\rho$ is a derivation on a preadditive category $\mathcal{A},$ then every hereditary torsion theory on $_{\mathcal A}\mathrm{Mod}$ must be differential and every $\rho$-derivation on an  $\mathcal A$-module $\mathcal{M}$ can be extended uniquely to its module of quotients. In a recent work \cite{DA}, Ahuja and Kour used monad in place of the small preadditive category and proved that if $\delta: T\longrightarrow T$ is a derivation on a monad $T$, then every $\delta$-derivation on a $T$-module $M$ extends uniquely to a $\delta$-derivation on the module of quotients of $M$. 
	
	\smallskip
	In the context of higher derivations, Rim in \cite{RIM} examined the extension problem for higher antiderivations and showed that, under suitable conditions, such maps on a module $M$ extend to its module of quotients. When the ring involution is trivial, this reduces to Ribenboim’s notion of higher derivations~ (see \cite{RP80}), thereby generalizing the results of Golan~\cite{Gol2}. Bland in \cite{PPB} later characterized the existence and uniqueness of these extensions.

     \smallskip
     In this paper, we will generalize the theory of higher derivations by Bland on rings and modules in \cite{PPB} to the context of higher derivations on monads and on the Eilenberg-Moore category of modules over a monad.  We begin with a monad $(T,\theta,\zeta)$  on a category  $\mathscr C$. We first introduce the notion of higher derivation $\Delta$ of order $n>1$ on a monad $T$ (see Definition \ref{D2.4} and Definition \ref{D2.6}) in Section~\ref{section2}. Furthermore, when $\mathscr C$ is a $\mathfrak F$-linear category, we give a characterization of higher derivation on a monad $T$ in terms of ordinary derivations on $T$, thereby extending Mirzavaziri’s characterization for algebras \cite{MIR}. In Section~\ref{section3}, we study modules $M \in EM_T$ and introduce the
notion of a higher $\Delta$-derivation of order $n > 1$ on $M$. Motivated
by the work of Banerjee and Kour \cite[Theorem~5.5]{AK}, which shows that higher derivations on modules
over commutative rings are completely determined by sequences of
ordinary derivations, we provide a characterization of higher $\Delta$-derivation on $M$ in terms of ordinary derivations on $M$ when $\mathscr{C}$ is a $\mathfrak F$-linear category.

\smallskip 
From Section~\ref{section4} onwards, we consider $\mathscr C$ to be a Grothendieck category, i.e., an abelian category in which colimits exist, filtered colimits 
are exact, and $\mathscr C$ has a set of generators $\{G_i\}_{i\in\Lambda}$. Let $(T, \theta, \zeta, \Delta)$ be a higher differential monad of order $n$ on $\mathscr{C}$ such that $T$ is exact and preserves colimits, and let $\tau = (\mathscr{T}, \mathscr{F})$ be a hereditary torsion theory on
$EM_T$. It follows from \cite[Corollary~3.4]{LLV} that hereditary torsion
theories on $EM_T$ are in one-to-one correspondence with families of
Gabriel filters. Let $\mathscr{L}=\{\mathscr L_{TG_i}\}_{i\in \Lambda}$ be the corresponding family of Gabriel filters on $EM_T$. Then, we study the higher differential torsion theory on $EM_T$ and prove the following main result of Section \ref{section4}.
	\begin{thmi}(see Theorem \ref{T3.4})
		Let $(T, \theta, \zeta,\Delta)$ be a higher differential monad of order $n$ on $\mathscr C$ that is exact and preserves colimits, and $\tau$ be the hereditary torsion theory on $EM_{T}$. Then the following statements are equivalent.
		
		\smallskip
		i) Every Gabriel filter corresponding to $\tau$ is $\Delta$-invariant.
		
		\smallskip
		ii) $\tau$ is a higher differential torsion theory of order $n$.
		
		\smallskip
		iii) For any $(M,f_M) \in EM_T$, every higher $\Delta$-derivation $\{D_n\}$ of order $n$, every submodule $N \subseteq M_\tau$, and every $j \in \{0,\ldots,n\}$, there exists $K \in \mathcal{L}_{TG_i}$ such that
		$(\Delta_j)_{G_i}(K) \subseteq \ker(TG_i \to N).$
		
		\smallskip
		iv) For any $(M,f_M) \in EM_T$, every higher $\Delta$-derivation $\{D_n\}$ of order $n$ on $M$, and every $N \subseteq M_\tau$, there exists $K \in \mathcal{L}_{TG_i}$ such that 
		$(\Delta_j)_{G_i}(K) \subseteq \ker(TG_i \to N)$ for all $j = 0,1,\ldots,n$.
		\end{thmi}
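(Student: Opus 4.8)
The plan is to prove the cyclic chain of implications (i) $\Rightarrow$ (ii) $\Rightarrow$ (iii) $\Rightarrow$ (iv) $\Rightarrow$ (i), which yields the equivalence of all four statements. Throughout I will use two structural facts. First, by \cite[Corollary~3.4]{LLV} the hereditary torsion theory $\tau$ is completely recorded by the family $\mathcal L=\{\mathcal L_{TG_i}\}_{i\in\Lambda}$ of Gabriel filters on the free modules $TG_i$: a subobject $K\subseteq TG_i$ belongs to $\mathcal L_{TG_i}$ exactly when $TG_i/K$ is $\tau$-torsion, and for any $(M,f_M)\in EM_T$ and any morphism $x\colon TG_i\to M$ the image of $x$ lies in $M_\tau$ if and only if $x$ kills some $K\in\mathcal L_{TG_i}$. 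Second, since $T$ is exact and preserves colimits, $\{TG_i\}_{i\in\Lambda}$ is a generating family of $EM_T$, so these tests on the $TG_i$ detect torsion in every module, and the components $(\Delta_k)_{G_i}\colon TG_i\to TG_i$ are the maps through which $\Delta$ acts on free modules.

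For (i) $\Rightarrow$ (ii), let $(M,f_M)\in EM_T$ and let $\{D_k\}_{k=0}^n$ be a higher $\Delta$-derivation on $M$; I must show $D_k(M_\tau)\subseteq M_\tau$ for each $k$. Take $x\colon TG_i\to M$ whose image lies in $M_\tau$, so $x$ annihilates some $K\in\mathcal L_{TG_i}$. Expressing $x$ through the free-forgetful adjunction and using the defining identity of a higher $\Delta$-derivation over $(M,f_M)$ — which rewrites $D_k\circ f_M$ as a sum over $l\le k$ of composites of $f_M$ with $D_{k-l}$ and the $l$-th component of $\Delta$ — one obtains by induction on $k$ that $D_k\circ x$ is a combination of morphisms of the form $(\text{morphism }TG_i\to M)\circ(\Delta_l)_{G_i}$ with $l\le k$. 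Statement (i), the $\Delta$-invariance of $\mathcal L_{TG_i}$, provides $K'\in\mathcal L_{TG_i}$ with $(\Delta_l)_{G_i}(K')\subseteq K$ for all $l\le n$; hence $D_k\circ x$ kills $K'$, so the image of $D_k(x)$ lies in $M_\tau$. As $M$ and $\{D_k\}$ were arbitrary, $\tau$ is a higher differential torsion theory of order $n$.

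The implication (ii) $\Rightarrow$ (iii) runs the same dictionary backwards: given $(M,f_M)$, a higher $\Delta$-derivation $\{D_k\}$, a submodule $N\subseteq M_\tau$, a morphism $TG_i\to N$ and an index $j$, the fact that $D_j$ preserves $M_\tau$ (by (ii)) together with the identity above lets one extract a subobject $K\subseteq TG_i$ with $TG_i/K$ torsion — so $K\in\mathcal L_{TG_i}$ — on which the composite $TG_i\xrightarrow{(\Delta_j)_{G_i}}TG_i\to N$ vanishes, that is $(\Delta_j)_{G_i}(K)\subseteq\ker(TG_i\to N)$. Next, (iii) $\Rightarrow$ (iv) is brief and is the only place where having order $n$ rather than $1$ costs anything: choosing for each $j\in\{0,1,\dots,n\}$ a filter element $K_j$ as in (iii) and setting $K:=\bigcap_{j=0}^n K_j$, the subobject $K$ still lies in $\mathcal L_{TG_i}$ because a Gabriel filter is closed under finite intersections and $\{0,\dots,n\}$ is finite, and $K\subseteq K_j$ gives $(\Delta_j)_{G_i}(K)\subseteq(\Delta_j)_{G_i}(K_j)\subseteq\ker(TG_i\to N)$ simultaneously for all $j$.

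The remaining implication (iv) $\Rightarrow$ (i) is where I expect the real difficulty. Given $L\in\mathcal L_{TG_i}$ and $j\le n$ I must produce $L'\in\mathcal L_{TG_i}$ with $(\Delta_j)_{G_i}(L')\subseteq L$, and the obvious candidate $M=TG_i/L$ fails, because endowing it with a higher $\Delta$-derivation induced from $\{(\Delta_k)_{G_i}\}$ would already presuppose the $\Delta$-invariance of $L$ that we are trying to prove. The way around this is to work instead with an auxiliary test module — the categorical analogue of the differential (skew) polynomial module $A[x;\delta]/A[x;\delta]L$ — built functorially from $T$, $\Delta$ and $L$; by construction it carries a genuine higher $\Delta$-derivation, it is $\tau$-torsion since $L\in\mathcal L_{TG_i}$, and it comes with a canonical morphism $TG_i\to N$ into a submodule $N$ of its torsion part whose kernel, once the construction is unwound, is controlled by $L$. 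Feeding this module into (iv) returns a $K\in\mathcal L_{TG_i}$ with $(\Delta_j)_{G_i}(K)\subseteq L$, the required $L'$. The delicate point, and the one that uses the $\mathfrak F$-linearity of $\mathscr C$ and the componentwise description of $\Delta$ from Definitions~\ref{D2.4} and~\ref{D2.6}, is verifying that the structure maps of this auxiliary module genuinely expose each $(\Delta_j)_{G_i}$, so that (iv) recovers the full strength of $\Delta$-invariance and not merely a weaker statement; alternatively, one could reduce via the Section~\ref{section3} characterization of higher $\Delta$-derivations by sequences of ordinary derivations and invoke the order-one result of Ahuja and Kour \cite{DA}.
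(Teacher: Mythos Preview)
Your cycle (i)$\Rightarrow$(ii)$\Rightarrow$(iii)$\Rightarrow$(iv)$\Rightarrow$(i) and your arguments for the first three implications are essentially the paper's own. The paper proves (i)$\Rightarrow$(ii) by the same induction, lifting a morphism $TG_i\to D_nN$ through the epimorphism $N\twoheadrightarrow D_nN$ via projectivity of $G_i$, then using the higher derivation identity and $\Delta$-invariance exactly as you outline; (ii)$\Rightarrow$(iii) is again by induction on $n$, intersecting the kernels of $TG_i\to D_jN$ for $j<n$ together with an element of the filter controlling the lower $\Delta_t$; and (iii)$\Rightarrow$(iv) is your finite intersection argument verbatim.

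Where you go astray is (iv)$\Rightarrow$(i). You identify this as the hard step and propose an auxiliary skew-polynomial-type module in order to manufacture a test object carrying a genuine higher $\Delta$-derivation. In the paper this implication is a one-line triviality: given $I\in\mathcal L_{TG_i}$, the quotient $TG_i/I$ is $\tau$-torsion, so take $M=N=TG_i/I$, $M_\tau=M$, and the canonical projection $TG_i\to TG_i/I$ whose kernel is $I$; then (iv) hands you $K\in\mathcal L_{TG_i}$ with $(\Delta_j)_{G_i}(K)\subseteq I$ for all $j\le n$, which is exactly $\Delta$-invariance. Your worry that one must first equip $TG_i/I$ with a higher $\Delta$-derivation is a misreading of (iv): observe that the conclusion of (iv) makes no reference whatsoever to the higher $\Delta$-derivation $D$, so the universal quantifier over $D$ is redundant --- the statement is really about submodules $N\subseteq M_\tau$ and morphisms $TG_i\to N$, nothing more. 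No differential polynomial construction, no reduction to ordinary derivations via Section~\ref{section3}, and no $\mathfrak F$-linearity are needed here.
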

	Finally in Section \ref{section5} we study higher $\Delta$-derivation on the module of quotients \(Q_\tau (M)\) of \(M\) with respect to the torsion theory \(\tau\) on \(EM_T\). Motivated by Rim’s definition of extensions of antiderivations~\cite{RIM}, we introduce the notion of a higher \(\Delta\)-derivation on \(Q_\tau (M)\) when \(M\) is torsion-free, and subsequently establish the following final main result of our paper.	
	\begin{thmi}(see Theorem \ref{T5.5})
		Let $\tau$ be a hereditary torsion theory on $EM_T$ and $(M,f_M)$ be any object in $EM_T$. Then for any higher $\Delta$-derivation $D: M\longrightarrow M$ of order $n$ can be extended to a unique higher $\Delta$-derivation $\widetilde{D}:Q_\tau (M)\longrightarrow Q_\tau (M)$ of order $n$ if and only if $\tau $ is a higher differential torsion theory of order $n$.
	\end{thmi}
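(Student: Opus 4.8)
The plan is to establish the two implications separately, with Theorem~\ref{T3.4} as the bridge: ``$\tau$ is higher differential of order $n$'' is equivalent to ``each Gabriel filter $\mathscr L_{TG_i}$ attached to $\tau$ is $\Delta$-invariant'', and via the Mirzavaziri-type decomposition expressing $\Delta$ through a family $\{\sigma_k\}_{k=1}^{n}$ of ordinary derivations on $T$ (Sections~\ref{section2}--\ref{section3}, \cite{MIR,AK}) together with $\mathrm{char}\,\mathfrak F=0$, this also makes each $\mathscr L_{TG_i}$ invariant under every $\sigma_k$. For the \emph{forward} implication, I would assume every higher $\Delta$-derivation extends to its module of quotients, fix $(M,f_M)$ and a higher $\Delta$-derivation $D=\{D_j\}_{j=0}^{n}$ on $M$ with extension $\widetilde D=\{\widetilde D_j\}$ on $Q_\tau(M)=Q_\tau(M/M_\tau)$, and consider the localization morphism $\lambda\colon M\to Q_\tau(M)$. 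Since $\tau$ is hereditary on the Grothendieck category $EM_T$ we have $\ker\lambda=M_\tau$; precomposing the defining compatibility $\widetilde D_j\circ\lambda=\lambda\circ D_j$ with $M_\tau\hookrightarrow M$ and using $\lambda|_{M_\tau}=0$ gives $\lambda\circ(D_j|_{M_\tau})=\widetilde D_j\circ 0=0$, so $D_j$ carries $M_\tau$ into $\ker\lambda=M_\tau$ for every $j$. As this holds for all $M$, all higher $\Delta$-derivations on them, and all $j$, part~(ii) of Theorem~\ref{T3.4} yields that $\tau$ is a higher differential torsion theory of order $n$.

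For the \emph{reverse} implication, assume $\tau$ is higher differential of order $n$ and let $D=\{D_j\}$ be a higher $\Delta$-derivation on $(M,f_M)$. First I would reduce to the torsion-free case: since $D_j(M_\tau)\subseteq M_\tau$ for all $j$, the family $D$ descends along $p\colon M\twoheadrightarrow\overline M:=M/M_\tau$ to a higher $\Delta$-derivation $\overline D$ on the $\tau$-torsion-free object $\overline M$, and because $Q_\tau(M)=Q_\tau(\overline M)=:Q$ with $M\to Q$ factoring as $M\xrightarrow{p}\overline M\hookrightarrow Q$, $\overline M\hookrightarrow Q$ essential and $Q/\overline M$ torsion, it suffices to extend $\overline D$ to $Q$. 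To do this I invoke the Section~\ref{section3} characterization: $\overline D$ is assembled by explicit combinatorial formulas from a uniquely determined sequence $\{d_k\}_{k=1}^{n}$ of ordinary $\sigma_k$-derivations on $\overline M$. Since each $\mathscr L_{TG_i}$ is $\sigma_k$-invariant and $\overline M$ is torsion-free, the single-derivation extension theorem \cite{DA} extends each $d_k$ uniquely to an ordinary $\sigma_k$-derivation $\widetilde d_k$ on $Q$; feeding $\{\widetilde d_k\}$ into the same formulas produces a family $\widetilde D$ on $Q$ which, by the equivalence in the characterization, is a higher $\Delta$-derivation, and which restricts to $\overline D$ on $\overline M$ by naturality of the formulas together with $\widetilde d_k|_{\overline M}=d_k$. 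Composing with $p$ then extends $D$.

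Uniqueness is an induction on $j$: if $\widetilde D^{(1)},\widetilde D^{(2)}$ both extend $\overline D$, put $h_j:=\widetilde D^{(1)}_j-\widetilde D^{(2)}_j$; then $h_0=0$, and if $h_i=0$ for $i<j$, subtracting the higher-Leibniz identities and cancelling the lower terms shows $h_j$ is a morphism $(Q,f_Q)\to(Q,f_Q)$ in $EM_T$ vanishing on $\overline M$, hence factoring through the torsion object $Q/\overline M$; since $Q$ is torsion-free this morphism is $0$, so $h_j=0$. The hardest point will be the construction step: ensuring that the reassembled $\widetilde D$ genuinely satisfies the higher-Leibniz identities on all of $Q$, not merely on $\overline M$, and is independent of auxiliary choices. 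Channeling this through the Section~\ref{section3} characterization displaces the difficulty onto two items that must be pinned down carefully --- that $\Delta$-invariance of each $\mathscr L_{TG_i}$ forces $\sigma_k$-invariance (read off the Mirzavaziri formulas, using $\mathrm{char}\,\mathfrak F=0$), and that the ``higher $\Delta$-derivation $\leftrightarrow$ sequence of ordinary derivations'' correspondence is natural enough to commute with restriction along $\overline M\hookrightarrow Q$. A self-contained alternative that follows Bland \cite{PPB} most closely is a direct induction building $\widetilde D_j$ on $Q$ from the forced relation $a\cdot\widetilde D_j(q)=\widetilde D_j(a\cdot q)-\sum_{i=1}^{j}\Delta_i(a)\,\widetilde D_{j-i}(q)$ for $a$ in a suitable $K\in\mathscr L_{TG_i}$ sending $q$ into $\overline M$, using the already-built $\widetilde D_{<j}$, the $\Delta$-invariance of $K$ for coherence, and torsion-freeness of $Q$ for well-definedness --- more explicit but considerably longer.
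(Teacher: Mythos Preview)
Your forward implication (extension $\Rightarrow$ higher differential) and your uniqueness argument match the paper's exactly: both use $\ker\Phi_M=M_\tau$ together with the compatibility $\widetilde D_j\circ\Phi_M=\Phi_M\circ D_j$, and for uniqueness both show inductively that $h_j=\widetilde D^{(1)}_j-\widetilde D^{(2)}_j$ is an $EM_T$-morphism vanishing on the image of $\Phi_M$, hence factoring through the torsion cokernel into the torsion-free $Q_\tau(M)$.

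For the reverse implication your \emph{main} route is genuinely different from the paper's. The paper does not pass through the Mirzavaziri decomposition at all: it constructs $\widetilde D_k$ directly on $Q_\tau(M)$ by the explicit recursive formula of Lemma~\ref{M},
\[
\widetilde D_k f(K)=D_k(f(K))-f\bigl((\Delta_k)_{G_i}(K)\bigr)-\sum_{\substack{t+j=k\\ t,j>0}}\widetilde D_t f\bigl((\Delta_j)_{G_i}(K)\bigr),
\]
verifies this is a well-defined element of $Q_\tau(M)(TG_i)$, checks compatibility with $\Phi_M$ (Lemma~\ref{K2}), and then shows the resulting family satisfies the higher Leibniz rule on $Q_\tau(M)$ (Theorem~\ref{K}). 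This is precisely the ``self-contained alternative following Bland'' that you sketch at the end. Your primary route---decompose $\overline D$ into ordinary $\sigma_k$-derivations $d_k$, extend each via \cite{DA}, and reassemble---is conceptually cleaner and lets you recycle the single-derivation extension theorem instead of redoing the computations of Lemma~\ref{M}, but it carries two costs. First, it requires $\mathscr C$ to be $\mathfrak F$-linear with $\mathrm{char}\,\mathfrak F=0$, which is a hypothesis of the characterization theorems in Sections~\ref{section2}--\ref{section3} but is \emph{not} assumed in Section~\ref{section5}; the paper's direct construction works without it. Second, you need the \emph{converse} of the characterization (reassembling a higher $\Delta$-derivation from an arbitrary sequence of ordinary $\sigma_k$-derivations), whereas Theorem~\ref{RRRR} and its module analogue are stated and proved only in the forward direction; you would have to import the converse from \cite{MIR} and \cite{AK} or supply it yourself. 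With those two caveats your argument goes through, and the naturality step you flag---that the correspondence commutes with restriction along $\overline M\hookrightarrow Q$---is immediate because the recursive formulas $(k+1)D_{k+1}=\sum_{j} d_{j+1}\circ D_{k-j}$ are purely compositional.
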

	
	\section{Characterization of Higher Derivations on Monads}\label{section2}
	In this section, we present a characterization of higher derivations on a monad on a category  $\mathscr{C}$ in terms of ordinary derivations.
	We begin by recalling the definitions of monad, Eilenberg-Moore category of modules over a monad, and ordinary derivation on monad from \cite{DA}.
	
	\begin{defn}
		A monad on a category $\mathscr{C}$ is a triple $(T,\theta,\zeta)$ consisting of an endofunctor $T:\mathscr{C}\longrightarrow \mathscr{C}$ together with two natural transformations $\theta:T\circ T\longrightarrow T$ and $\zeta:1_\mathscr{C}\longrightarrow T$ such that for any object $M\in \mathscr{C}$, we have
		\begin{equation}\label{eq}
			\theta_{M} \circ \theta_{TM} = \theta_{M}\circ T\theta_{M} \quad \text{and} \quad \theta_{M}\circ T\zeta_{M} = 1_{TM} = \theta_{M} \circ \zeta_{TM}~,
		\end{equation}
		$i.e$, the following diagrams commute
		\begin{equation*}
			\begin{tikzcd}[row sep=3em, column sep=6em]
				T\circ T\circ TM\arrow[r, "\theta_{TM} ", shorten >=2pt, shorten <=2pt] \arrow[d, "T\theta_M"']  & T\circ TM \arrow[d, "\theta_M"] \\
				T\circ TM \arrow[r, "\theta_M"'] & TM
			\end{tikzcd}
			\hspace{1cm}and\hspace{1cm}
			\begin{tikzcd}[row sep=3em, column sep=6em]
				TM \arrow[r, "T\zeta_M", shorten >=2pt, shorten <=2pt] \arrow[d, "\zeta_{TM}"'] \arrow[rd, "1_{TM}", shorten >=4pt] & T\circ TM \arrow[d, "\theta_M"] \\
				T\circ TM \arrow[r, "\theta_M"'] & TM~.
			\end{tikzcd}
		\end{equation*}
		
		A monad morphism between two monads $(T,\theta,\zeta)$ and $(T',\theta',\zeta')$ is a natural transformation $\phi:T\longrightarrow T'$ such that the following equalities hold:
		\begin{equation*}
			\phi \circ \theta=\theta'\circ (\phi\ast \phi)\qquad \text{and}\qquad \phi\circ \zeta=\zeta'~.
		\end{equation*}
	\end{defn}
	\begin{defn}
		A module over a monad $(T,\theta,\zeta)$ is a pair $(M,f_M)$ consisting of an object $M\in \mathscr C$ and a morphism  $f_M:TM\longrightarrow M$ in $\mathscr C$ such that the following equalities hold:
		\begin{equation*}
			f_{M}\circ \theta_M=f_{M}\circ T(f_{M}) \qquad \text{and}\qquad f_{M}\circ \zeta_{M}=1_M~.
		\end{equation*}
		Furthermore, a $T$-module morphism between two $T$-modules $(M,f_M)$ and $(M',f_{M'})$ is a morphism $g:M\longrightarrow M'$ in $\mathscr C$ such that 
		 \begin{equation*}
			f_{M'}\circ Tg=g\circ f_M~.
		\end{equation*}
		
		We denote by $EM_T$ the Eilenberg-Moore category of modules over the monad $ T$. Given an object $M\in\mathscr C$, it is easy to see that $(TM, f_{TM}=\theta_{M}:T\circ TM\longrightarrow TM)\in EM_T$. Further, it is well known (see \cite{Mac}) that there exists a natural isomorphism
		\begin{equation}\label{adj}
			EM_T(TM,N)\cong \mathscr{C}(M,N)
		\end{equation}
		for $M\in \mathscr{C}$ and $N\in EM_T$.
		
	\end{defn} We now recall the following definition from \cite{DA}.
	\begin{defn}\cite[Definition 3.1]{DA} 
		Let $(T, \theta, \zeta)$ be a monad on $\mathscr C$. A natural transformation \(\delta : T \longrightarrow T\) is said to be an ordinary derivation on $T$ if the following diagram commutes
		\[
		\begin{tikzcd}[row sep=3em, column sep=6em]
			T\circ T \arrow[r, "1 \ast \delta + \delta \ast 1", shorten >=2pt, shorten <=2pt] \arrow[d, "\theta"'] & T\circ T \arrow[d, "\theta"] \\
			T \arrow[r, "\delta"'] & T~.
		\end{tikzcd}
		\]
		We continue to call the quadruple \((T, \theta, \zeta, \delta)\) a \emph{differential monad}.
	\end{defn}
	\noindent
	We now recall the definition of a higher derivation of order $n$ on the ring \( A \).\\
    \smallskip A higher derivation of order \( n \) on \( A \) is a collection of additive maps \( \Omega = \{\Omega_i : A \longrightarrow A\}_{i=0}^n \), where $\Omega_0$ is the identity mapping on $A$ and $\Omega_k$ satisfyies the following condition:
	\begin{equation}
		\begin{aligned}
			\Omega_k(ab)&=\Omega_k(a)\Omega_0(b)+\Omega_{k-1}(a)\Omega_1(b)+\ldots+\Omega(a)\Omega_k(b)\\&=\Omega_k(a)b+\Omega_{k-1}(a)\Omega_1(b)+\ldots+a\Omega_k(b)
		\end{aligned}
	\end{equation}
	for all $a,b\in A$ and for all $k\geq1$. We now show that the notion of higher derivations on a ring naturally extends to the higher derivations on a monad $T$. Moreover, we observe that the characterization of higher derivation over ring 
	continues to hold in the monadic setting (see \cite{MIR}).

	\begin{defn}\label{D2.4}
		Let $(T,\theta,\zeta)$ be a monad on a category $\mathscr C$. We say that a family $\Delta=\{\Delta_i: T\longrightarrow T\}_{i=0}^{n}$ of natural transformations where $\Delta_0$ is the identity natural transformation on $T$ is a higher derivation of order $n$ on $T$ if the following diagram commutes 
		\begin{equation}\label{eqE}
			\begin{tikzcd}[row sep=3.8em, column sep = 8em]
				T\circ T \arrow{r}{1\ast \Delta_i +\Delta_1\ast \Delta_{i-1}+\ldots+\Delta_i\ast 1}\arrow{d}{\theta}& T\circ T\arrow{d}{\theta} \\
				T \arrow{r}{\Delta_i}& T
			\end{tikzcd}
		\end{equation}
		for all $i=0,1,\ldots,n$.
		We call the quadruple $(T,\theta,\zeta,\Delta)$ a higher differential monad of order $n$. 
	\end{defn}
	\begin{rem}\label{RE}
		Let $(T,\theta,\zeta,\Delta)$ be a higher differential monad of order $n$ on 
		$\mathscr{C}$. 
		For $n=1$, $\Delta$ coincides with an ordinary derivation on $T$. 
		Moreover, for any $0 \leq k \leq n$, the collection 
		\[
		\bar{\Delta_k} = \{ \Delta_i : T \longrightarrow T \}_{i=0}^k
		\]
		forms a higher derivation of order $k$ on $T$. 
		Hence, every higher derivation of order $n$ gives rise to higher derivations 
		of order $k$ for all $1 \leq k\leq n$.
	\end{rem}
    \begin{defn}\label{D2.6}
         A family of natural transformations $\Delta=\{\Delta_i:T\longrightarrow T\}_{i=0}^\infty$ is a higher derivation on a monad $(T,\theta,\zeta)$  if it satisfies the following diagram
    \begin{equation}
        	\begin{tikzcd}[row sep=3.8em, column sep = 8em]
				T\circ T \arrow{r}{1\ast \Delta_i +\Delta_1\ast \Delta_{i-1}+\ldots+\Delta_i\ast 1}\arrow{d}{\theta}& T\circ T\arrow{d}{\theta} \\
				T \arrow{r}{\Delta_i}& T
			\end{tikzcd}
    \end{equation}
    for all $i\geq0$, where it is understood that $\Delta_0$ is the identity natural transformation on $T$. In this case we call $(T,\theta,\zeta,\Delta)$ a higher differential monad.
    \end{defn}
   We now give a characterization of higher derivation on monad.
	\begin{thm}\label{RRRR}
		Let  $\mathscr C$ be a $\mathfrak F$-linear category and $(T, \theta, \zeta,\Delta)$ be a higher differential monad on $\mathscr C$. Then there is a sequence $\{ \delta_n\}_{n=1}^\infty$ of ordinary derivations on $T$ such that $$(n+1)\Delta_{n+1}=\sum_{k=0}^n \delta_{k+1}\circ\Delta_{n-k}$$ for each $n\geq 0$.
	\end{thm}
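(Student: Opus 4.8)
The plan is to imitate Mirzavaziri's argument for algebras \cite{MIR}, replacing the Leibniz identities by the commuting square \eqref{eqE} and working throughout in the $\mathfrak F$-modules of natural transformations supplied by the $\mathfrak F$-linear structure on $\mathscr C$; the point is that the higher-derivation relations form a triangular system that can be inverted. Concretely, rather than guess the sequence I would \emph{define} it recursively: set $\delta_1:=\Delta_1$ and, for $n\ge 2$,
\[
\delta_n \ :=\ n\,\Delta_n\ -\ \sum_{k=1}^{n-1}\delta_k\circ\Delta_{n-k},
\]
which is a well-defined natural transformation $T\to T$ since sums, integer multiples and vertical composites of natural transformations are available. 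Because $\Delta_0$ is the identity, the $k=n$ summand of $\sum_{k=1}^{n}\delta_k\circ\Delta_{n-k}$ is $\delta_n$ itself, so the display rearranges to $n\,\Delta_n=\sum_{k=1}^{n}\delta_k\circ\Delta_{n-k}$ for every $n\ge 1$, which is exactly the asserted identity after reindexing $k\mapsto k+1$, $n\mapsto n+1$. Hence it only remains to prove that each $\delta_n$ so defined is an ordinary derivation on $T$, i.e.\ $\delta_n\circ\theta=\theta\circ(1\ast\delta_n+\delta_n\ast 1)$. (The recursion is just the coefficientwise form of solving $t\,\Phi'(t)=D(t)\cdot\Phi(t)$ for $D$, where $\Phi(t)=\sum_n\Delta_n t^n$, $D(t)=\sum_{n\ge 1}\delta_n t^n$ and coefficients are multiplied by composition — the ``logarithmic derivative'' heuristic — although one need not make this precise.)

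I would establish the claim by strong induction on $n$. The base case $n=1$ is Remark~\ref{RE}: the pair $\{\Delta_0,\Delta_1\}$ is a higher derivation of order $1$, which is the same as saying $\delta_1=\Delta_1$ is an ordinary derivation. For the inductive step, assume $\delta_1,\dots,\delta_{n-1}$ are ordinary derivations. Starting from $\delta_n\circ\theta=n\,\Delta_n\circ\theta-\sum_{k=1}^{n-1}\delta_k\circ\Delta_{n-k}\circ\theta$, I would substitute, for each $m\le n$, the higher Leibniz rule \eqref{eqE} in the form $\Delta_m\circ\theta=\theta\circ\sum_{i=0}^{m}\Delta_i\ast\Delta_{m-i}$, and then push each $\delta_k$ ($k\le n-1$) past the leading $\theta$ using the inductive hypothesis $\delta_k\circ\theta=\theta\circ(1\ast\delta_k+\delta_k\ast 1)$ together with the interchange law for horizontal and vertical composition of natural transformations, which yields
\[
\delta_k\circ\theta\circ(\Delta_i\ast\Delta_j)\ =\ \theta\circ\big((\delta_k\circ\Delta_i)\ast\Delta_j\ +\ \Delta_i\ast(\delta_k\circ\Delta_j)\big).
\]
Separating off the extreme terms of the form $1\ast(-)$ and $(-)\ast 1$ and invoking the very definition of $\delta_n$, these boundary contributions collapse to exactly $\theta\circ(\delta_n\ast 1+1\ast\delta_n)$; so $\delta_n$ is a derivation if and only if the remaining ``interior'' terms cancel.

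The heart of the argument — and the step I expect to be the main obstacle — is precisely this interior cancellation, which amounts to the combinatorial identity of natural transformations $T\circ T\to T\circ T$
\[
n\sum_{i=1}^{n-1}\Delta_i\ast\Delta_{n-i}\ =\ \sum_{k=1}^{n-1}\sum_{i=0}^{n-k-1}(\delta_k\circ\Delta_i)\ast\Delta_{n-k-i}\ +\ \sum_{k=1}^{n-1}\sum_{i=1}^{n-k}\Delta_i\ast(\delta_k\circ\Delta_{n-k-i}).
\]
I would prove it by interchanging the order of summation in each double sum and then applying, at the smaller index $p=n-i<n$, the defining relation $\sum_{k=1}^{p}\delta_k\circ\Delta_{p-k}=p\,\Delta_p$ (legitimate since $p<n$; no extra hypothesis needed here): after reindexing, the first double sum becomes $\sum_{i=1}^{n-1}i\,(\Delta_i\ast\Delta_{n-i})$ and the second becomes $\sum_{i=1}^{n-1}(n-i)\,(\Delta_i\ast\Delta_{n-i})$, whose sum is the left-hand side. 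Since all of these are identities between natural transformations valued in an $\mathfrak F$-linear category, every manipulation may be checked objectwise in the $\mathfrak F$-modules $\mathscr C\big(TTM,\,TM\big)$, so the only non-formal ingredient beyond $\mathfrak F$-bilinearity of $\circ$ and $\ast$ is the interchange law. This completes the induction and hence the proof.
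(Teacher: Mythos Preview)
Your proposal is correct and follows essentially the same route as the paper: define $\delta_n$ recursively by $\delta_n:=n\Delta_n-\sum_{k=1}^{n-1}\delta_k\circ\Delta_{n-k}$, then prove by strong induction that each $\delta_n$ is an ordinary derivation by expanding $\delta_n\circ\theta$ via the higher Leibniz rule \eqref{eqE}, pushing the $\delta_k$ past $\theta$ using the inductive hypothesis and interchange, and collapsing the sums with the lower-index relation $p\,\Delta_p=\sum_{k=1}^{p}\delta_k\circ\Delta_{p-k}$. The only cosmetic difference is the bookkeeping: the paper splits the computation into a ``left'' piece $K$ and a ``right'' piece $L$ by writing $n+1=k+(n+1-k)$ and shows separately that $K=\theta\circ(\delta_{n+1}\ast 1)$ and $L=\theta\circ(1\ast\delta_{n+1})$, whereas you split into ``boundary'' and ``interior'' terms and verify the interior cancellation directly; the reindexing and the use of the inductive identity are the same in both.
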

	\begin{proof}
		We prove this by induction on $ n$.  For $n=0$, consider $\delta_1=\Delta_1$. Since $\Delta_1$ is an ordinary derivation on $T$, the result is true for $ n=0. $ We now assume that the result is true for all $k\leq n$ and we prove it for $k=n+1.$ Consider 
		\begin{equation}
			\delta_{n+1}=(n+1)\Delta_{n+1}-\sum_{k=0}^{n-1} \delta_{k+1}\circ\Delta_{n-k}.\label{Q}
		\end{equation} It is enough to show that $\delta_{n+1}$ is an ordinary derivation on $T$ i.e, the
		following diagram commutes           
		\begin{equation*}
			\begin{tikzcd}[row sep=3.8em, column sep = 7em]
				T\circ T \arrow{r}{1\ast \delta_{n+1}+\delta_{n+1}\ast 1 }\arrow{d}{\theta} &T\circ T \arrow{d}{\theta}\\
				T \arrow{r}{\delta_{n+1}}& T.~
			\end{tikzcd}
		\end{equation*} 
		We see that
		\begin{align}
			\notag \delta_{n+1}\circ \theta 
			&=\Big[(n+1)\Delta_{n+1}-\sum_{k=0}^{n-1} \delta_{k+1}\circ\Delta_{n-k}\Big]\circ \theta\tag{by (\ref{Q})}\\\notag &=(n+1)\Delta_{n+1}\circ \theta-\sum_{k=0}^{n-1} \delta_{k+1}\circ\Delta_{n-k}\circ \theta \\\notag
			&=(n+1)\Big[\theta\circ \Big(\sum_{k=0}^{n+1}\Delta_k \ast \Delta_{n+1-k} \Big)\Big]-\Big[\sum_{k=0}^{n-1}\delta_{k+1}\circ\theta\circ\Big(\sum_{l=0}^{n-k}\Delta_l \ast \Delta_{n-k-l}\Big)\Big]\tag{by (\ref{eqE})}\\\notag
			&= \theta\circ\sum_{k=0}^{n+1}(k+n+1-k)\Delta_k \ast \Delta_{n+1-k}-\sum_{k=0}^{n-1}\sum_{l=0}^{n-k}\delta_{k+1}\circ\theta\circ(\Delta_l \ast \Delta_{n-k-l})\\
			&= \theta\circ\sum_{k=0}^{n+1}(k+n+1-k)\Delta_k \ast \Delta_{n+1-k}-\sum_{k=0}^{n-1}\sum_{l=0}^{n-k}\theta\circ(1\ast \delta_{k+1}+ \delta_{k+1}\ast1)\circ(\Delta_l \ast \Delta_{n-k-l}).\label{X}
		\end{align} 
		We now split equation (\ref{X}) into the following two parts and compute them independently. We define \begin{equation}
			K=\theta\circ\sum_{k=0}^{n+1}k(\Delta_k \ast \Delta_{(n+1-k)})-\sum_{k=0}^{n-1}\sum_{l=0}^{n-k}\theta\circ(\delta_{k+1}\ast1)\circ(\Delta_l \ast \Delta_{(n-k-l)})
		\end{equation}
		and \begin{equation}
			L=\theta\circ\sum_{k=0}^{n+1}(n+1-k)(\Delta_k \ast \Delta_{(n+1-k)})-\sum_{k=0}^{n-1}\sum_{l=0}^{n-k}\theta\circ(1\ast \delta_{k+1})\circ(\Delta_l \ast \Delta_{(n-k-l)}).
		\end{equation}
		Then, $\delta_{n+1}\circ \theta =K+L$. We now show that
		$$K=\theta \circ (\delta_{n+1}\ast1)\qquad \textup{and} \qquad L=\theta \circ (1 \ast \delta_{n+1}).$$ To compute $K$, we first note that $0\leq k+l\leq n$ and $k\neq n$. We now set $r=l+k$. Then we obtain
		\begin{align*}
			K&=\theta\circ\sum_{k=0}^{n+1}k(\Delta_k \ast \Delta_{n+1-k})-\sum_{r=0}^{n}\sum_{k=0,k\neq n}^{r}\theta\circ(\delta_{k+1}\ast1)\circ(\Delta_{r-k} \ast \Delta_{n-r})\\ &=\theta\circ\sum_{k=0}^{n+1}k(\Delta_k \ast \Delta_{n+1-k})-\sum_{r=0}^{n-1}\sum_{k=0,k\neq n}^{r}\theta\circ(\delta_{k+1}\ast1)\circ(\Delta_{r-k} \ast \Delta_{n-r})-\sum_{k=0}^{n-1}\theta\circ(\delta_{k+1}\ast 1)\circ\Delta_{n-k}~,
		\end{align*}
		i.e,\begin{align*}
			K+\sum_{k=0}^{n-1}\theta\circ(\delta_{k+1}\ast 1)\circ\Delta_{n-k}=\theta\circ\sum_{k=0}^{n+1}k(\Delta_k \ast \Delta_{n+1-k})-\sum_{r=0}^{n-1}\sum_{k=0,k\neq n}^{r}\theta\circ(\delta_{k+1}\ast1)\circ(\Delta_{r-k} \ast \Delta_{n-r})~.
		\end{align*}
		Now on setting $k=t+1$ in the first summand of $K$, we obtain
		\begin{align}
			K+\sum_{k=0}^{n-1}\theta\circ(\delta_{k+1}\ast 1)\circ\Delta_{n-k} &=\theta\circ\sum_{t=0}^{n}(t+1)(\Delta_{t+1} \ast \Delta_{n-t})-\sum_{r=0}^{n-1}\sum_{k=0,k\neq n}^{r}\theta\circ(\delta_{k+1}\ast1)\circ(\Delta_{r-k} \ast \Delta_{n-r})\notag\\&=\theta\circ\sum_{t=0}^{n-1}(t+1)(\Delta_{t+1} \ast \Delta_{n-t})-\sum_{r=0}^{n-1}\sum_{k=0,k\neq n}^{r}\theta\circ(\delta_{k+1}\ast1)\circ(\Delta_{r-k} \ast \Delta_{n-r})\notag\\
			&\qquad +(n+1)\theta\circ(\Delta_{n+1}\ast 1).
		\end{align}
		By interchanging law of composition, we note that
		$ (\delta_{k+1}\ast1)\circ(\Delta_{r-k} \ast \Delta_{n-r})= (\delta_{k+1}\circ\Delta_{r-k})\ast (1\circ \Delta_{n-r}).$ Further, as \(0 \leq r+1 \leq n\), by induction we have $
		(r+1)\Delta_{r+1} = \sum_{\substack{k=0 \\ k \neq n}}^{r} \delta_{k+1} \circ \Delta_{r-k}.$ Therefore, we obtain
		\begin{align*}
			K+\sum_{k=0}^{n-1}\theta\circ(\delta_{k+1}\ast 1)\circ\Delta_{n-k} &=\theta\circ\sum_{r=0}^{n-1}\Big[(r+1)\Delta_{r+1} -\sum_{k=0,k\neq n}^{r}\delta_{k+1}\circ\Delta_{r-k} \Big]\ast \Delta_{n-r}+(n+1)\theta\circ(\Delta_{n+1}\ast 1)\notag\\&=(n+1)\theta\circ(\Delta_{n+1}\ast 1).
		\end{align*}
		Hence, we get \begin{align*}
			K&=(n+1)\theta\circ(\Delta_{n+1}\ast 1)-\sum_{k=0}^{n-1}\theta\circ(\delta_{k+1}\ast 1)\circ\Delta_{n-k}\\&=(n+1)\theta\circ(\Delta_{n+1}\ast 1)-\sum_{k=0}^{n-1}\theta\circ(\delta_{k+1}\circ \Delta_{n-k})\ast 1\tag{by Interchanging law of composition}\\&=\theta\circ\Big[(n+1)(\Delta_{n+1}\ast 1)-\sum_{k=0}^{n-1}(\delta_{k+1}\circ \Delta_{n-k})\ast 1\Big]\\&=\theta\circ\Big[\Big((n+1)\Delta_{n+1}-\sum_{k=0}^{n-1}(\delta_{k+1}\circ \Delta_{n-k})\Big)\ast 1\Big]\\&=\theta \circ(\delta_{n+1}\ast1)~.
		\end{align*}
		Similarly, one can show that $L=\theta\circ(1\ast\delta_{n+1}).$
		Therefore $$\delta_{n+1}\circ \theta =K+L=\theta \circ(1\ast \delta_{n+1}+\delta_{n+1}\ast1).$$ i.e, $\delta_{n+1}$ is a derivation on $T$. This completes the proof.
	\end{proof} 
    \begin{cor}
        Let  $\mathscr C$ be a $\mathfrak F$-linear category and $(T, \theta, \zeta,\Delta)$ be a higher differential monad of order $n$ on $\mathscr C$. Then there is a sequence $\{ \delta_i\}_{i=1}^n$ of ordinary derivations on $T$ such that $$(i+1)\Delta_{i+1}=\sum_{k=0}^i \delta_{k+1}\circ\Delta_{i-k}$$ for each $0\leq i<n $.
    \end{cor}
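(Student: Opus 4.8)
The plan is to observe that this corollary is the finite-order shadow of Theorem~\ref{RRRR}, and that the induction proving the latter goes through verbatim once it is truncated at the right place. Concretely, I would construct the ordinary derivations $\delta_1,\dots,\delta_n$ recursively. Put $\delta_1=\Delta_1$; by Remark~\ref{RE} the natural transformation $\Delta_1$ is an ordinary derivation on $T$, and the claimed identity at $i=0$ reads $\Delta_1=\delta_1\circ\Delta_0=\delta_1$, which holds. Suppose $\delta_1,\dots,\delta_i$ have been produced as ordinary derivations on $T$ satisfying $(j+1)\Delta_{j+1}=\sum_{k=0}^{j}\delta_{k+1}\circ\Delta_{j-k}$ for all $j<i$, where $i<n$. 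Define
\[
\delta_{i+1}\;=\;(i+1)\,\Delta_{i+1}\;-\;\sum_{k=0}^{i-1}\delta_{k+1}\circ\Delta_{i-k}.
\]
Since the $k=i$ term of $\sum_{k=0}^{i}\delta_{k+1}\circ\Delta_{i-k}$ equals $\delta_{i+1}\circ\Delta_0=\delta_{i+1}$, the desired identity for index $i$ holds by construction, and the whole content is the verification that $\delta_{i+1}$ is an ordinary derivation on $T$.

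For that verification I would replay the computation of $\delta_{n+1}\circ\theta$ from the proof of Theorem~\ref{RRRR}, with the symbol $n$ there replaced by the current index $i$. That computation rewrites $\delta_{i+1}\circ\theta$ using diagram~(\ref{eqE}) applied to $\Delta_{i+1}$ and to $\Delta_{i-k}$ for $k=0,\dots,i-1$, then splits the result as $K+L$, re-indexes the double sums via $r=l+k$ and $k=t+1$, and invokes the already-established identities $(r+1)\Delta_{r+1}=\sum_{k}\delta_{k+1}\circ\Delta_{r-k}$ for $r\le i-1$ to collapse the inner brackets; it yields $K=\theta\circ(\delta_{i+1}\ast 1)$ and, symmetrically, $L=\theta\circ(1\ast\delta_{i+1})$, hence $\delta_{i+1}\circ\theta=\theta\circ(1\ast\delta_{i+1}+\delta_{i+1}\ast 1)$. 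Every step of that argument is purely formal manipulation of horizontal and vertical composites (and $\mathfrak F$-linear combinations) of natural transformations, so it transfers without change; the recursion then terminates once $i$ reaches $n-1$, producing the sequence $\{\delta_i\}_{i=1}^{n}$.

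The one point that genuinely needs checking — and the nearest thing to an obstacle here — is that the replayed computation never calls on an instance of diagram~(\ref{eqE}) at an index larger than $n$, since only the squares~(\ref{eqE}) for indices $0,\dots,n$ are available for a higher differential monad of order $n$. Tracking indices through the expressions $K$ and $L$, the largest index of a $\Delta$ that appears is $i+1$, and every other $\Delta$ carries a strictly smaller index; as $i<n$ we have $i+1\le n$, so all the required instances of~(\ref{eqE}) and all the required inductive identities lie within the given order-$n$ data. One could alternatively hope to extend $\Delta$ to a higher derivation of infinite order and then quote Theorem~\ref{RRRR} directly, but such an extension need not exist in general, so the truncated induction above is the safe route.
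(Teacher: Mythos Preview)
Your proposal is correct and matches the paper's (implicit) approach: the corollary is stated immediately after Theorem~\ref{RRRR} with no separate proof, so the intended argument is precisely the truncated induction you describe. Your extra care in verifying that no instance of diagram~(\ref{eqE}) beyond index $n$ is invoked is exactly the point that justifies passing from the infinite-order theorem to the finite-order corollary.
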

    \begin{eg}\label{2}
       Let $(T, \theta, \zeta,\delta)$ be a differential monad on a $\mathfrak F$-linear category $\mathscr{C}$. Then the collection $\Delta=\{\Delta_n=\frac{\delta^n}{n!}\}_{n=1}^\infty$ with $\Delta_0=Id_T$ is a higher derivation on $T$.
    \end{eg}
    \begin{eg}
     Let $(T, \theta, \zeta,\Delta)$ be a higher differential monad of order $n$ on $\mathscr{C}$. For any $1\leq k\leq n$, consider the following sequence of natural transformations
    
      \[ \Delta'_t = \begin{cases}
          Id_T & \text{if } t=0, \\
          0 & \text{if } k\nmid t, \\
          \Delta_s & \text{if } t=ks. 
       \end{cases}
    \]
    Then $\Delta'=\{\Delta_t\}_{t=0}^n$ is a higher derivation of order $n$ on $T$.
    \end{eg}
    \begin{eg}\label{3}
       Let $(T, \theta, \zeta)$ be a monad on $\mathscr{C}$ . We know from \cite[Examples 3.2]{DA} that for a fix natural transformation $\mu:1_\mathscr{C}\longrightarrow T$, $\delta=\theta\circ (\mu_T-T\mu)$ is an an ordinadry derivation on $T$. We now define
       
      \begin{equation}\label{BRU} \Delta_n = \begin{cases} 
      Id & \text{for} ~n=0,\\
          \delta &  \text{for} ~n=1,\\
          \Delta_n= \theta^{n-2}\circ(\mu^{n-1}\ast\delta)& \text{for all } n\geq 2,
       \end{cases}
\end{equation}
       where $\theta^{n-2}_M=\theta_M\circ\theta_{TM}\circ\ldots\theta_{T^{n-2}M}$ and $\mu^{n-1}=\underbrace{\mu\ast\mu\ast\ldots\ast\mu}_{n-1}$.\\
       
       Note that, for all $n\geq2$, we have
       \begin{align}
       (\Delta_n)_{M}
          &=\theta^{n-2}_M\circ(\mu^{n-1}\ast\delta)_M\notag\\
           &=\theta_M\circ\theta_{TM}\circ\ldots\circ \theta_{T^{n-2}M}\circ (\mu^{n-1}\ast\delta)_M\notag\\
           &=\theta_M\circ T\theta_{M}\circ\ldots \circ T\theta_{T^{n-3}M}\circ (\mu\ast(\mu^{n-2}\ast\delta))_M\tag{$\theta_M\circ\theta_{TM}=\theta_M\circ T\theta_{M}$}\\
           &=\theta_M\circ T(\theta_{M}\circ\ldots \circ\theta_{T^{n-3}M})\circ \mu_{T^{n-1}M}\circ(\mu^{n-2}\ast\delta)_M\tag{taking $\nu_M=\theta_{M}\circ\ldots \circ\theta_{T^{n-3}M}:T^{n-1}M\longrightarrow T^2M$}\\
           &=\theta_M\circ\mu_{TM}\circ(\theta_M\circ\theta_{TM}\circ\ldots\circ\theta_{T^{n-3}M})\circ(\mu^{n-2}\ast\delta)_M\tag{$T\nu_M\circ \mu_{T^{n-1}M}=\mu_{TM}\circ \nu_M$}\\
           &=\theta_M\circ\mu_{TM}\circ(\Delta_{n-1})_M,
        \end{align}
        where the last equality follows from the fact that $\mu$ is a natural transformation. Therefore, we can write $ \Delta_n= \theta\circ\mu_T\circ\Delta_{n-1}$.

        \smallskip
       We now show that $\Delta=\{\Delta_n:T\longrightarrow T\}_{n\geq 0}$  forms a higher derivation on $T$. For this, it is enough to show that the following diagram
       \begin{equation}\label{eqHUH}
			\begin{tikzcd}[row sep=3.8em, column sep = 8em]
				T\circ T \arrow{r}{1\ast \Delta_n +\Delta_1\ast \Delta_{n-1}+\ldots+\Delta_n\ast 1}\arrow{d}{\theta}& T\circ T\arrow{d}{\theta} \\
				T \arrow{r}{\Delta_n}& T
			\end{tikzcd}
		\end{equation} 
        commutes for all $n.$\\
        For $n=0$ the result is trivially true and for $n=1$ the result follows directly from \cite[Examples 3.2]{DA}. We now assume that the statement is true for $k\leq n-1$ and we will prove it for $k=n$. We first observe that any $1\leq t\leq n-1$, we have 
        \begin{align}
        \theta_M\circ\mu_{TM}\circ\theta_M\circ(\Delta_t\ast \Delta_{n-t})_M
        &=\theta_M\circ\mu_{TM}\circ\theta_M\circ(\Delta_t)_{TM}\circ T(\Delta_{n-t})_M\notag\\
        &=\theta_M\circ T\theta_{M}\circ \mu_{TTM}\circ(\Delta_t)_{TM}\circ T(\Delta_{n-t})_M\tag{$\mu_{TM}\circ\theta_M=T\theta_M\circ\mu_{TTM}$}\\
        &=\theta_M\circ \theta_{TM}\circ \mu_{TTM}\circ(\Delta_t)_{TM}\circ T(\Delta_{n-t})_M\tag{$\theta_M\circ T\theta_{M}=\theta_M\circ \theta_{TM}$}\\
        &=\theta_M\circ (\Delta_{t+1})_{TM}\circ T(\Delta_{n-t})_M\tag{by \eqref{BRU}}\\
        &=\theta_M\circ (\Delta_{t+1}\ast \Delta_{n-t})_M.\label{WE}
        \end{align}
        Also, we see that \begin{align}
            &\theta_M\circ T\theta_M\circ\mu_{TTM}\circ T(\Delta_{n-1})_M\notag\\
            &= \theta_M\circ T\theta_M\circ\mu_{TTM}\circ T(\Delta_{n-1})_M-\theta_M\circ \theta_{TM}\circ T\mu_{TM}\circ T(\Delta_{n-1})_M +\theta_M\circ \theta_{TM}\circ T\mu_{TM}\circ T(\Delta_{n-1})_M\notag\\
            &=\theta_M\circ \theta_{TM}\circ(\mu_{TTM}-T\mu_{TM})\circ T(\Delta_{n-1})_M+\theta_M\circ T\theta_{M}\circ T\mu_{TM}\circ T(\Delta_{n-1})_M\notag\\
            &=\theta_M \circ(\Delta_1)_{TM}\circ T(\Delta_{n-1})_M+\theta_M\circ T\theta_{M}\circ T\mu_{TM}\circ T(\Delta_{n-1})_M.\label{33}
        \end{align}
Finally for $k=n$, we get
        \begin{align*}
           (\Delta_n\circ \theta)_M 
            &=(\theta\circ\mu_T\circ\Delta_{n-1})_M\circ \theta_M\tag{by\eqref{BRU}}\\
            &=\theta_M\circ\mu_{TM}\circ(\Delta_{n-1})_M\circ\theta_M\\
            &=\theta_M\circ\mu_{TM}\circ\theta_M\circ(1\ast \Delta_{n-1} +\Delta_1\ast \Delta_{n-2}+\ldots+\Delta_{n-1}\ast 1)_M\tag{by induction}\\
            &=\theta_M\circ\mu_{TM}\circ\theta_M\circ(1\ast \Delta_{n-1})_M +\theta_M\circ\mu_{TM}\circ\theta_M\circ(\Delta_1\ast \Delta_{n-2})_M+\ldots+\theta_M\circ\mu_{TM}\circ\theta_M\circ(\Delta_{n-1}\ast 1)_M
            \end{align*}
            \begin{align*}
            &=\theta_M\circ\mu_{TM}\circ\theta_M\circ(1\ast \Delta_{n-1})_M +\theta_M\circ(\Delta_2\ast \Delta_{n-2})_M+\ldots+\theta_M\circ(\Delta_{n}\ast 1)_M\tag{by \eqref{WE}}\\
           &=\theta_M\circ \mu_{TM}\circ\theta_M  \circ T(\Delta_{n-1})_M +\theta_M\circ(\Delta_2\ast \Delta_{n-2})_M+\ldots+\theta_M\circ(\Delta_{n}\ast 1)_M\\
           &=\theta_M\circ T\theta_M\circ\mu_{TTM}\circ T(\Delta_{n-1})_M +\theta_M\circ(\Delta_2\ast \Delta_{n-2})_M+\ldots+\theta_M\circ(\Delta_{n}\ast 1)_M\tag{$\mu_{TM}\circ\theta_M=T\theta_M\circ\mu_{TTM}$}\\
           &=\theta_M\circ (\Delta_1)_{TM}\circ T(\Delta_{n-1})_M+\theta_M\circ \theta_{TM}\circ T\mu_{TM}\circ T(\Delta_{n-1})_M +\theta_M\circ(\Delta_2\ast \Delta_{n-2})_M+\ldots+\theta_M\circ(\Delta_{n}\ast 1)_M\tag{by\ref{33}}\\
           &=\theta_M\circ (\Delta_1\ast\Delta_{n-1})_M+\theta_M\circ T\theta_{M}\circ T\mu_{TM}\circ T(\Delta_{n-1})_M +\theta_M\circ(\Delta_2\ast \Delta_{n-2})_M+\ldots+\theta_M\circ(\Delta_{n}\ast 1)_M\tag{$\theta_M\circ T\theta_{M}=\theta_M\circ \theta_{TM}$}\\
           &=\theta_M\circ (\Delta_1\ast\Delta_{n-1})_M+\theta_M\circ T(\Delta_{n})_M +\theta_M\circ(\Delta_2\ast \Delta_{n-2})_M+\ldots+\theta_M\circ(\Delta_{n}\ast 1)_M\tag{by \eqref{BRU}}\\
           &=\theta_M\circ (\Delta_1\ast\Delta_{n-1})_M+\theta_M\circ (1\ast\Delta_{n})_M+\theta_M\circ(\Delta_2\ast \Delta_{n-2})_M+\ldots+\theta_M\circ(\Delta_{n}\ast 1)_M\\
            &=\theta_M\circ (1\ast \Delta_{n} +\Delta_1\ast \Delta_{n-1}+\ldots+\Delta_{n}\ast 1)_M.
        \end{align*}
        This completes the proof.
    \end{eg}
	\section{Characterization of higher derivations on a module over a monad}\label{section3}
	In this section, we introduce the notion of higher derivation on modules over a monad in the Eilenberg-Moore category and establish a similar characterization of higher derivation on modules over a monad in terms of ordinary derivations on module over monad as discussed in \cite[Theorem 5.5]{AK}. We start by recalling the definition of ordinary derivation on a module over a monad from \cite{DA}.
	\begin{defn}\cite[Definition 3.5]{DA}
		Let $(T,\theta,\zeta,\delta)$ be a differential monad on a category $\mathscr{C}$ and $(M,f_M)$ be an object in $EM_T$. Then,  a morphism $ d:M\longrightarrow M$ in $\mathscr{C}$ is called  a $\delta$-derivation on $M$ if the following diagram commutes:
        \begin{equation}\label{KKKK}
        \begin{tikzcd}[row sep=3em, column sep=5em]
            TM \arrow[r, "Td+\delta_M"] \arrow[d, "f_M"'] 
            & TM \arrow[d, "f_M"] \\
            M \arrow[r, "d"'] 
            & M
        \end{tikzcd}
        \end{equation}
	\end{defn}
	Recall that given a higher derivation $\Omega$ of order $n$ on a ring $A$, a higher $\Omega$-derivation of order $n$ on an $A$-module $M$ is a family of morphisms $\Psi:=\{\Psi_k:M\longrightarrow M\}_{k=0}^n$ , where $\Psi_0$ is the identity mapping on $M$ and $\Psi_k$ satisfies the following condition:
	\begin{equation}
		\begin{aligned}
			\Psi_k(am)&=\Omega_0(a)\Psi_k(m)+\Omega_1(a)\Psi_{k-1}(m)+\ldots+\Omega_k(a)\Psi_0(m)\\&=a\Psi_k(m)+\Omega_1(a)\Psi_{k-1}(m)+\ldots+\Omega_k(a)m
		\end{aligned}
	\end{equation}
	for all $a\in A$, $m\in M$ and for all $k\geq 0$ (see \cite{PPB}). We now show that the notion of higher derivations on $A$-modules naturally extend to the higher derivations on modules over a monad.
	\begin{defn}\label{DM}
		Let $(T, \theta, \zeta,\Delta)$ be a higher differential monad of order $n$ on $\mathscr C$ and let $(M,f_M)$ be an object in $EM_{T}$. Then, we say that a family $D=\{D_i: M\longrightarrow M\}_{i=0}^{n}$ of morphisms on $M$ in $\mathscr C$ is a higher $\Delta$-derivation of order $n$ on $M$ if the following diagram commutes 
		\begin{equation}\label{eq33}
			\begin{tikzcd}[row sep=4.8em, column sep = 16em]
				TM \arrow{r}{TD_i+TD_{i-1}\circ (\Delta_{1})_{M}+\ldots+TD_1\circ (\Delta_{i-1})_{M}+(\Delta_i)_{M}}\arrow{d}{f_M} &TM \arrow{d}{f_M}\\
				M \arrow{r}{D_i}& M
			\end{tikzcd}
		\end{equation} 
		for all $i=0,1,\ldots,n$, where it is understood that $D_0=Id_M$, the identity morphism on $M$ in $\mathscr{C}$ .
	\end{defn}
	\begin{rem}\label{51}
		Let $(M,f_M)$ be a module over a higher differential monad $(T,\theta,\zeta,\Delta)$ of order $n$, and  $D = \{ D_i : M \longrightarrow M \}_{i=0}^n$ be a higher $\Delta$-derivation . For $n=1$, $D$ is an ordinary derivation on $M$ relative to $\Delta_1$. 
		Furthermore, for any $0 < k \leq n$, the collection
		\[
		\Bar{D_k} = \{ D_i : M \longrightarrow M \}_{i=0}^k
		\]
		defines a higher derivation of order $k$ on $M$ with respect to $\Bar{\Delta_k}$ as defined in Remark \ref{RE}. 
		Thus, every higher derivation of order $n$ on $M$ defines higher derivations 
		of order $k$ for all $0<k \leq n$.
	\end{rem}
    \smallskip 
     \begin{defn}
     Let $(M,f_M)$ be a module over a higher differential monad $(T,\theta,\zeta,\Delta)$. Then we say that a family of morphisms $D=\{D_i: M\longrightarrow M\}_{i=0}^{\infty}$ in $\mathscr C$ is a higher $\Delta$-derivation on $M$ if the following diagram  
		\begin{equation}\label{eq3.1}
			\begin{tikzcd}[row sep=4.8em, column sep = 16em]
				TM \arrow{r}{TD_i+TD_{i-1}\circ {(\Delta_{1})}_{M}+\ldots+TD_1\circ (\Delta_{{i-1}})_{M}+(\Delta_i)_{M}}\arrow{d}{f_M} &TM \arrow{d}{f_M}\\
				M \arrow{r}{D_i}& M
			\end{tikzcd}
		\end{equation} 
	commutes for all $i\geq 0$ with the understanding that $D_0$ is the identity morphism on $M$ in $\mathscr{C}$.
    \end{defn}
        
We now give a characterization of higher derivation on a module over monad.
	\begin{thm}
		Let  $\mathscr C$ be a $\mathfrak F$-linear category and $(T, \theta, \zeta,\Delta)$ be a higher differential monad on $\mathscr C$, where $\Delta$ can be determined by the family $\delta=\{\delta_i:T\longrightarrow T\}_{i=1}^\infty$ of ordinary derivation on $T$. Let $(M,f_M)$ be an object in $EM_T$ that is equipped with a higher $\Delta$-derivation $D$. Then there is a sequence $\{ d_i:M\longrightarrow M\}_{i=0}^\infty$ of ordinary derivations on $M$  such that $$(n+1)D_{n+1}=\sum_{k=0}^n d_{k+1}\circ D_{n-k}$$ for each $n\geq 0$.
	\end{thm}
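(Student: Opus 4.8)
The plan is to follow the inductive argument of Theorem~\ref{RRRR} on the module side, with $f_M$ playing the role of $\theta$ and the diagram~\eqref{eq3.1} the role of~\eqref{eqE}; concretely, \eqref{eq3.1} says $D_i\circ f_M=f_M\circ\sum_{j=0}^{i}TD_j\circ(\Delta_{i-j})_M$ (with $D_0=\mathrm{Id}_M$ and $\Delta_0=\mathrm{Id}_T$). Here "ordinary derivation on $M$" means, for each $i\ge 1$, a $\delta_i$-derivation on $M$ in the sense of~\eqref{KKKK}, where $\delta=\{\delta_i\}_{i\ge 1}$ is the family of ordinary derivations on $T$ furnished by Theorem~\ref{RRRR}. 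The induction is on $n$. For $n=0$ set $d_1:=D_1$: by Remark~\ref{51} the pair $\{D_0,D_1\}$ is a higher derivation of order $1$, so $D_1$ is an ordinary $\Delta_1$-derivation on $M$, and since $\Delta_1=\delta_1$ this gives $D_1=d_1\circ D_0$.

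For the inductive step, assume the relations $(m+1)D_{m+1}=\sum_{k=0}^{m}d_{k+1}\circ D_{m-k}$ hold for all $0\le m\le n-1$, where $d_1,\ldots,d_n$ are ordinary derivations on $M$ (each $d_i$ relative to $\delta_i$). Following the pattern of~\eqref{Q}, put
\[
d_{n+1}:=(n+1)D_{n+1}-\sum_{k=0}^{n-1}d_{k+1}\circ D_{n-k}.
\]
The order-$n$ relation is then immediate (using $D_0=\mathrm{Id}_M$), so it remains to prove that $d_{n+1}$ is a $\delta_{n+1}$-derivation on $M$, i.e. $d_{n+1}\circ f_M=f_M\circ\bigl(Td_{n+1}+(\delta_{n+1})_M\bigr)$. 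I would compose with $f_M$ on the source, rewrite $D_{n+1}\circ f_M$ and each $D_{n-k}\circ f_M$ by~\eqref{eq3.1}, and rewrite each $d_{k+1}\circ f_M$ ($k\le n-1$) by the inductive hypothesis as $f_M\circ\bigl(Td_{k+1}+(\delta_{k+1})_M\bigr)$. After cancelling $f_M$ on the left and splitting the coefficient $(n+1)=j+(n+1-j)$ in front of $\sum_{j=0}^{n+1}TD_j\circ(\Delta_{n+1-j})_M$, the resulting expression decomposes as $K+L$, where
\[
K=\sum_{j=0}^{n+1}j\,TD_j\circ(\Delta_{n+1-j})_M-\sum_{k=0}^{n-1}\sum_{l=0}^{n-k}T(d_{k+1}\circ D_l)\circ(\Delta_{n-k-l})_M
\]
gathers the $d$-terms (using $Tg\circ Tf=T(g\circ f)$), and $L$ gathers the $\delta$-terms (using naturality of $\delta_{k+1}$ to move $(\delta_{k+1})_M$ past $TD_l$, together with the composition law for natural transformations). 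The target is $K=Td_{n+1}$ and $L=(\delta_{n+1})_M$, which gives the desired diagram.

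To evaluate $K$, note that $jD_j=\sum_{i=0}^{j-1}d_{i+1}\circ D_{j-1-i}$ holds for every $1\le j\le n+1$: for $j\le n$ by the inductive hypothesis, and for $j=n+1$ by the very definition of $d_{n+1}$. Substituting this under $T$ and reindexing by $(k,l)=(i,\,j-1-i)$ turns the first sum in $K$ into a sum over all $k,l\ge 0$ with $k+l\le n$, which exceeds the subtracted double sum (restricted to $k\le n-1$) by exactly the single term $k=n$, $l=0$, equal to $T(d_{n+1}\circ D_0)\circ(\Delta_0)_M=Td_{n+1}$. Symmetrically, for $L$ one substitutes the monad-level identity $(n+1-j)\Delta_{n+1-j}=\sum_{i=0}^{n-j}\delta_{i+1}\circ\Delta_{n-j-i}$ from Theorem~\ref{RRRR} and reindexes, the leftover boundary term being $TD_0\circ(\delta_{n+1}\circ\Delta_0)_M=(\delta_{n+1})_M$. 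Combining, $d_{n+1}\circ f_M=f_M\circ(K+L)=f_M\circ\bigl(Td_{n+1}+(\delta_{n+1})_M\bigr)$, closing the induction. The computation is routine; the real work — exactly as in Theorem~\ref{RRRR} — is the double-sum bookkeeping needed to isolate the boundary terms ($k=n$), and the one new ingredient beyond that proof is the systematic use of naturality of the $\delta_i$ (and functoriality of $T$) to slide the transformations $(\delta_{k+1})_M$ and the functor $T$ through composites of the $D$'s.
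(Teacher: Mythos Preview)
Your proposal is correct and follows essentially the same inductive strategy as the paper's proof: define $d_{n+1}$ by the displayed formula, expand $d_{n+1}\circ f_M$ via \eqref{eq3.1} and the inductive $\delta_k$-derivation relations, and split into a ``$Td$-part'' and a ``$\delta$-part'' (the paper's $L_1,L_2$), each reducing to the desired boundary term after reindexing. Your bookkeeping is in fact slightly cleaner than the paper's (you use that $jD_j=\sum_{i=0}^{j-1}d_{i+1}\circ D_{j-1-i}$ holds even for $j=n+1$ by the very definition of $d_{n+1}$, which collapses the double sum immediately); just replace the phrase ``cancelling $f_M$ on the left'' by ``factoring $f_M$ out on the left,'' since $f_M$ is not assumed monic and you never actually cancel it.
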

	\begin{proof}
		We prove this result by induction on $n$. Since $D_1$ is an ordinary $\delta_1$-derivation on $M$, we have \[
        D_1\circ f_M=f_M\circ (TD_1+\delta_{1_M})=f_M\circ( Td_1+\delta_{1_M}).
        \] Set $d_1=D_1$. Then $d_1$ is a $\delta_1$-derivation. Hence the result is true for $n=0$. We now assume that the result is true for all $k\leq n$ and we prove it for $n+1$. We set $$d_{n+1}=(n+1)D_{n+1}-\sum_{k=0}^{n-1}d_{k+1}\circ D_{n-k}.$$ We proceed in a manner similar to the proof of Proposition~\ref{RRRR} to show that $d_{n+1}$ is an ordinary $\delta_{n+1}$- derivation on $M$ i.e, the following diagram commutes
        \begin{equation}\label{KKK}
        \begin{tikzcd}[row sep=3em, column sep=5.5em]
           TM \arrow[r, "Td_{n+1}+(\delta_{n+1})_{M}"] \arrow[d, "f_{M}"'] 
           & TM \arrow[d, "f_{M}"] \\
           M \arrow[r, "d_{n+1}"'] 
           & M
        \end{tikzcd}
        \end{equation}
		We see that \begin{align}
			d_{n+1}\circ f_M
			&=\Big((n+1)D_{n+1}-\sum_{k=0}^{n-1}d_{k+1}\circ D_{n-k}\Big)\circ f_M\notag\\
			&=(n+1)D_{n+1}\circ f_M-\sum_{k=0}^{n-1}d_{k+1}\circ D_{n-k}\circ f_M\notag
            \end{align}
            \begin{align}
			&=(n+1)f_M\circ \sum_{k=0}^{n+1}TD_k\circ {(\Delta_{n+1-k})_M}-\sum_{k=0}^{n-1}d_{k+1}\circ f_M \circ \sum_{t=0}^{n-k}TD_t\circ (\Delta_{n-k-t})_M\tag{by Remark \ref{51}}\\
			&=(n+1)f_M\circ \sum_{k=0}^{n+1}TD_k\circ (\Delta_{n+1-k})_M-\sum_{k=0}^{n-1}\sum_{t=0}^{n-k}d_{k+1} \circ f_M \circ TD_t\circ (\Delta_{n-k-t})_M\notag\\
			&=(k+n+1-k)f_M\circ \sum_{k=0}^{n+1}TD_k\circ (\Delta_{n+1-k})_M-
			\sum_{k=0}^{n-1}\sum_{t=0}^{n-k}f_M\circ \Big(Td_{k+1}+\delta_{k+1}\Big)\circ TD_t\circ (\Delta_{n-k-t})_M.\label{eq3.4*}
		\end{align}
		We split \eqref{eq3.4*} in the following two parts:
		\begin{equation}
			L_1=kf_M\circ \sum_{k=0}^{n+1}TD_k\circ (\Delta_{n+1-k)})_M-\sum_{k=0}^{n-1}\sum_{t=0}^{n-k}f_M\circ Td_{k+1}\circ TD_t\circ (\Delta_{n-k-t})_M 
		\end{equation}
		and 
		\begin{equation}
			L_2=(n+1-k)f_M\circ \sum_{k=0}^{n+1}TD_k\circ (\Delta_{n+1-k})_M-\sum_{k=0}^{n-1}\sum_{t=0}^{n-k}f_M\circ \delta_{k+1}\circ TD_t\circ (\Delta_{n-k-t})_M.
		\end{equation}
		Note that $0\leq t+k\leq n$ and $k\neq n$. On setting $r=t+k$ in the 2nd summand of $L_1 $, we obtain
		\begin{align}
			L_1&=f_M\circ \sum_{k=0}^{n+1}kTD_k\circ (\Delta_{n+1-k})_M-\sum_{r=0}^{n}\sum_{k=0}^{r}f_M\circ Td_{k+1}\circ TD_{r-k}\circ (\Delta_{n-r})_M\notag\\
			&=(n+1)f_M\circ TD_{n+1}+f_M\circ \sum_{k=0}^{n}kTD_k\circ (\Delta_{n+1-k})_M-\sum_{k=0}^{n-1}f_M\circ Td_{k+1}\circ TD_{n-k}-\sum_{r=0}^{n-1}\sum_{k=0}^{r}f_M\circ Td_{k+1}\circ TD_{r-k}\circ (\Delta_{n-r})_M.
		\end{align}
	Further, taking $k=l+1$ in the 2nd summand of $L_1$, we obtain
		
		\smallskip
		\begin{align*}
			L_1&=(n+1)f_M\circ TD_{n+1}+f_M\circ \sum_{l=0}^{n-1}(l+1)TD_{l+1}\circ (\Delta_{n-l})_M-\sum_{k=0}^{n-1}f_M\circ Td_{k+1}\circ TD_{n-k}-\sum_{r=0}^{n-1}\sum_{k=0}^{r}f_M\circ Td_{k+1}\circ TD_{r-k}\circ (\Delta_{n-r})_M\notag\\
			&=f_M\Big((n+1)TD_{n+1}-\sum_{k=0}^{n-1}Td_{k+1}\circ TD_{n-k}\Big)+f_M\circ \sum_{l=0}^{n-1}(l+1)TD_{l+1}\circ (\Delta_{n-l})_M-\sum_{r=0}^{n-1}\sum_{k=0}^{r}f_M\circ Td_{k+1}\circ TD_{r-k}\circ (\Delta_{n-r})_M\\
			&=f_M\Big((n+1)TD_{n+1}-\sum_{k=0}^{n-1}Td_{k+1}\circ TD_{n-k}\Big)+f_M\circ \sum_{l=0}^{n-1}\Big((l+1)TD_{l+1}-\sum_{k=0}^{l}Td_{k+1}\circ TD_{l-k}\Big)\circ (\Delta_{n-l})_M\notag\\
			&=f_M\circ Td_{n+1}.\notag
		\end{align*}
		In a similar way, one obtain, 
		$L_2= f_M\circ (\delta_{k+1})_M.$
		Hence, we obtain $d_{n+1}\circ f_M=f_M\circ (Td_{n+1}+(\delta_{k+1})_M)$.
	\end{proof}
	\begin{cor}
	    Let  $\mathscr C$ be a $\mathfrak F$-linear category and $(T, \theta, \zeta,\Delta)$ be a higher differential monad of order $n$ on $\mathscr C$, where $\Delta$ can be determined by the family of ordinary derivation $\delta=\{\delta_i:T\longrightarrow T\}_{i=0}^n$ on $T$. Let $(M,f_M)$ be an object in $EM_T$ that is equipped with a higher $\Delta$-derivation $D$ of order $n$ on $M$. Then there is a sequence $\{ d_i:M\longrightarrow M\}_{i=0}^n$ of ordinary derivations on $M$ with $d_0=Id_M$ such that $$(i+1)D_{i+1}=\sum_{k=0}^i D_{k+1}\circ d_{i-k}$$ for each $0\leq i< n$.
	\end{cor}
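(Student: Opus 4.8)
The plan is to read this corollary off from the argument already carried out in the preceding Proposition: the statement is precisely its order-$n$ analogue, and a proof is obtained by running that same induction through only the finitely many steps $0 \le i < n$. The point that makes the truncation legitimate is that the order-$n$ data restricts to all lower orders. Indeed, by Remark \ref{RE} the quadruple $(T,\theta,\zeta,\bar{\Delta_k})$ is a higher differential monad of order $k$ for every $0 \le k \le n$, and by Remark \ref{51} the truncated family $\bar{D_k} = \{D_i\}_{i=0}^k$ is a higher $\bar{\Delta_k}$-derivation of order $k$ on $M$; moreover the sequence $\delta = \{\delta_i\}_{i=1}^n$ of ordinary derivations on $T$ determining $\{\Delta_i\}_{i=1}^n$ is exactly the one supplied by the Corollary to Proposition \ref{RRRR}. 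Consequently every commuting square of the form \eqref{eq33} indexed by some $i \le n$ is available purely from the order-$n$ hypotheses, which is all the computation needs.

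With this in place I would argue by induction on $i$. For the base case, set $d_1 := D_1$; by Remark \ref{51} with $k=1$ this is an ordinary $\delta_1$-derivation on $M$, so the asserted identity holds at $i=0$ (with $d_0 := Id_M$). For the inductive step, fix $i$ with $1 \le i \le n-1$ and suppose $d_1,\dots,d_i$ have been constructed, each $d_j$ an ordinary $\delta_j$-derivation on $M$, with the asserted identity valid up to index $i-1$. Define $d_{i+1} := (i+1)D_{i+1} - \sum_{k=0}^{i-1} d_{k+1}\circ D_{i-k}$, which rearranges to the recursion in the statement at index $i$; it remains only to verify that $d_{i+1}$ is an ordinary $\delta_{i+1}$-derivation on $M$, that is, that the square \eqref{KKKK} commutes for $d_{i+1}$ and $\delta_{i+1}$. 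This is done exactly as in the proof of the preceding Proposition: expand $d_{i+1}\circ f_M$ using the definition of $d_{i+1}$, push $f_M$ through each $D_k\circ f_M$ and each $d_{k+1}\circ f_M$ by means of the squares \eqref{eq33} (valid by Remark \ref{51}) and the derivation property of the $d_{k+1}$ and $\delta_{k+1}$, split the outcome as $L_1 + L_2$, perform the same reindexings ($r = t+k$, then $k = l+1$), and invoke the induction hypothesis; this yields $d_{i+1}\circ f_M = f_M\circ(Td_{i+1} + (\delta_{i+1})_M)$, as required. After $n-1$ steps one obtains the family $d_0 := Id_M, d_1, \dots, d_n$ of ordinary derivations with the claimed property.

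I do not expect any genuine obstacle, since all the substance already resides in the preceding Proposition; the only matters needing attention are organizational. First, one must confirm that the recursion never reaches an index exceeding $n$: the identity at index $i$ involves $d_{i+1}$ with $i \le n-1$, so the procedure produces exactly $d_1,\dots,d_n$, matching the range $0 \le i < n$. Second, the hypothesis that ``$\Delta$ is determined by $\delta = \{\delta_i\}_{i=0}^n$'' imposes nothing beyond the order-$n$ differential monad structure: one simply takes the sequence furnished by the Corollary to Proposition \ref{RRRR}, with $\delta_0 := Id_T$ adjoined. Third, because the corollary does not claim uniqueness of the $d_i$, the bare existence argument above suffices and no separate rigidity statement is required.
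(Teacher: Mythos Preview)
Your proposal is correct and takes essentially the same approach as the paper: the Corollary is stated without proof, being the immediate finite-order truncation of the preceding Proposition, and your argument (invoking Remarks \ref{RE} and \ref{51} to legitimize the truncation, then rerunning the same induction for $0 \le i < n$) is exactly that. One small point: the displayed recursion in the Corollary has the factors transposed relative to the Proposition ($D_{k+1}\circ d_{i-k}$ versus $d_{k+1}\circ D_{i-k}$); your definition of $d_{i+1}$ follows the Proposition's form, which is the intended one, so your claim that it ``rearranges to the recursion in the statement'' should be read modulo this evident misprint.
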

	\begin{eg}
	    Let $(T,\theta,\zeta,\Delta)$ be a higher differential monad as defined in Example \ref{2}. If $(M,f_M)\in EM_T$ and $d:M\longrightarrow M$ is an ordinary $\delta$-derivation on $M$, then $D=\{D_n=\frac{d^n}{n!}\}_{n\geq 0}$ is a higher $\Delta$-derivation on $M$.
	\end{eg}
    \begin{eg}
        Consider the higher differential monad $(T,\theta,\zeta,\Delta)$ as defined in Example \ref{3}. From  \cite[Example 3.6]{DA}, we know that for a module $(M,f_M)\in EM_T$,  $d=f_M\circ\mu_M$ is an ordinary $\delta$-derivation. We define $D=\{D_n:M\longrightarrow M\}_{n\geq 0}$ by 
\begin{equation}\label{BRUU} D_n = \begin{cases} 
      Id & \text{for} ~n=0,\\
          d &  \text{for} ~n=1,\\
          d^n& \text{for all } n\geq 2.
       \end{cases}
\end{equation}

        We now show that $D=\{D_n\}_{n\geq 0}$ is a higher $\Delta$-derivation on $M$. 
        For this, it is enough to show the following diagram
        \begin{equation}\label{4}
			\begin{tikzcd}[row sep=4.8em, column sep = 16em]
				TM \arrow{r}{TD_i+TD_{i-1}\circ (\Delta_{1})_{M}+\ldots+TD_1\circ (\Delta_{i-1})_{M}+(\Delta_i)_{M}}\arrow{d}{f_M} &TM \arrow{d}{f_M}\\
				M \arrow{r}{D_i}& M
			\end{tikzcd}
		\end{equation} 
        commutes for all $i\geq 1.$\\
        For $n=0$, the result is trivial and for $n=1$, the result directly follows from \cite[Example 3.6]{DA}. We now assume that the statement is true for all $i< n$. Then for $i=n$ we have 
        \begin{align}
            D_n\circ f_M&=D_1^n\circ f_M\notag\\
                        &=D_1\circ D_{n-1}\circ f_M\tag{$D_i=D_1^i$}\\
                       &=D_1\circ f_M\circ(TD_{n-1}+TD_{n-2}\circ(\Delta_1)_M+\ldots+(\Delta_{n-1})_M) \tag{by induction}\\
                       &=f_M\circ(TD_1+(\Delta_1)_M)\circ(TD_{n-1}+TD_{n-2}\circ(\Delta_1)_M+\ldots+(\Delta_{n-1})_M) \notag\\
                       &=f_M\circ TD_1\circ(TD_{n-1}+TD_{n-2}\circ(\Delta_1)_M+\ldots+(\Delta_{n-1})_M)+f_M\circ (\Delta_1)_M\circ (TD_{n-1}+TD_{n-2}\circ(\Delta_1)_M+\ldots+(\Delta_{n-1})_M)\notag\\
                       &=f_M\circ (TD_{n}+TD_{n-1}\circ(\Delta_1)_M+\ldots+TD_1\circ(\Delta_{n-1})_M)+f_M\circ (\Delta_1)_M\circ (TD_{n-1}+TD_{n-2}\circ(\Delta_1)_M+\ldots+(\Delta_{n-1})_M).\label{44}
        \end{align}
        Note that for all $1\leq k\leq n$ we have
        \begin{align}
            f_M\circ (\Delta_1)_M\circ TD_{n-k}\circ(\Delta_{k-1})_M
            &=f_M\circ\theta_M\circ(\mu_T-T\mu)_M\circ TD_{n-k}\circ(\Delta_{k-1})_M\notag\\
            &=f_M\circ\theta_M\circ \mu_{TM}\circ TD_{n-k}\circ(\Delta_{k-1})_M-f_M\circ Tf_M\circ T\mu_M\circ TD_{n-k}\circ(\Delta_{k-1})_M\tag{$f_M\circ\theta_M=f_M\circ Tf_M$}\\
            &=f_M\circ\theta_M\circ \mu_{TM}\circ(\Delta_{k-1})_M\circ TD_{n-k}-f_M\circ TD_1\circ TD_{n-k}\circ(\Delta_{k-1})_M\tag{$TD_{n-k}\circ(\Delta_{k-1})_M=(\Delta_{k-1})_M\circ TD_{n-k}$}\\
            &=f_M\circ(\Delta_{k})_M\circ TD_{n-k}-f_M\circ TD_{n-k+1}\circ(\Delta_{k-1})_M\tag{$\theta_M\circ \mu_{TM}\circ(\Delta_{k-1})=(\Delta_{k})_M$}\\
            &=f_M\circ TD_{n-k}\circ(\Delta_{k})_M-f_M\circ TD_{n-k+1}\circ(\Delta_{k-1})_M. \label{61}
        \end{align}
        Therefore the second summand of the equation (\ref{44}) gives 
        \begin{align}
            & f_M\circ (\Delta_1)_M\circ (TD_{n-1}+TD_{n-2}\circ(\Delta_1)_M+\ldots+(\Delta_{n-1})_M)\notag\\
            &=f_M\circ (\Delta_1)_M \circ TD_{n-1}+f_M\circ (\Delta_1)_M\circ TD_{n-2}\circ(\Delta_1)_M+\ldots+f_M\circ (\Delta_1)_M\circ(\Delta_{n-1})_M\notag\\
            &=f_M \circ TD_{n-1}\circ (\Delta_1)_M+f_M\circ TD_{n-2}\circ(\Delta_{2})_M-f_M\circ TD_{n-1}\circ (\Delta_1)_M\tag{by \eqref{61}}\\
            &\qquad+f_M\circ TD_{n-3}\circ(\Delta_{3})_M-f_M\circ TD_{n-2}\circ(\Delta_{2})_M+\ldots+f_M\circ (\Delta_{n})_M-f_M\circ TD_{1}\circ(\Delta_{n-1})_M\notag\\
            &=f_M\circ (\Delta_n)_M.\label{49}
            \end{align}
        Finally, on substituting \eqref{49} in \eqref{44} we get 
        $$D_n\circ f_M=f_M\circ (TD_{n}+TD_{n-1}\circ(\Delta_1)_M+\ldots+TD_1\circ(\Delta_{n-1})_M+(\Delta_n)_M).$$
        For example, if $n=3,$ we obtain
        \begin{align}
             D_3\circ f_M&=D_1^3\circ f_M\notag\\
                        &=D_1\circ D_{2}\circ f_M\tag{$D_i=D_1^i$}\\
                       &=D_1\circ f_M\circ(TD_2+TD_1\circ(\Delta_1)_M+\ldots+(\Delta_2)_M) \tag{by induction}\\
                       &=f_M\circ(TD_1+(\Delta_1)_M)\circ(TD_2+TD_1\circ(\Delta_1)_M+(\Delta_2)_M) \notag\\
                       &=f_M\circ TD_1\circ(TD_2+TD_2\circ(\Delta_1)_M+(\Delta_1)_M)+f_M\circ (\Delta_1)_M\circ (TD_2+TD_1\circ(\Delta_1)_M+(\Delta_2)_M)\notag\\
                       &=f_M\circ (TD_{3}+TD_{2}\circ(\Delta_1)_M+TD_1\circ(\Delta_{2})_M)+f_M\circ (\Delta_1)_M\circ (TD_{2}+TD_{1}\circ(\Delta_1)_M+(\Delta_{2})_M).\label{50}
        \end{align}
        and 
        \begin{align}
           & f_M\circ (\Delta_1)_M\circ (TD_{2}+TD_{1}\circ(\Delta_1)_M+(\Delta_{2})_M)\notag\\
            &=f_M\circ (\Delta_1)_M\circ TD_{2}+f_M\circ TD_{1}\circ(\Delta_{2})_M-f_M\circ TD_{2}\circ(\Delta_{1})_M +f_M\circ(\Delta_{3})_M-f_M\circ TD_{1}\circ(\Delta_{2})_M\tag{by\eqref{61}}\\
            &=f_M\circ (\Delta_3)_M.\tag{$(\Delta_1)_M\circ TD_{2}=TD_2\circ (\Delta_1)_M$}
        \end{align}
        Therefore, we have 
        \begin{equation*}
            D_3\circ f_M=f_M\circ (TD_{3}+TD_{2}\circ(\Delta_1)_M+TD_1\circ(\Delta_{2})_M+(\Delta_3)_M).
        \end{equation*}
    \end{eg}
	\section{Higher differential Torsion Theory}\label{section4}
	In this section, we establish the framework for higher differential torsion theory in the Eilenberg-Moore category of monad. Throughout this section,  we assume that $\mathscr C$ is a Grothendieck category and $\{G_i\}_{i\in \Lambda}$ is a family of finitely generated projective generators for $\mathscr C$. We begin by recalling a few preliminary definitions from \cite{LLV} and \cite{BS}.
	
	\smallskip
	\begin{defn}(\cite[\S 1.1]{BS})
		Let $\mathscr D$ be an abelian category. A pair of full subcategories $\tau=(\mathscr{T},\mathscr{F})$ of $\mathscr D$ is said to be a torsion theory if for any $M\in \mathscr{T}$ and $N\in \mathscr{F}$, $\mathscr{D}(M,N)=0$, and 
		for any $M\in \mathscr{D}$, there exist $M'\in \mathscr{T}$ and $M''\in \mathscr{F}$ such that we have the following short exact sequence
		\[
		0\longrightarrow M'\longrightarrow M\longrightarrow M''\longrightarrow 0
		\] in $\mathscr D.$
		Further, a torsion theory $\tau=(\mathscr{T},\mathscr{F})$ is said to be hereditary if the torsion class $\mathscr{T}$ is closed under subobjects. 
	\end{defn} 
	\begin{defn}\label{EEEEEE}(see~\cite[Proposition 3.2  \& Proposition 3.3]{LLV})\label{D4.2} Let $\mathscr{C}$ be a Grothendieck category, and let $\{G_i\}_{i \in \Lambda}$ be a family of finitely generated projective generators for $\mathscr{C}$. A family of Gabriel filters  is a collection $\mathcal{L} = \{\mathcal{L}_{G_i}\}_{i \in \Lambda}$ that satisfies the following conditions:
		
		\smallskip
		i) For each $i \in \Lambda$, $\mathcal{L}_{G_i}$ consists of subobjects of $G_i$. In particular, 
		$\mathcal{L}_{G_i} \neq \varnothing$ as $G_i\in \mathcal L_{G_i}$.
		
		\smallskip
		ii) If $I \subseteq J \subseteq G_i$ and $I \in \mathcal{L}_{G_i}$, then $J \in \mathcal{L}_{G_i}$.
		
		\smallskip
		iii) For any morphism $g :G_i\longrightarrow G_j$ in $\mathscr C$ and for all $I \in \mathcal{L}_{G_j}$, we have $g^{-1}(I) \in \mathcal{L}_{G_i}$.
		
		\smallskip
		iv) If $I \subseteq J \in \mathcal{L}_{G_j}$ such that $g^{-1}(I) \in \mathcal{L}_{G_i}$ for every $g:G_i\longrightarrow J\subseteq G_j$ (in $\mathscr{C}$), then $I \in \mathcal{L}_{G_j}$.
		
	\end{defn}
	For a Grothendieck category $\mathscr C$, we know from (see \cite[\S 3]{LLV},\cite[\S 2.2]{DA}) that there is a one-to-one correspondence between the hereditary torsion theory $\tau=(\mathscr T,\mathscr F)$ on $\mathscr{C}$ and  the Gabriel filters $\mathcal{L} = \{\mathcal{L}_{G_i}\}_{i \in \Lambda}$ in $\mathscr{C}$ given as follows:
	\begin{equation}\label{STST}
		I\in \mathscr L_{G_i} \Leftrightarrow G_i/I\in \mathscr T~.
	\end{equation}
    We now recall the following results from \cite{DA}.
    \begin{thm}\cite[Proposition 2.2]{DA}\label{T2.3}
		Let $\mathscr C$ be a Grothendieck category and $\{G_i\}_{i\in\Lambda}$ be a family of finitely generated projective generators for $\mathscr C$. Let $(T, \theta, \zeta)$ be a monad on $\mathscr C$ that is exact and preserves colimits. Then $EM_T$ is a Grothendieck category and $\{TG_i\}_{i\in\Lambda}$ gives a family of finitely generated projective generators for $EM_T$.
	\end{thm}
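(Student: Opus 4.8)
The plan is to extract every assertion from the free--forgetful adjunction $F \dashv U$ between $\mathscr{C}$ and $EM_T$, where $U : EM_T \longrightarrow \mathscr{C}$ is the forgetful functor and $F = T$ sends $M$ to the free module $(TM,\theta_M)$. The defining natural isomorphism \eqref{adj}, namely $EM_T(TM,N)\cong \mathscr{C}(M,UN)$, is exactly this adjunction. Two standard structural facts drive the whole argument: $U$ creates all limits, and $U$ creates precisely those colimits that $T$ preserves. Since $T$ is assumed exact and to preserve colimits, $U$ will turn out to be an exact, colimit-preserving, faithful functor, and this is what makes each Grothendieck axiom and each property of the $TG_i$ transfer cleanly.

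First I would show that $EM_T$ is abelian. As $\mathscr{C}$ is abelian and $T$ is additive (being exact), each $EM_T(M,N)$ is a subgroup of $\mathscr{C}(UM,UN)$, so $EM_T$ is preadditive, and the zero object and finite biproducts are created by $U$. Because $U$ creates limits, kernels exist in $EM_T$ and are computed underlyingly in $\mathscr{C}$; because $T$ preserves colimits, $U$ creates them too, so cokernels exist and are likewise computed in $\mathscr{C}$, the $T$-action descending precisely because $T$ is right exact. Hence $U$ is exact. Since $U$ is faithful and reflects monomorphisms and epimorphisms, the axioms that every mono is a kernel and every epi a cokernel transfer from $\mathscr{C}$ to $EM_T$, yielding an abelian category. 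Next, filtered colimits in $EM_T$ are created by $U$ and are exact in $\mathscr{C}$; exactness and conservativity of $U$ then give exactness of filtered colimits in $EM_T$, which is the axiom (AB5).

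For the generating family I invoke \eqref{adj} in the form $EM_T(TG_i,N)\cong\mathscr{C}(G_i,UN)$. Given any $N\in EM_T$, the family $\{G_i\}$ generates $\mathscr{C}$, so the canonical morphism $\bigoplus_i\bigoplus_{G_i\to UN} G_i \longrightarrow UN$ is epi; applying $F=T$ (which preserves coproducts, being cocontinuous) and using the adjunction produces a morphism $\bigoplus_i\bigoplus_{TG_i\to N} TG_i \longrightarrow N$ whose image under $U$ is this epimorphism, so it is epi by conservativity of $U$. Thus $\{TG_i\}$ generates $EM_T$, and $\bigoplus_i TG_i$ is a single generator, completing the proof that $EM_T$ is Grothendieck. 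Projectivity of each $TG_i$ is then immediate: $EM_T(TG_i,-)\cong \mathscr{C}(G_i,U(-))$ is the composite of the exact functor $U$ with the exact functor $\mathscr{C}(G_i,-)$ (exact as $G_i$ is projective), hence exact. Finite generation follows from the same isomorphism, since $G_i$ finitely generated means $\mathscr{C}(G_i,-)$ commutes with directed unions of subobjects, while $U$ preserves filtered colimits and is exact, so it sends directed unions of subobjects in $EM_T$ to directed unions of subobjects in $\mathscr{C}$; therefore $TG_i$ is finitely generated.

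I expect the main obstacle to be the construction of cokernels in $EM_T$ together with the proof that $U$ is exact: concretely, verifying that the cokernel computed in $\mathscr{C}$ of the underlying morphism carries a unique compatible $T$-module structure realizing it as the cokernel in $EM_T$. This is exactly the point at which the right exactness of $T$ is indispensable, and it is the crux on which abelianness, and hence every subsequent step, depends. The generating, projectivity, and finiteness statements are, by contrast, formal consequences of \eqref{adj} once the exactness and cocontinuity of $U$ are in hand.
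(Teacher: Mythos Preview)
The paper does not supply a proof of this statement: it is merely recalled from \cite[Proposition~2.2]{DA} and stated without argument. So there is nothing to compare your proposal against in the present paper.

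That said, your outline is the standard and correct route. The key structural facts you invoke---that the forgetful functor $U:EM_T\to\mathscr C$ creates all limits, and creates those colimits that $T$ preserves---are exactly what one uses, and your deductions from them (abelianness of $EM_T$, exactness and cocontinuity of $U$, exactness of filtered colimits, and the transfer of the generator, projectivity, and finite-generation properties via the adjunction isomorphism $EM_T(TG_i,-)\cong\mathscr C(G_i,U(-))$) are all sound. Your identification of the cokernel step as the crux is accurate: this is precisely where right exactness of $T$ is required to endow the underlying cokernel with a $T$-module structure. One minor remark: for the finite-generation step you might also want to note that $U$ reflects monomorphisms (which you already observed), so that subobjects in $EM_T$ correspond to subobjects in $\mathscr C$, ensuring the directed-union argument goes through cleanly.
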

	\begin{defn}(see \cite[Definition 3.3]{DA})
		Let $(T, \theta, \zeta,\delta)$ be a monad on $\mathscr C$ such that $T$ is exact and preserves colimits. Then, a family of Gabriel filters $\mathcal L=\{\mathcal L_{TG_{j}}\}_{j\in \Lambda}$ on $EM_T$ is $\delta$-invariant if for each $j\in\Lambda$ and $I\in \mathcal L_{TG_{j}}$, there exists some $J\in\mathcal L_{TG_{j}}$ such that $\delta_{G_j} (J)\subseteq I .$ 
	\end{defn}
    Let $\tau = (\mathscr T, \mathscr F)$ be a hereditary torsion theory on $EM_{T}$ and $M\in EM_T$. We recall from \cite[\S 3]{DA} that the torsion subobject of $M$ is given by
	\[
	M_\tau:=\sum_{N\in  |M_\tau|}N~,
	\]
	where $|M_\tau|=\{N\subseteq M|\ker(g:TG_i\longrightarrow N)\in \mathcal{L}_{TG_i} $, for all $g\in EM_T(TG_i,N)$ and for all $i\in \Lambda$\}. Then, we have the following definition from \cite{DA}.
	\begin{defn}(see \cite[Definition 3.7]{DA})
	   Let $(T, \theta, \zeta,\delta)$ be a differential monad on $\mathscr C$ and $\tau = (\mathscr T, \mathscr F)$ be a hereditary torsion theory on $EM_{T}$. Then $\tau = (\mathscr T, \mathscr F)$ is called  differential if for any object $(M,f_M)\in EM_{T}$ and any $\delta$-derivation $d$ on $M$, $d(M_{\tau})\subseteq M_{\tau}$.	
	\end{defn}
	We generalize these definitions in the context of higher derivation as follows.
	\begin{defn}
		Let $(T, \theta, \zeta,\Delta)$ be a higher differential monad of order $n$ on $\mathscr C$ such that $T$ is exact and preserves colimits. Then, we say that a family of Gabriel filters $\mathcal L=\{\mathcal L_{TG_{j}}\}_{j\in \Lambda}$ on $EM_T$ is $\Delta$-invariant of order $n$ if for each $j\in\Lambda$ and $I\in \mathcal L_{TG_{j}}$, there exists some $J\in\mathcal L_{TG_{j}}$ such that $(\Delta_i)_{G_j} (J)\subseteq I $ for all $i=0,1,\ldots,n.$ 
	\end{defn}

	\begin{defn}
		Let $(T, \theta, \zeta,\Delta)$ be a higher differential monad of order $n$ on $\mathscr C$ such that $T$ is exact and preserves colimits, and let $\tau = (\mathscr T, \mathscr F)$ be a hereditary torsion theory on $EM_{T}$. Then, we say that $\tau = (\mathscr T, \mathscr F)$ is higher differential of order $n$ if for any object $(M,f_M)\in EM_{T}$ and any higher $\Delta$-derivation $D$ of order $n$ on $M$, $D_i(M_{\tau})\subseteq M_{\tau}$ for all $i=0,1,\ldots,n.
		$	
	\end{defn}
    
 We note that the above notions admit a natural extension to the infinite order case. More precisely, for a higher differential monad $(T, \theta, \zeta,\Delta)$ on $\mathscr C$ a family of Gabriel filters $\mathcal L=\{\mathcal L_{TG_{j}}\}_{j\in \Lambda}$ on $EM_T$ is $\Delta$-invariant if for each $j\in\Lambda$ and $I\in \mathcal L_{TG_{j}}$, there exists some $J\in\mathcal L_{TG_{j}}$ such that $(\Delta_i)_{G_j} (J)\subseteq I $ for all $i\geq0.$ Similarly, a hereditary torsion theory $\tau = (\mathscr T, \mathscr F)$ on $EM_T$ is higher differential if for any object $(M,f_M)\in EM_{T}$ and any higher $\Delta$-derivation $D$ on $M$, $D_i(M_{\tau})\subseteq M_{\tau}$ for all $i\geq 0.
		$
	\begin{Thm}\label{T3.4}
		Let $(T, \theta, \zeta,\Delta)$ be a higher differential monad of order $n$ on $\mathscr C$ that is exact and preserves colimits, and $\tau$ be a hereditary torsion theory on $EM_{T}$. Then the following statements are equivalent.
		
		\smallskip
		i) Every Gabriel filter corresponding to $\tau$ is $\Delta$-invariant of order $n$.
		
		\smallskip
		ii) $\tau$ is a higher differential torsion theory of order $n$.
		
		\smallskip
		iii) For any $(M,f_M) \in EM_T$, every higher $\Delta$-derivation $D$ of order $n$, every submodule $N \subseteq M_\tau$, and every $j \in \{0,\ldots,n\}$, there exists $K \in \mathcal{L}_{TG_i}$ such that
		$(\Delta_j)_{G_i}(K) \subseteq \ker(TG_i \to N).$
		
		\smallskip
		iv) For any $(M,f_M) \in EM_T$, every higher $\Delta$-derivation $D$ of order $n$ on $M$, and every $N \subseteq M_\tau$, there exists $K \in \mathcal{L}_{TG_i}$ such that 
		$(\Delta_j)_{G_i}(K) \subseteq \ker(TG_i \to N)$ for all $j = 0,1,\ldots,n$.
	\end{Thm}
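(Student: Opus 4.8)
The plan is to close the cycle (i) $\Rightarrow$ (ii) $\Rightarrow$ (iii) $\Rightarrow$ (iv) $\Rightarrow$ (i), everything resting on one identity extracted from the defining square \eqref{eq33}. For a morphism $h\colon G_i\to M$ in $\mathscr C$ write $\widetilde u:=f_M\circ Tu$ for the $T$-module map adjoint to $u\colon G_i\to M$ under \eqref{adj}. Precomposing \eqref{eq33} with $Th$ and using naturality of each $\Delta_k$ (so $(\Delta_k)_M\circ Th=Th\circ(\Delta_k)_{G_i}$) gives
\[
D_j\circ\widetilde h \;=\; \sum_{k=0}^{j}\widetilde{D_{j-k}h}\circ(\Delta_k)_{G_i}\qquad(0\le j\le n),
\]
hence, reading off the $k=j$ summand (for which $D_0=\mathrm{id}$), $\widetilde{D_j h}=D_j\circ\widetilde h-\sum_{k=1}^{j}\widetilde{D_{j-k}h}\circ(\Delta_k)_{G_i}$. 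I will also use that a Gabriel filter is closed under finite intersections (a formal consequence of axioms (ii)--(iv) of Definition \ref{D4.2}), the correspondence \eqref{STST}, and hereditariness: if a $T$-module map $g\colon TG_i\to M$ has $\operatorname{im}g\subseteq M_\tau$, then $TG_i/\ker g\cong\operatorname{im}g\in\mathscr T$, so $\ker g\in\mathcal L_{TG_i}$.

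\emph{(i) $\Rightarrow$ (ii).} Fix $(M,f_M)$ and a higher $\Delta$-derivation $D=\{D_i\}$ of order $n$. As the $TG_i$ generate $EM_T$ (Theorem \ref{T2.3}), $M_\tau$ is the sum of the images of the $T$-module maps $\widetilde x\colon TG_i\to M_\tau$, $x\in\mathscr C(G_i,M_\tau)$, $i\in\Lambda$, and $D_j(M_\tau)=\sum\operatorname{im}(D_j\circ\widetilde x)\subseteq\sum_k\operatorname{im}(\widetilde{D_{j-k}x})$ by the identity above; so it suffices to prove $\operatorname{im}(\widetilde{D_j x})\subseteq M_\tau$ for all such $x$ and all $j$, by induction on $j$. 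The case $j=0$ is clear. For the step, assume this for all $m<j$; then $\operatorname{im}(\widetilde x)\subseteq M_\tau$ gives $\ker(\widetilde x)\in\mathcal L_{TG_i}$, and $\ker(\widetilde{D_m x})\in\mathcal L_{TG_i}$ for $m<j$. Apply $\Delta$-invariance of $\mathcal L$ to $\ker(\widetilde x)$ and to each $\ker(\widetilde{D_m x})$, intersect the resulting members with $\ker(\widetilde x)$, and call the result $J\in\mathcal L_{TG_i}$. On $J$ the map $D_j\circ\widetilde x$ vanishes (it factors through $\widetilde x|_J=0$), and each $\widetilde{D_{j-k}x}\circ(\Delta_k)_{G_i}$ vanishes because $(\Delta_k)_{G_i}(J)\subseteq\ker(\widetilde{D_{j-k}x})$ (here $j-k<j$); by the rearranged identity $J\subseteq\ker(\widetilde{D_j x})$, so $\ker(\widetilde{D_j x})\in\mathcal L_{TG_i}$ and $\operatorname{im}(\widetilde{D_j x})\subseteq M_\tau$.

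\emph{(ii) $\Rightarrow$ (iii), and (iii) $\Rightarrow$ (iv).} For (ii)$\Rightarrow$(iii), fix $M$, $D$, $N\subseteq M_\tau$, $i$, $g\colon TG_i\to N$ with $\mathscr C$-adjoint $\bar g\colon G_i\to N$, and induct on $j$ (base $j=0$: take $K=\ker g$, which lies in $\mathcal L_{TG_i}$ since $\operatorname{im}g\subseteq M_\tau$). Hypothesis (ii) gives $D_m(M_\tau)\subseteq M_\tau$, so $\operatorname{im}(\widetilde{D_m\bar g})\subseteq M_\tau$ and $\ker(\widetilde{D_m\bar g})\in\mathcal L_{TG_i}$, whence the induction hypothesis applies to the $T$-module map $\widetilde{D_m\bar g}\colon TG_i\to\operatorname{im}(\widetilde{D_m\bar g})$ for any index $<j$. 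Using $g\circ(\Delta_j)_{G_i}=D_j\circ g-\sum_{k=0}^{j-1}\widetilde{D_{j-k}\bar g}\circ(\Delta_k)_{G_i}$, set $K:=\ker g\cap\ker(\widetilde{D_j\bar g})\cap\bigcap_{k=1}^{j-1}K_{j-k,k}\in\mathcal L_{TG_i}$, where $(\Delta_k)_{G_i}(K_{j-k,k})\subseteq\ker(\widetilde{D_{j-k}\bar g})$ by the induction hypothesis; every term on the right vanishes on $K$, so $(\Delta_j)_{G_i}(K)\subseteq\ker g$. Then (iii)$\Rightarrow$(iv) is immediate: for fixed $M,D,N,i,g$ take $K_j$ from (iii) for each $j$ and replace them by $\bigcap_{j=0}^{n}K_j\in\mathcal L_{TG_i}$.

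\emph{(iv) $\Rightarrow$ (i).} This is the step I expect to be the main obstacle. Given $i\in\Lambda$ and $I\in\mathcal L_{TG_i}$ we want $J\in\mathcal L_{TG_i}$ with $(\Delta_k)_{G_i}(J)\subseteq I$ for all $k$; if we can realize $I$ as $\ker(TG_i\to N)$ for some $N\subseteq M_\tau$ inside a module $M$ carrying a higher $\Delta$-derivation of order $n$, then (iv) delivers such a $J$ (with $J\subseteq I$ since $\Delta_0=\mathrm{id}$). The difficulty is that the obvious choice $M=TG_i/I$ need not carry a higher $\Delta$-derivation at all: the canonical family $\{(\Delta_k)_{G_i}\}$ on the free module $TG_i$ (which is a higher $\Delta$-derivation on $(TG_i,\theta_{G_i})$ by \eqref{eqE} and naturality) does not descend, because $(\Delta_k)_{G_i}(I)\nsubseteq I$ in general. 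The plan is instead to embed the torsion module $TG_i/I$ into a module that does carry one: the canonical higher $\Delta$-derivation on a free module $TX$ extends to its injective hull $E(TX)$ in $EM_T$ (a higher-derivation analogue of Golan's extension-to-injectives result), products of such $E(TX)$ inherit a componentwise higher $\Delta$-derivation, and $TG_i/I$ embeds into such a product; composing $TG_i\twoheadrightarrow TG_i/I\hookrightarrow M$ then yields a $T$-module map with kernel $I$ and image in $M_\tau$ (torsion, $\tau$ being hereditary). Making this extension-to-injectives step rigorous in the present monadic, merely-Grothendieck setting — without the $\mathfrak F$-linear hypothesis used in Sections~\ref{section2}--\ref{section3} — is the delicate point; an alternative route would be to produce directly a $\Delta$-stable $T$-submodule of $TG_i$ lying in $\mathcal L_{TG_i}$ and below $I$, but exhibiting such a submodule is essentially equivalent to the conclusion and seems no easier.
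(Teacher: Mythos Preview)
Your arguments for (i)$\Rightarrow$(ii)$\Rightarrow$(iii)$\Rightarrow$(iv) are correct and follow the paper's approach; the adjoint identity $D_j\circ\widetilde h=\sum_{k}\widetilde{D_{j-k}h}\circ(\Delta_k)_{G_i}$ that you extract from \eqref{eq33} is precisely what drives the paper's explicit diagram chases, and your inductive schemes for (i)$\Rightarrow$(ii) and (ii)$\Rightarrow$(iii) mirror the paper's (the paper likewise builds $K$ in (ii)$\Rightarrow$(iii) by intersecting kernels of maps into the various $D_jN$, and (iii)$\Rightarrow$(iv) is the same finite intersection in both).

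For (iv)$\Rightarrow$(i), the paper's proof is exactly the one-liner you anticipated and rejected: given $I\in\mathcal L_{TG_i}$, it observes $TG_i/I\in\mathscr T$ via \eqref{STST} and invokes (iv) with $N=TG_i/I$ and $g$ the quotient map, obtaining $K$ with $(\Delta_j)_{G_i}(K)\subseteq\ker g=I$, never specifying which ambient $M$ or which higher $\Delta$-derivation $D$ is in play. Your objection is well-taken: since (iv) is universally quantified over $D$, it is vacuous for a module carrying no higher $\Delta$-derivation, and $TG_i/I$ need not carry one (already for $n=1$: take $A=k[x]$, $\delta=d/dx$, $I=(x^2)$; then $A/I$ admits no $\delta$-derivation, since any such $d$ would force $0=d(x^2)=2x\neq 0$). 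So the gap you have flagged is present in the paper's own argument, not an obstacle of your making. Your injective-hull embedding plan is a reasonable repair strategy in spirit, but as you concede it is not carried out and would require an independent extension-of-higher-derivations-to-injectives result in this monadic Grothendieck setting; neither your sketch nor the paper's proof closes this step rigorously.
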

	\begin{proof}
		$i)\implies ii)$. Let $\mathcal{L}=\{\mathcal{L}_{TG_i}\}$ be the Gabriel filter corresponding to $\tau$ and $(M,f_M)\in EM_T$. Let $D:M\longrightarrow M$ be a higher $\Delta$-derivation of order $n$ on $M$ and $N\in |M_\tau|$. To show that $\tau$ is a higher differential torsion theory of order $n$, we will proceed by induction on $n$. For $n=0$, the proof is clear. We assume that the statement is true for $p=0,1,\ldots,n-1$ and we prove it for $p=n$.

        \smallskip
        We consider a morphism $g:G_i\longrightarrow D_nN$ in $EM_T$ and we show that $\textup{ker}(g)\in \mathcal{L}_{TG_i}$. Corresponding to $g$ there is a morphism $\tilde{g}:TG_i\xrightarrow{\zeta_{G_i}}TG_i\xrightarrow{g}D_nN$ in  $\mathscr{C}$. Let $\tilde{D}_n:N\longrightarrow D_nN$ be an epimorphism in $\mathscr{C}$ and $i_n:D_nN\longrightarrow M$ be a monomorphism in $EM_T$. Then, we have $i_n\circ \tilde{D}_n=D_n\circ i$. Since $G_i$ is projective, there exists a morphism $\tilde{h}:G_i\longrightarrow N$ in $\mathscr{C}$ such that $\tilde{D}_n\circ \tilde{h}=\tilde{g}=g\circ \zeta_{G_i}$. Let $h:TG_i\xrightarrow{T\tilde{h}}TN\xrightarrow{f_N}N$ be the morphism corresponding to $\tilde{h}$ in $EM_T$. We set $$H=\cap \{\textup{ker}(TG_i\longrightarrow D_pN)\}_{p=0}^{n-1}\subseteq \ker(h).$$ Since for all $p=1,2\ldots,n-1$, $D_pN\in |M_\tau|$, it is clear that $H\in \mathcal{L}_{TG_i}$. By (i), there exists an object $J\in \mathcal{L}_{TG_i}$ such that $(\Delta_t)_{G_i}(J)\subseteq H $ for all $t=0 ,\ldots ,n$. Let $I=H\cap J$. From the following commutative diagram
		\begin{equation*}
			\begin{tikzcd}[column sep={between origins, 12em}, row sep=large]
				TG_i 
				\arrow[r, "{Ti \circ T\tilde{h}}"]  & TM 
				\arrow[r, "{TD_n + TD_{n-1}\circ  (\Delta_{1})_M + \cdots + (\Delta_{n})_{M}}"] 
				\arrow[d, "f_M"']  & TM  \arrow[d, "f_M"] \\
				& M \arrow[r, "D_n"] & M ,
			\end{tikzcd}
		\end{equation*}
		we have
		\begin{equation}\label{T}
			D_n\circ f_M\circ Ti\circ T\tilde{h}(I)=f_M\circ TD_n\circ Ti\circ T\tilde{h}(I)+\sum_{k=1}^{n-1}f_M\circ TD_k\circ(\Delta_{n-k})_M\circ Ti\circ T\tilde{h}(I)\\ 
			+f_M\circ(\Delta_n)_M\circ Ti\circ T\tilde{h}(I)~.
		\end{equation}
		We note that
			\begin{align*}
				D_n\circ f_M\circ Ti\circ T\tilde{h}(I)&=D_n\circ i\circ f_N\circ T\tilde{h}(I)\tag{ $f_M\circ Ti=i\circ f_N$}\\
				&=D_n\circ i\circ h (I)\tag{ $f_N\circ T\tilde{h}=h$}\\
				&=0 \tag{\text{since $I\subseteq\textup{ker}(h)$}}~.
			\end{align*}
		Also, \begin{align*}
				f_M\circ(\Delta_n)_M\circ Ti\circ T\tilde{h}(I)
				&=f_M\circ Ti \circ(\Delta_n)_N\circ T\tilde{h}(I)\tag{ $(\Delta_i)_M\circ Ti=Ti \circ (\Delta_i)_N$}\\
				&=i\circ f_N \circ T\tilde{h}((\Delta_n)_{G_i}(I))\tag{$(\Delta_n)_{N}\circ T\tilde{h}=T\tilde{h}\circ (\Delta_n)_{G_i}$} \\
				&=i\circ h((\Delta_n)_{G_i}(I))\tag{$f_N\circ T\tilde{h}=h$}\\
				&=0 \tag{$(\Delta_n)_{G_i}(I)\subset H\subseteq \textup{ker}(h)$}~.
			\end{align*}
		Furthermore, as $f_M\circ TD_k\circ(\Delta_{n-k})_M\circ Ti\circ T\tilde{h}:TG_i\longrightarrow D_kN$ is a morphism in $EM_T $ and $(\Delta_{n-k})_{G_i}(I)\subseteq H$, we observe that for any $0< k< n$, \begin{align*}
				f_M\circ TD_k\circ(\Delta_{n-k})_M\circ Ti\circ T\tilde{h}(I)&=f_M\circ TD_k\circ Ti\circ T\tilde{h}((\Delta_{n-k})_{G_i}(I))=0~. 
			\end{align*}
            
        Finally from (\ref{T}), we get 
			\begin{align*}
				0&=f_M\circ TD_n\circ Ti\circ T\tilde{h}(I)\\
				&=f_M\circ Ti_n \circ T\tilde{D_n}\circ  T\tilde{h}(I)\tag{$i_n\circ \tilde{D}_n=D_n\circ i$}\\
				&=f_M\circ Ti_n\circ Tg\circ T\zeta_{G_i}(I)\tag{$\tilde{D}_n\circ \tilde{h}=g\circ \zeta_{G_i}$}\\
				&=i_n\circ g\circ \theta_{G_i}(T\zeta_{G_i}(I))\tag{$f_M\circ Ti_n\circ Tg=i_n\circ g\circ \theta_{G_i}$}\\
				&=i_n\circ g(I)\tag{$\theta_{G_i}\circ T\zeta_{G_i}=1_{TG_i}$}~.
			\end{align*}
		Since $i_n$ is monomorphism, $g(I)=0$. Hence $I\subseteq \text{ker}(g)$. Further, as $I\in \mathcal{L}_{TG_i}$, by property (ii) of Definition \ref{EEEEEE}, we get $\text{ker}(g)\in \mathcal{L}_{TG_i}$. This proves that $D_nN\in |M_\tau|$.
			
		\smallskip
		$ii)\implies iii).$ Let $N\in |M_\tau|$ and  $g:TG_i\longrightarrow N$ be a morphism in $EM_T$. Let $\hat{g}:G_i\longrightarrow N$ be the corresponding morphism given by $\hat{g}=g\circ \zeta_{G_i}$ in $\mathscr{C}$. We now proceed by induction on $n$. For $n=0$, we set $K = \ker(g)\in\mathcal{L}_{TG_i}$. Then $(\Delta_0)_{G_i}(K) = K \subseteq \ker(g)$. Therefore, the statement is true for $n=0$. Now we assume that the statement is true for all $t< n$ and prove it for $t=n$. 
            
        \smallskip
	    Since $\tau$ is higher differential torsion theory of order $n$, from the proof of $(i)\implies(ii)$, we have $D_jN\in |M_\tau|$ for all $j=0,1,\ldots,n$. Therefore, by induction, there exists an object $K_{tj}\in \mathcal{L}_{TG_i}$ such that $(\Delta_t)_{G_i}(K_{tj})\subseteq$ker$(TG_i\longrightarrow D_jN)$ for all $t< n$ and for all $j\leq n$. We let 
        \begin{equation}\label{EqK}
				K=\bigcap_{\substack{t< n \\ j\leq n}}K_{tj} \cap \ker(TG_i\longrightarrow D_{n}N).
        \end{equation} 
        Since $D_nN\in |M_\tau|$,  $K\in \mathcal{L}_{TG_i}$. We now show that $(\Delta_n)_{G_i}(K)\subseteq \ker(g)$. For this, we consider the following commutative diagram 
			\begin{equation*}
				\begin{tikzcd}[column sep={between origins, 12em}, row sep=large]
					TG_i 
					\arrow[r, "{Ti \circ T\hat{g}}"]  & TM 
					\arrow[r, "{TD_n + TD_{n-1}\circ (\Delta_1)_M + \cdots + (\Delta_n)_{M}}"] 
					\arrow[d, "f_M"']  & TM  \arrow[d, "f_M"] \\
					& M \arrow[r, "D_n"] & M~,
				\end{tikzcd}
			\end{equation*}
			i.e, we have 
			\begin{equation}
				D_n\circ f_M\circ Ti\circ T\hat{g}(K)=f_M\circ TD_n\circ Ti\circ T\hat{g}(K)+\sum_{k=1}^{n-1}f_M\circ TD_k\circ(\Delta_{n-k})_M\circ Ti\circ T\hat{g}(K)\\ 
				+f_M\circ(\Delta_n)_M\circ Ti\circ T\hat{g}(K).
			\end{equation}
			We note that 
            \begin{align*}
				D_n\circ f_M\circ Ti\circ T\hat{g}(K)
				&=D_n\circ i \circ f_N\circ T\hat{g}(K)\tag{$f_M\circ Ti=i\circ f_N$}\\
				&=D_n\circ i\circ g(K)\tag{ $f_N\circ T\hat{g}=g$}\\
				&=0\tag{ $K\subseteq \ker(g)$}~.
			\end{align*}
			Furthermore, for $0<k\leq n$, we get
			\begin{align*}
				f_M\circ TD_k\circ(\Delta_{n-k})_M\circ Ti\circ T\hat{g}(K)
                &=f_M\circ TD_k\circ Ti\circ(\Delta_{n-k})_N \circ T\hat{g}(K)\tag{$(\Delta_{n-k})_M\circ Ti=Ti\circ(\Delta_{n-k})_N $}\\
				&=f_M\circ TD_k\circ Ti\circ T\hat{g}((\Delta_{n-k})_{G_i}(K))\tag{$(\Delta_{n-k})_N \circ T\hat{g}=T\hat{g}\circ(\Delta_{n-k})_{G_i}$}\\
				&=f_M\circ T(D_k\circ i\circ \hat{g})((\Delta_{n-k})_{G_i}(K))\\
				&=(\widetilde{D_k\circ i\circ \hat{g}})((\Delta_{n-k})_{G_i}(K))\\
				&=0,
			\end{align*}
			where the last equality follows from the fact that $D_kN\in |M_\tau|$ and $(\widetilde{D_k\circ i\circ \hat{g}}):TG_i\longrightarrow D_kN$ is a morphism in $EM_T $. Hence, we obtain 
			\begin{align*}
				0=f_M\circ(\Delta_n)_M\circ Ti\circ T\hat{g}(K)
				&=f_M\circ Ti\circ T\hat{g}((\Delta_n)_{G_i}(K))\tag{$(\Delta_n)_M\circ Ti\circ T\hat{g}=Ti\circ T\hat{g}\circ(\Delta_n)_{G_i}$}\\
				&=i\circ f_N\circ T\hat{g}((\Delta_n)_{G_i}(K))\tag{$f_M\circ Ti=i\circ f_N$}\\
				&=i \circ g((\Delta_n)_{G_i}(K))\tag{$f_N\circ T\hat{g}=g$}.
			\end{align*}
			Since $i$ is a monomorphism, we get $ g((\Delta_n)_{G_i}(K))=0$. This shows that $(\Delta_n)_{G_i}(K)\subseteq \ker(g)$.
			
			\smallskip 
			$iii)\implies iv).$ Let $N\in |M_\tau|$. Then by $(iii)$, for each $j=0,1,\ldots,n$, there exists $K_j\in \mathcal{L}_{TG_i}$ such that $(\Delta_j)_{G_i}(K_j)\subseteq \ker(TG_i\longrightarrow N).$ The proof now follows by taking $K=\bigcap_{0\leq j\leq n} K_j $. 
			
			\smallskip
			$iv)\implies i).$ Given $I\in \mathcal{L}_{TG_i}$, we know from (\ref{STST}) that $TG_i/I\in \mathcal{T}$. Now, by (iv), there exists an object $K\in \mathcal{L}_{TG_i}$ such that $(\Delta_j)_{G_i}(K)\subseteq \ker (TG_i\longrightarrow TG_i/I)=I$ for each $j=0,1,\ldots,n.$ This completes the proof.
		\end{proof}
		\begin{rem}
		Note that Theorem \ref{T3.4} continues to hold in the setting of a higher differential
monad $(T,\theta,\zeta,\Delta)$ on $\mathscr{C}$, where $T$ is exact and
preserves colimits and $\tau$ is a hereditary torsion theory on $EM_T$. The argument proceeds in a manner analogous to the proof given
above.
		\end{rem}

		\section{Higher order derivations on module of quotients}\label{section5}
			We continue with a Grothendieck category $\mathscr C$ having a family $\{G_i\}_{i\in\Lambda}$ of finitely generated projective generators for $\mathscr C$. Let  \(\tau = (\mathscr{T}, \mathscr{F})\) be a hereditary torsion theory on \(EM_T\) and \(\mathcal{L} = \{\mathcal{L}_{TG_i}\}_{G_i \in \mathscr{C}}\) be the family of Gabriel filter associated with \(\tau\). Then, we know from \cite{GG} that for every object \((M, f_M) \in EM_T\), there exists a functor
			\[
			Q_\tau : EM_T \longrightarrow EM_T, \qquad M \longmapsto Q_\tau(M),
			\]
			defined by setting
			\[
			Q_\tau(M)(TG_i) = \varinjlim_{I \in \mathcal{L}_{TG_i}} EM_T \bigl(I, M / M_\tau \bigr).
			\]
			The object \(Q_\tau(M)\) is referred to as the module of quotients of \(M\). Let $p:M\longrightarrow  M/M_{\tau}$ denotes the canonical epimorphism in $EM_{T}$. For any morphism \(f : TG_i \to M\) in \(EM_T\), we know from \cite[\S\,2.2]{GG} that there exists a morphism
\[
\Phi_M : M \longrightarrow Q_\tau(M)
\]
in \(EM_T\) defined by setting
\[
\Phi_M(TG_i)(f) = \eta_i\bigl(p \circ f\bigr),
\]
where
\[
\eta_i : EM_T(TG_i,\, M / M_\tau) \longrightarrow 
\varinjlim_{I \in \mathcal{L}_{TG_i}} EM_T\!\bigl(I,\, M / M_\tau\bigr)
\]
is the canonical morphism. Further, it is clear from \cite[\S 2.2]{GG} that $\Phi$ is functorial in $M$, i.e., for any morphism \(g : TG_j \longrightarrow TG_i\) in \(EM_T\), there is an induced morphism
			\[
			Q_\tau(M)(g) : Q_\tau(M)(TG_i) \longrightarrow Q_\tau(M)(TG_j),
			\]
			such that the following equality holds:
			\begin{equation}
				Q_{\tau}(M)(g)\circ \Phi_{M}(TG_i)(g)=\Phi_{M}(TG_j)(g)\circ EM_T(-,M)(g).
			\end{equation}
			From \cite{GG}, we also know that $\ker(\Phi_M)$ and $\operatorname{coker}(\Phi_M)$ are $\tau$-torsion in $EM_T$. Furthermore, since $\Phi_M$ is functorial and $Q_{\tau}(M)=Q_{\tau}(M/M_\tau)$ we have the following commutative diagram 
            \begin{equation*}
                \begin{tikzcd}[row sep=3.8em, column sep = 7em]
                  M \arrow[r, "p"] \arrow[d, "\Phi_M"'] 
                  & M/M_\tau \arrow[d, "\Phi_{M/M_\tau}"] \\
                 Q_\tau(M) \arrow[r, "\cong"] 
                  & Q_\tau(M/M_\tau),
                \end{tikzcd}
            \end{equation*}
            
          i.e,$\Phi_M(M_\tau)=\Phi_{M/M_\tau}\circ p(M_\tau)=0$, which implies $M_\tau \subseteq\operatorname{ker}(\Phi_M)$. Therefore, $\operatorname{ker}(\Phi_M)=M_\tau.$
          
			\smallskip
			In this section, we consider a higher differential monad \((T, \theta, \zeta, \Delta)\) of order $n$ on $\mathscr C$ where $T$ is exact and preserve colimits. Given a higher $\Delta$-derivation of order $n$ on an object $M \in EM_T$, we now extend it to the module of quotients $Q_{\tau}(M)$. From now on, we assume that $\tau$ is a higher differential torsion theory of order $n$. Then by Theorem~\ref{T3.4}, the Gabriel filter corresponding to $\tau$ is $\Delta$-invariant of order $n$.
            
            \begin{rem} Note that all the results established in this section continue to hold for higher differential
monad of infinite order. We therefore treat only the
finite order case, as the proofs in the general case are entirely analogous.
            \end{rem}
			
			\begin{lem}\label{M}
				Let $(M,f_M)\in EM_T$ be a torsion-free module equipped with a higher $\Delta$-derivation $D$ of order $n$ and let $f\in Q_{\tau}(M)(TG_i)$ be a morphism represented by $f:I\longrightarrow M$ for some $I\in \mathcal{L}_{TG_i}$. Let  $J\in\mathcal{L}_{TG_i}$ be such that $(\Delta_k)_{G_i}(J)\subseteq I $ for all $k=0,1,\ldots,n$. Set $K = I \cap J$. Define maps $\widetilde{D}_k f : K \longrightarrow M$ by 
                \begin{equation}\label{eq4.1} \widetilde{D}_k f(K) = \begin{cases} 
      f(K) & \text{for} ~k=0,\\
           D_1 f(K) - f\!\left((\Delta_1)_{G_i}(K)\right) &  \text{for} ~k=1,\\
          D_k\!\bigl(f(K)\bigr)
 - f\!\left((\Delta_k)_{G_i}(K)\right)
 - \sum_{\substack{t+j=k \\ t,j>0}}
   \widetilde{D}_t f\!\left((\Delta_j)_{G_i}(K)\right) & \text{for all } 2\leq k\leq n.
       \end{cases}
\end{equation}

Then, for every $0 \le k \le n$, the maps $\widetilde{D}_k f$ are morphisms in $Q_{\tau}(M)(T G_i)$.
			\end{lem}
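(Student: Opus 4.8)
The plan is to reduce the statement to the claim that each $\widetilde{D}_k f$ is, after restriction to a small enough member of $\mathcal L_{TG_i}$, a $T$-module morphism into $M$, and then to prove this by induction on $k$. First I would record the standing reductions. Since $(M,f_M)$ is torsion-free, $M_\tau=0$, hence $M/M_\tau=M$ and $Q_\tau(M)(TG_i)=\varinjlim_{I'\in\mathcal L_{TG_i}}EM_T(I',M)$; so an element of $Q_\tau(M)(TG_i)$ is exactly the class of a $T$-module morphism $I'\to M$ with $I'\in\mathcal L_{TG_i}$. As $\tau$ is higher differential of order $n$, Theorem~\ref{T3.4} gives that $\mathcal L$ is $\Delta$-invariant of order $n$, and Gabriel filters are closed under finite intersections (a standard consequence of Definition~\ref{D4.2}). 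The recursion \eqref{eq4.1} is to be read inside $Q_\tau(M)(TG_i)$: the $\widetilde{D}_t f$ are germs, and by $\Delta$-invariance of order $n$ all terms of \eqref{eq4.1} can be represented over a common member of $\mathcal L_{TG_i}$. Concretely, one fixes a descending chain $K=K_0\supseteq K_1\supseteq\cdots\supseteq K_n$ in $\mathcal L_{TG_i}$ with $(\Delta_j)_{G_i}(K_m)\subseteq K_{m-1}$ for all $1\le m\le n$ and $0\le j\le n$ (obtained by applying $\Delta$-invariance $n$ times and intersecting), and realizes $\widetilde{D}_k f$ as a morphism out of $K_k$; then $(\Delta_j)_{G_i}$ carries $K_k$ into $K_{k-1}\subseteq K_t$, the domain of $\widetilde{D}_t f$, whenever $t<k$, and into $I$, the domain of $f$, so every composite occurring in \eqref{eq4.1} is defined and the recursion is unambiguous. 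Since each $K_k\in\mathcal L_{TG_i}$, the resulting germs are independent of the chain.

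The heart of the argument is the induction on $k$ showing $\widetilde{D}_k f\colon K_k\to M$ is a $T$-module morphism. For $k=0$, $\widetilde{D}_0 f$ is the restriction of the $T$-module morphism $f$ to the submodule $K_0\subseteq I$. For $k=1$, $\widetilde{D}_1 f=D_1\circ f-f\circ(\Delta_1)_{G_i}$ on $K_1$, and that this is a $T$-module morphism is exactly the ordinary-derivation case treated in \cite{DA}; it also follows from a direct diagram chase using that $D_1$ is a $\Delta_1$-derivation on $M$ (diagram~\eqref{KKKK}), that $\Delta_1$ is a derivation on $T$, and that $f$ is a $T$-module morphism.

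For the inductive step ($2\le k\le n$), assume $\widetilde{D}_t f$ is a $T$-module morphism for all $t<k$; I must check $\widetilde{D}_k f\circ m_{K_k}=f_M\circ T(\widetilde{D}_k f)$, where $m_{K_k}\colon TK_k\to K_k$ is the module structure $K_k$ inherits from $\theta_{G_i}$. It is convenient to rewrite \eqref{eq4.1} in the equivalent form $D_k\circ f=\sum_{j=0}^{k}\widetilde{D}_{k-j}f\circ(\Delta_j)_{G_i}$ on $K_k$. The verification is a diagram chase that plays three such product-rule identities against one another: (a) the higher $\Delta$-derivation property of $D$ on $M$, $D_k\circ f_M=\sum_{i=0}^{k}f_M\circ TD_i\circ(\Delta_{k-i})_M$ (diagram~\eqref{eq33}); (b) the higher-derivation identity for $\Delta$ at $G_i$, $(\Delta_k)_{G_i}\circ\theta_{G_i}=\theta_{G_i}\circ\sum_{i=0}^{k}(\Delta_i\ast\Delta_{k-i})_{G_i}$ (diagram~\eqref{eqE}), which is what lets one push $(\Delta_j)_{G_i}$ past $m_{K_k}$; and (c) the facts that $f$ and each $\widetilde{D}_t f$ with $t<k$ are $T$-module morphisms, together with naturality of the $\Delta_i$, which let one push these maps past the module structures. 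Expanding $\widetilde{D}_k f$ via the rearranged form and then applying (a), (b), (c), the composites group by the index $j$: the $j=0$ part recombines — again via the rearranged form — into $f_M\circ T(\widetilde{D}_k f)$, while for each $1\le j\le k$ the two accumulated contributions cancel in pairs, precisely because of the rearranged form of the lower $\widetilde{D}_t f$ and the inductive hypothesis that they commute with the module structures. In the classical ring-and-module picture this is the identity $\widetilde{D}_k f(ax)=\sum_j\Omega_j(a)\,c_j$ with $\Omega$ the underlying higher derivation, $c_0=\widetilde{D}_k f(x)$ and $c_j=0$ for $j\ge1$, the vanishing being forced by the recursive definition, cf.\ \cite{PPB}. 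Equating the two sides yields $\widetilde{D}_k f\circ m_{K_k}=f_M\circ T(\widetilde{D}_k f)$, completing the induction.

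I expect the main obstacle to be bookkeeping rather than conceptual: tracking the multi-indexed composites that arise when (a), (b), (c) are applied together and checking that they match index-by-index so the off-diagonal ($j\ge1$) terms cancel. The well-definedness of the recursion addressed in the first paragraph is a secondary, minor point — one must keep the subobjects representing the various $\widetilde{D}_t f$ nested compatibly with the $\Delta$-invariance chain — but since only $n$ steps are needed this poses no real difficulty.
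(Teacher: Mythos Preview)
Your proposal is correct and takes essentially the same approach as the paper: induction on $k$, with the inductive step carried out by playing the higher-derivation identity for $D$ on $M$ (your (a)), the higher-derivation identity for $\Delta$ at $G_i$ (your (b)), and the $T$-linearity of $f$ and the lower $\widetilde{D}_t f$ (your (c)) against one another. The paper writes out the case $k=3$ explicitly as a representative computation and asserts the general case is analogous; your rearranged form $D_k\circ f=\sum_{j}\widetilde{D}_{k-j}f\circ(\Delta_j)_{G_i}$ and your descending chain $K_0\supseteq\cdots\supseteq K_n$ are minor bookkeeping improvements over the paper's single auxiliary $K'$, but the substance is identical.
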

			
			\begin{proof}
				Since $(K,f_k)\in \mathcal{L}_{TG_i}$ and $\mathcal{L}_{TG_i}$ is $\Delta$-invariant of order $n$, there exists an object $K'\in \mathcal{L}_{TG_i}$ such that $(\Delta_t)_{G_i}(K')\subseteq K$ for all $0\leq t\leq n$. We now proceed by induction on $n$.  Note that for $n=0$, $D_0f=f$,  the result is trivially true. For $n = 1$, since $(\Delta_1)_{G_i}$ is an ordinary derivation on $T$, the result follows directly from \cite[Lemma 4.1]{DA}. We now prove the result for $n=3$. The general case can then be established in an analogous manner.\\
                To prove that $\widetilde{D}_3f$ is a morphism in $Q_\tau (M)(TG_i)$, it is enough to show that the following diagram commutes
				\begin{equation*}
                    \begin{tikzcd}[row sep=3em, column sep=5.5em]
                       TK' \arrow[r, "Ti"] 
                           \arrow[d, "f_{K'}"'] 
                     & TK \arrow[r, "T(\widetilde{D}_3 f)"] 
                           \arrow[d, "f_{K}"'] 
                     & TM \arrow[d, "f_{M}"] \\
                        K' \arrow[r, "i"'] 
                     & K \arrow[r, "\widetilde{D}_3 f"'] 
                     & M,
                    \end{tikzcd}
                \end{equation*} 
				i.e., \begin{equation}\label{eq4.2}
					\widetilde{D}_3f(i(f_{K'}(TK')))=f_M(T(\widetilde{D}_3f)(Ti(TK')))~.
				\end{equation}
				By Definition in \eqref{eq4.1}, we have
				\begin{equation}
					\widetilde{D}_3f(i(f_{K'}(TK')))=D_3(f(i(f_{K'}(TK'))))-f((\Delta_3)_{G_i}(i(f_{K'}(TK')))-\sum_{\substack{t+j=3\\t,j{>}0}}\widetilde{D}_tf((\Delta_j)_{G_i}(i(f_{K'}(TK')))) .\label{eq:A}
				\end{equation}
				We see that the right-hand side of \eqref{eq4.2} gives
				\begin{align}
					f_M(T(\widetilde{D}_3f)(Ti(TK')))=f_M\circ T((\widetilde{D}_3f)(i(K')))
					&=f_M(T(D_3f(i(K'))-f((\Delta_3)_{G_i}(i(K')))-\sum_{\substack{t+j=3\\t,j{>}0}}\widetilde{D}_tf(\Delta_j)_{G_i}(i(K')))) \nonumber\\
					&= f_M(T(D_3f)(Ti(TK')))-f_M(Tf(T(\Delta_3)_{G_i})(Ti(TK')))\nonumber\\
					&\qquad-f_M\sum_{\substack{t+j=3\\t,j{>}0}}T(\widetilde{D}_tf)T(\Delta_j)_{G_i}(Ti(TK')). \label{eq:B}
				\end{align}
				
				We now consider the following commutative diagram 
				\begin{equation}
					\begin{tikzcd}[column sep={1cm,1cm,2cm},row sep=large] 
						TK' \arrow[r, "Ti"] \arrow[ "f_{K'}",d] & 
						TK \arrow["Tf",r] \arrow[ "f_K",d] & 
						TM \arrow[r, "{TD_3 + TD_2\circ(\Delta_1)_M + TD_1\circ(\Delta_2)_M + (\Delta_3)_M}"] \arrow[ "f_M",d] & 
						TM \arrow[ "f_M",d] \\
						K' \arrow[r, "i"] & 
						K \arrow[r, "f"] & 
						M \arrow[r, "D_3"] & 
						M,
					\end{tikzcd}
				\end{equation} 
                $i.e,$
                \begin{equation}\label{eq:D}
                    D_3(f(i(f_{k'}(TK'))))=f_M\circ (TD_3+ TD_2\circ(\Delta_1)_M + TD_1\circ(\Delta_2)_M + (\Delta_3)_M)\circ Tf\circ Ti(TK') .
                \end{equation}
            On substituting the value of $f_M(T(D_3(Tf(Ti(TK')))))$ in \eqref{eq:B} we obtain
            \begin{align}\label{ZZZ}
                f_M(T(\widetilde{D}_3f)(Ti(TK')))&=D_3(f(i(f_{k'}(TK'))))-f_M\circ ( TD_2\circ(\Delta_1)_M + TD_1\circ(\Delta_2)_M + (\Delta_3)_M)\circ Tf\circ Ti(TK')\notag\\
                \qquad&-f_M(Tf(T(\Delta_3)_{G_i})(Ti(TK')))-f_M\sum_{\substack{t+j=3\\t,j{>}0}}T(\widetilde{D}_tf)T(\Delta_j)_{G_i}(Ti(TK')).
            \end{align}
           From (\ref{eq:A}) and (\ref{ZZZ}), it suffices to verify that the following equation holds:
				\begin{align}\label{eq4.7}
					f((\Delta_3)_{G_i}(i(f_{K'}(TK')))+\sum_{\substack{t+j=3\\t,j{>}0}}\widetilde{D}_tf(\Delta_j)_{G_i}(i(f_{K'}(TK'))) &=f_M(TD_2\circ(\Delta_{1_M})+TD_1\circ((\Delta_2)_M)+(\Delta_3)_M)(Tf(Ti(TK')))\\\notag
					&\qquad +f_M(Tf(T(\Delta_3)_{G_i}(Ti(TK'))))+f_M\Big(\sum_{\substack{t+j=3\\t,j{>}0}}T(\widetilde{D}_tf)\circ T(\Delta_j)_{G_i}(Ti(TK'))\Big).
				\end{align}
				Since for all $t=0,1,\ldots,n,$ $(\Delta_t)_{G_i}(K')\subseteq K$ and $f_K$ , $(\Delta_t)_K$ are restrictions of $\theta_{G_i}$ and $(\Delta_t)_{TG_i}$ respectively, we get 
                \begin{align}
					(\Delta_k)_{G_i}(i(f_{K'}(TK')))&=(\Delta_k)_{G_i}(f_k(Ti(TK')))\notag\\
					&= (\Delta_k)_{G_i}(\theta_{G_i}(Ti(TK')))\notag\\
					&=\theta_{G_i}((\Delta_k)_{TG_i}+\ldots+T(\Delta_k)_{G_i})(Ti(TK'))\tag{by\eqref{eqE}}\\
					&=f_K((\Delta_k)_{TG_i}+\ldots+T(\Delta_k)_{G_i})(Ti(TK')).\label{A1}
				\end{align}
				Then, on the right-hand side (\textbf{RHS}) of \eqref{eq4.7}, we see that
				\begin{align}\label{RHS}
					&f_M\circ\left(TD_2\circ(\Delta_1)_M + TD_1\circ(\Delta_2)_M + (\Delta_3)_M\right)\circ\left(Tf(Ti(TK'))\right) + f_M\circ Tf (T(\Delta_3)_{G_i}(Ti(TK'))) + f_M(\sum_{\substack{t+j=3\\ t,j>0}} T(\widetilde{D}_tf)T(\Delta_j)_{G_i}(Ti(TK')))\notag \\
					&= f_M\circ\left(TD_2\circ(\Delta_1)_M + TD_1\circ(\Delta_2)_M\right)\circ\left(Tf(Ti(TK'))\right) + f_M\circ(\Delta_3)_M\circ(Tf(Ti(TK')))  + f_M\circ Tf\circ(T(\Delta_3)_{G_i}\circ(Ti(TK'))  \notag\\&\qquad \qquad +\sum_{\substack{t+j=3\\ t,j>0}} (\widetilde{D}_tf)\circ f_k \circ T(\Delta_j)_{G_i}\circ(Ti(TK')),
				\end{align}
                where the last equality holds from the fact that $(\widetilde{D}_tf)\circ f_k=f_M\circ T(\widetilde{D}_tf)$ for $t=1,2$.

                \smallskip
				On the other hand, the left-hand side (\textbf{LHS}) of \eqref{eq4.7} is
				\begin{align}\label{LHS}
					&f((\Delta_3)_{G_i}(i(f_{K'}(TK')))+\sum_{\substack{t+j=3\\t,j{>}0}}\widetilde{D}_tf(\Delta_j)_{G_i}(i(f_{K'}(TK')))\\
					&=f\circ f_K((\Delta_3)_{TG_i}+(\Delta_2)_{TG_i}\circ T(\Delta_1)_{G_i}+(\Delta_1)_{TG_i}\circ T(\Delta_2)_{G_i}+T(\Delta_3)_{G_i})\circ (Ti(TK'))\notag\\
					&\qquad+\sum_{\substack{t+j=3\\t,j{>}0}}(\widetilde{D}_tf)\circ f_K\circ((\Delta_j)_{TG_i}+(\Delta_{j-1})_{TG_i}\circ T(\Delta_1)_{G_i}+\ldots+T(\Delta_j)_{G_i})\circ(Ti(Tk')))\tag{by \eqref{A1}} \\
					&=f\circ f_K((\Delta_3)_{TG_i}+(\Delta_2)_{TG_i}\circ T(\Delta_1)_{G_i}+(\Delta_1)_{TG_i}\circ T(\Delta_2)_{G_i}+T(\Delta_3)_{G_i})\circ (Ti(TK'))\notag\\
					&\qquad+\sum_{\substack{t+j=3\\t,j{>}0}}(\widetilde{D}_tf)\circ f_K((\Delta_j)_{TG_i}+(\Delta_{j-1})_{TG_i}\circ T(\Delta_1)_{G_i}+...+(\Delta_{1})_{TG_i}\circ T(\Delta_{j-1})_{G_i})\circ (Ti(TK')))+\sum_{\substack{t+j=3\\t,j{>}0}}(\widetilde{D}_tf)\circ f_K\circ T(\Delta_{j})_{G_i}\circ (Ti(TK'))\notag \\
					&=f\circ f_K((\Delta_3)_{TG_i}+(\Delta_2)_{TG_i}\circ T(\Delta_1)_{G_i}+(\Delta_1)_{TG_i}\circ T(\Delta_2)_{G_i}+T(\Delta_3)_{G_i})\circ (Ti(TK'))\notag\\
					&\qquad+\sum_{\substack{t+j=3\\t,j{>}0}}f_M \circ T(\widetilde{D}_tf)\circ ((\Delta_j)_{TG_i}+(\Delta_{j-1})_{TG_i}\circ T(\Delta_1)_{G_i}+\ldots+(\Delta_1)_{TG_i}\circ T(\Delta_{j-1})_{G_i})\circ (Ti(Tk')))\tag{{by induction}}\\
					&\qquad+\sum_{\substack{t+j=3\\t,j{>}0}}(\widetilde{D}_tf)\circ f_K\circ (T(\Delta_j)_{G_i})\circ (Ti(TK'))\notag
                    \end{align}
				\begin{align*}
					&=f_M\circ Tf\circ ((\Delta_3)_{TG_i}+(\Delta_2)_{TG_i}\circ T(\Delta_1)_{G_i}+(\Delta_1)_{TG_i}\circ T(\Delta_2)_{G_i}+T(\Delta_3)_{G_i})\circ (Ti(TK'))\tag{$f\circ f_K=f_M\circ Tf$}\\
					&\qquad +f_M\circ T(\widetilde{D}_1f)\circ ((\Delta_2)_{TG_i}+(\Delta_1)_{TG_i}\circ T(\Delta_1)_{G_i})\circ (Ti(TK'))+f_M\circ T(\widetilde{D}_2f)\circ (\Delta_1)_{TG_i}\circ (Ti(TK'))\notag\\
					&\qquad+\sum_{\substack{t+j=3\\t,j{>}0}}(\widetilde{D}_tf)\circ f_K\circ (T\Delta_{j})_{G_i}\circ (Ti(TK'))\notag\\
					&=f_M\circ Tf\circ ((\Delta_3)_{TG_i}+(\Delta_2)_{TG_i}\circ T(\Delta_1)_{G_i}+(\Delta_1)_{TG_i}\circ T(\Delta_2)_{G_i}+T(\Delta_3)_{G_i})\circ (Ti(TK'))\notag\\
					&\qquad +f_M\circ T\Big(D_1f-f(\Delta_1)_{G_i}\Big)\circ ((\Delta_2)_{TG_i}+(\Delta_1)_{TG_i}\circ T(\Delta_1)_{G_i})\circ (Ti(TK'))+f_M\circ T\Big(D_2f-f(\Delta_2)_{G_i}-\widetilde{D_1}f(\Delta_{1})_{G_i}\Big)\circ (\Delta_1)_{TG_i})\notag\\&\qquad\circ (Ti(TK')) +\sum_{\substack{t+j=3\\t,j{>}0}}(\widetilde{D}_tf)\circ f_K\circ T(\Delta_{j})_{G_i}\circ (Ti(TK'))\notag\\
					&=f_M\circ Tf\circ ((\Delta_3)_{TG_i}+(\Delta_2)_{TG_i}\circ T(\Delta_1)_{G_i}+(\Delta_1)_{TG_i}\circ T(\Delta_2)_{G_i}+T(\Delta_3)_{G_i})\circ (Ti(TK'))\notag\\
					&\qquad+ f_M\circ TD_1\circ Tf\circ ((\Delta_2)_{TG_i}+(\Delta_1)_{TG_i}\circ T(\Delta_1)_{G_i})\circ (Ti(TK'))-f_M \circ Tf\circ T(\Delta_1)_{G_i}\circ ((\Delta_2)_{TG_i}\notag\\
					&\qquad+(\Delta_1)_{TG_i}\circ T(\Delta_1)_{G_i})\circ (Ti(TK'))+f_M\circ TD_2\circ Tf\circ (\Delta_1)_{TG_i}\circ (Ti(TK'))-f_M\circ Tf\circ T(\Delta_2)_{G_i}\circ (\Delta_1)_{TG_i}\circ (Ti(TK'))\notag\\
					&\qquad-f_M\circ T(\widetilde{D_1}f)\circ T(\Delta_{1})_{G_i}\circ (\Delta_1)_{TG_i}\circ (Ti(TK'))+\sum_{\substack{t+j=3\\t,j{>}0}}(\widetilde{D}_tf)\circ f_K\circ T(\Delta_j)_{G_i}\circ (Ti(TK'))\notag\\               &=\textbf{RHS}+f_M\circ Tf\circ ((\Delta_2)_{TG_i}\circ T(\Delta_1)_{G_i}+(\Delta_1)_{TG_i}\circ T(\Delta_2)_{G_i})\circ (Ti(TK'))+f_M\circ TD_1\circ Tf\circ(\Delta_1)_{TG_i}\circ T(\Delta_1)_{G_i}\circ (Ti(TK'))\notag\\
					&\qquad-f_M \circ Tf\circ T(\Delta_1)_{G_i}\circ ((\Delta_2)_{TG_i}+(\Delta_1)_{TG_i}\circ T(\Delta_1)_{G_i})\circ (Ti(TK'))-f_M\circ Tf\circ T(\Delta_2)_{G_i}\circ (\Delta_1)_{TG_i}\circ (Ti(TK'))\notag\\
					&\qquad- f_M\circ TD_1\circ Tf\circ T(\Delta_{1})_{G_i}\circ (\Delta_1)_{TG_i}\circ (Ti(TK'))+f_M\circ Tf\circ T(\Delta_1)_{G_i}\circ T(\Delta_{1})_{G_i}\circ (\Delta_1)_{TG_i}\circ (Ti(TK'))\tag{by \eqref{RHS}} \\
                    &=\textbf{RHS}+0.\tag{as~ $(\Delta_l)_{TG_i}\circ T(\Delta_m)_{G_i}=T(\Delta_m)_{G_i}\circ (\Delta_l)_{TG_i}$}
				\end{align*}
				This completes the proof. 
			\end{proof}
            \smallskip
           Therefore, by Lemma \ref{M}, there exists a family of morphisms $\widetilde{D}:=\{\widetilde{D}_k:Q_\tau (M)\longrightarrow Q_\tau (M)\}_{k=1}^n$ in $\mathscr{C}$ defined as follows:
            \begin{equation}\label{SD}
            \widetilde{D}_k(TG_i):Q_\tau (M)(TG_i)\longrightarrow Q_\tau (M)(TG_i),\hspace{.5cm}f\mapsto \widetilde{D}_kf 
            \end{equation}
            for $M\in EM_T $ and $f\in Q_\tau (M)(TG_i).$
			\begin{lem}\label{K2}
				Let $(M,f_M)\in EM_T$ be a torsion free module and $D$ be a $\Delta$-derivation of order $n$ on $M.$ If $\widetilde{D}:=\{\widetilde{D_k}:Q_\tau (M)\longrightarrow Q_\tau (M)\}_{k=0}^n$ is the family of morphism as defined in \eqref{SD}, then the following diagram commutes 
				\begin{equation}\label{eq3.5*}
					\begin{tikzcd}[row sep=3.8em, column sep = 8em]
						M \arrow{r}{\Phi_M }\arrow{d}{D_n} & Q_{\tau}(M) \arrow{d}{\widetilde{D_n}}\\
						M \arrow{r}{\Phi_M}& Q_{\tau}(M)
					\end{tikzcd}
				\end{equation}
                for all $k=0,1,\ldots,n.$
			\end{lem}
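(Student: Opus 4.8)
The plan is to prove the commutativity of \eqref{eq3.5*} for every $k\in\{0,1,\dots,n\}$ by induction on $k$, reducing at each stage to an identity between representatives in the colimit defining $Q_\tau(M)(TG_i)$. Since $M$ is torsion-free we have $M_\tau=0$, so $p\colon M\to M/M_\tau$ is the identity, $Q_\tau(M)(TG_i)=\varinjlim_{I\in\mathcal L_{TG_i}}EM_T(I,M)$, the morphism $\Phi_M$ is a monomorphism, and $\Phi_M(TG_i)(f)=\eta_i(f)$ is the class of $f\colon TG_i\to M$. Because $\{G_i\}_{i\in\Lambda}$ generates $\mathscr C$, the two $\mathscr C$-morphisms $\widetilde D_k\circ\Phi_M$ and $\Phi_M\circ D_k$ agree as soon as they agree after precomposition with every $g\colon G_i\to M$; writing $\widehat g:=f_M\circ Tg\colon TG_i\to M$ for the adjunct of $g$ under \eqref{adj} and transporting along \eqref{adj} (using $\Phi_M\in EM_T$ and the definition \eqref{SD} of the point-maps $f\mapsto\widetilde D_kf$), the required identity becomes $[\widetilde D_k\widehat g]=[f_M\circ TD_k\circ Tg]$ in $Q_\tau(M)(TG_i)$, where $\widetilde D_k\widehat g$ is obtained from \eqref{eq4.1} applied to the representative $\widehat g$ with the admissible choice $J=K=TG_i$ (valid because $(\Delta_t)_{G_i}(TG_i)\subseteq TG_i$, and independent of this choice by Lemma~\ref{M}).

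The base cases are direct. For $k=0$, both $\widetilde D_0$ and $D_0$ are identities. For $k=1$, \eqref{eq4.1} gives $\widetilde D_1\widehat g=D_1\circ f_M\circ Tg-f_M\circ Tg\circ(\Delta_1)_{G_i}$; substituting the order-one case of \eqref{eq33}, namely $D_1\circ f_M=f_M\circ(TD_1+(\Delta_1)_M)$, together with the naturality identity $(\Delta_1)_M\circ Tg=Tg\circ(\Delta_1)_{G_i}$, produces $\widetilde D_1\widehat g=f_M\circ TD_1\circ Tg$ on the nose, which recovers the ordinary-derivation case already handled in \cite{DA}.

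For the inductive step, fix $k\ge 2$ and assume the statement for all smaller indices. Expanding $\widetilde D_k\widehat g$ by \eqref{eq4.1}, substituting $D_k\circ f_M=f_M\circ\sum_{s=0}^{k}TD_{k-s}\circ(\Delta_s)_M$ from \eqref{eq33} together with $(\Delta_s)_M\circ Tg=Tg\circ(\Delta_s)_{G_i}$, one sees that the $s=k$ summand exactly cancels the correction term $-\,\widehat g\circ(\Delta_k)_{G_i}$, leaving
\[
\widetilde D_k\widehat g=f_M\circ TD_k\circ Tg+\sum_{s=1}^{k-1}\bigl(f_M\circ TD_{k-s}\circ Tg-\widetilde D_{k-s}\widehat g\bigr)\circ(\Delta_s)_{G_i}.
\]
By the induction hypothesis, $f_M\circ TD_{k-s}\circ Tg$ and $\widetilde D_{k-s}\widehat g$ represent the same element of $Q_\tau(M)(TG_i)$, hence agree on some $I_s\in\mathcal L_{TG_i}$. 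Since $\tau$ is higher differential of order $n$, Theorem~\ref{T3.4} shows $\mathcal L_{TG_i}$ is $\Delta$-invariant of order $n$, so we may choose $J_s\in\mathcal L_{TG_i}$ with $(\Delta_t)_{G_i}(J_s)\subseteq I_s$ for every $t$. Restricting everything to $I:=\bigcap_{s=1}^{k-1}J_s$, which is again in $\mathcal L_{TG_i}$ (Gabriel filters are closed under finite intersections), all the correction terms vanish on $I$, so $\widetilde D_k\widehat g$ and $f_M\circ TD_k\circ Tg$ agree on $I$ and therefore represent the same class. This gives $\widetilde D_k\circ\Phi_M=\Phi_M\circ D_k$ and closes the induction.

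The part I expect to be delicate is purely the bookkeeping: one must keep scrupulously apart the three avatars of $\widetilde D_k$ — as a $\mathscr C$-endomorphism of $Q_\tau(M)$, as the family of point-maps of \eqref{SD}, and as the recursion \eqref{eq4.1} evaluated on a chosen representative — and, above all, remember that the induction hypothesis supplies only agreement of classes in the colimit, i.e.\ agreement over some member of the Gabriel filter, not a literal equality of morphisms $TG_i\to M$. The twists by $(\Delta_s)_{G_i}$ in the correction terms are precisely what forces the use of $\Delta$-invariance (Theorem~\ref{T3.4}) to push that agreement through, and the argument for a higher differential monad of infinite order is identical.
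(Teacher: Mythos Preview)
Your proof is correct and follows essentially the same inductive scheme as the paper: pass to the adjunct $\widehat g=f_M\circ Tg\colon TG_i\to M$, expand $\widetilde D_k\widehat g$ via \eqref{eq4.1}, substitute the higher Leibniz rule \eqref{eq33} for $D_k\circ f_M$, and cancel using naturality of $\Delta_s$. The one difference is that you take an unnecessary detour through $\Delta$-invariance (Theorem~\ref{T3.4}): since $M$ is torsion-free, two morphisms $TG_i\to M$ with the same class in $Q_\tau(M)(TG_i)$ must agree on some $I\in\mathcal L_{TG_i}$, hence differ by a map factoring through the torsion object $TG_i/I$, hence are \emph{literally equal}; so your inductive hypothesis already gives $\widetilde D_{k-s}\widehat g=f_M\circ TD_{k-s}\circ Tg$ on the nose, the correction terms vanish exactly, and no restriction to smaller $I_s,J_s$ is needed. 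The paper exploits this and carries the induction at the level of honest equalities of morphisms $TG_i\to M$, which also avoids building in a dependence on the higher-differential hypothesis that Lemma~\ref{K2} does not actually require.
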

			\begin{proof}
            We consider a morphism $f: TG_i\longrightarrow M$ in $EM_T$. Then, there exists a corresponding morphism $\tilde{f}:G_i\longrightarrow M$ in $\mathscr{C}$ given by $\tilde{f}:G_i\xrightarrow{\zeta_{G_i}}TG_i\xrightarrow{f}M$. Further, $D_k\circ\tilde{f}:G_i\xrightarrow{\tilde{f}} M\xrightarrow{D_k} M$ is a morphism in $\mathscr{C}$, for which there exists a corresponding morphism $\hat{D}_k(f)\in EM_T(TG_i,M)$ given by $\hat{D}_k(f):TG_i\xrightarrow{T\zeta_{G_i}}TTG_i\xrightarrow{Tf}TM\xrightarrow{TD_k}TM\xrightarrow{f_M}M $.
			Note that to prove the commutativity of diagram \eqref{eq3.5*}, it is enough to show that the following diagram commutes 
				\begin{equation}\label{eqC}
					\begin{tikzcd}[row sep=3.8em, column sep = 8em]
						EM_T(TG_i,M)\arrow{r}{\Phi_M (TG_i)}\arrow{d}{\hat{D}_k(TG_i)} & Q_{\tau}(M)(TG_i) \arrow{d}{{\widetilde{D}_k(TG_i)}}\\
						EM_T(TG_i,M) \arrow{r}{\Phi_M(TG_i)}& Q_{\tau}(M)(TG_i)
					\end{tikzcd}
				\end{equation}  
                for all $k = 0,1,\ldots,n$.\\
				We proceed by induction on $n$. For $k = 0$, the diagram \eqref{eqC} commutes trivially, and for $k = 1$, its commutativity follows from \cite[Lemma~4.1]{DA}. We now assume that the diagram \eqref{eqC} commutes for all  $k < n$. Note that
				\begin{align}
					\widetilde{D}_n(f)(TG_i)&=D_n(f(TG_i))-f((\Delta_n)_{Gi}(TG_i))-\sum_{\substack{t+j=n\\t,j{>}0}}\widetilde{D}_tf((\Delta_j)_{G_i}(TG_i))\notag\\
					&=D_n(f(1_{TG_i}( TG_i)))-f((\Delta_n)_{Gi}(TG_i))-\sum_{\substack{t+j=n\\t,j{>}0}}\widetilde{D}_tf((\Delta_j)_{G_i}(TG_i))\notag\\
					&=D_n(f(\theta_{G_i}\circ T\zeta_{G_i}(TG_i)))-f((\Delta_n)_{Gi}(TG_i))-\sum_{\substack{t+j=n\\t,j{>}0}}\widetilde{D}_tf((\Delta_j)_{G_i}(TG_i))\tag{$\theta_{G_i}\circ T\zeta_{G_i}=1_{TG_i}$}\\
					&=D_n(f_M(Tf( T\zeta_{G_i}(TG_i))))-f((\Delta_n)_{Gi}(TG_i))-\sum_{\substack{t+j=n\\t,j{>}0}}\widetilde{D}_tf((\Delta_j)_{G_i}(TG_i))\tag{$f\circ \theta_{G_i}=f_M\circ Tf$}\\
					&=f_M(TD_n+TD_{n-1}\circ (\Delta_1)_M+\dots+(\Delta_n)_M)\circ (Tf( T\zeta_{G_i}(TG_i)))-f((\Delta_n)_{Gi}(TG_i))\tag{by \eqref{eq3.1}}\\
					&\qquad -\sum_{\substack{t+j=n\\t,j{>}0}}\widetilde{D}_tf((\Delta_j)_{G_i}(TG_i)).\label{S}
				\end{align}
				Further, we see that
				\begin{align*}
					f_M((\Delta_n)_M(Tf( T\zeta_{G_i}(TG_i))))&=f_M(Tf((\Delta_n)_{TG_i}(T\zeta_{G_i}(TG_i))))\tag{$(\Delta_n)_M\circ Tf=Tf\circ (\Delta_n)_{TG_i}$}\\&=f_M(Tf(T\zeta_{G_i}(\Delta_n)_{G_i}(TG_i)))\tag{$(\Delta_n)_{TG_i}\circ T\zeta_{G_i}=T\zeta_{G_i}\circ (\Delta_n)_{G_i}$}\\&=f((\Delta_n)_{G_i}(TG_i))\tag{$f_M\circ Tf\circ T\zeta_{G_i}=f$}.
				\end{align*}
				Now, for $0<k<n$, we get
				\begin{align*}
					\widetilde{D}_kf((\Delta_{n-k})_{G_i}(TG_i))&=\hat{D}_kf\circ ((\Delta_{n-k})_{G_i}(TG_i))\tag{by induction}\\&=f_M\circ TD_k\circ Tf \circ T\zeta_{G_i}\circ ((\Delta_{n-k})_{G_i}(TG_i))\tag{$\hat{D}_kf=f_{M}\circ Tf\circ T\zeta_{G_i}$}\\&=f_M\circ TD_k\circ Tf \circ (\Delta_{n-k})_{TG_i}\circ T\zeta_{G_i}\circ (TG_i)\tag{$T\zeta_{G_i}\circ (\Delta_{n-k})_{G_i}=(\Delta_{n-k})_{TG_i}\circ T\zeta_{G_i}$}\\&=f_M\circ TD_k\circ (\Delta_{n-k})_{M} \circ Tf \circ T\zeta_{G_i}\circ (TG_i).
				\end{align*}
				Finally on substituting the above expressions in (\ref{S}), we obtain
				\begin{align*}
					\widetilde{D}_n(f)(TG_i)&=f_M(TD_n(Tf(T\zeta_{G_i}(TG_i))))\\&=\hat{D}_n(f)(TG_i).
				\end{align*}
				Therefore $\widetilde{D}_n(f)(TG_i)=\hat{D}_n(f)(TG_i)$ for all $n$, and consequently the diagram in (\ref{eqC}) commutes for all $0\leq k\leq n$.
			\end{proof}
			\begin{Thm}\label{K}
				Let $(T,\theta,\zeta,\Delta)$ be a higher differential monad of order $n$ on $\mathscr{C}$ that is exact and preserves colimits and let $\tau=(\mathscr{T},\mathscr{F})$ be a hereditary differential torsion theory of order $n$ on $EM_T$. Let $(M,f_M)\in EM_T$ be a torsion-free module equipped with a higher $\Delta$-derivation $D$ of order $n$. Then the family of morphisms $\widetilde{D}=\{\widetilde{D_k}:Q_\tau (M)\longrightarrow Q_\tau (M)\}_{k=0}^n$ where $\widetilde{D}_0=Id_{Q_\tau (M)},$ the identity morphism on $Q_\tau (M)$ in $\mathscr{C}$ is a higher $\Delta$-derivation on $(Q_\tau (M),f_{Q_\tau (M)})$  of order $n$.
			\end{Thm}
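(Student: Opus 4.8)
The plan is to check, for each $k=0,1,\dots,n$, the defining diagram of Definition \ref{DM} for the family $\widetilde D=\{\widetilde D_k\}_{k=0}^{n}$ on the module $Q_\tau(M)$; the morphisms $\widetilde D_k:Q_\tau(M)\to Q_\tau(M)$ in $\mathscr C$ are already available from Lemma \ref{M} and \eqref{SD}, with $\widetilde D_0=Id_{Q_\tau(M)}$, so the only thing left to prove is the identity
\[
\widetilde D_k\circ f_{Q_\tau(M)}=f_{Q_\tau(M)}\circ\Big(\textstyle\sum_{j=0}^{k}T\widetilde D_{k-j}\circ(\Delta_j)_{Q_\tau(M)}\Big),\qquad k=0,\dots,n .
\]
For $k=0$ this is trivial, and for $k=1$ it says precisely that $\widetilde D_1$ is an ordinary $\Delta_1$-derivation on $Q_\tau(M)$, which is \cite[Lemma~4.1]{DA}. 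For the remaining indices (no further induction on $k$ is needed) I would use four facts: Lemma \ref{K2}, which gives $\widetilde D_j\circ\Phi_M=\Phi_M\circ D_j$ for all $0\le j\le n$; the naturality of each natural transformation $\Delta_j:T\Rightarrow T$ at $\Phi_M$, giving $(\Delta_j)_{Q_\tau(M)}\circ T\Phi_M=T\Phi_M\circ(\Delta_j)_M$; that $\Phi_M$ is a $T$-module morphism, so $f_{Q_\tau(M)}\circ T\Phi_M=\Phi_M\circ f_M$; and the hypothesis that $D$ is a higher $\Delta$-derivation of order $n$ on $M$ (recall $M$ is $\tau$-torsion-free, so $M/M_\tau=M$ and representatives of elements of $Q_\tau(M)(TG_i)$ are just morphisms $I\to M$).

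The key computation is to show that the two sides of the displayed identity agree after precomposition with $T\Phi_M:TM\to TQ_\tau(M)$. For the left side, the $T$-module property of $\Phi_M$ followed by Lemma \ref{K2} gives $\widetilde D_k\circ f_{Q_\tau(M)}\circ T\Phi_M=\widetilde D_k\circ\Phi_M\circ f_M=\Phi_M\circ D_k\circ f_M$. For the right side, pushing $T\Phi_M$ past each $(\Delta_j)_{Q_\tau(M)}$ by naturality, then rewriting $T\widetilde D_{k-j}\circ T\Phi_M=T(\widetilde D_{k-j}\circ\Phi_M)=T(\Phi_M\circ D_{k-j})=T\Phi_M\circ TD_{k-j}$ via Lemma \ref{K2}, and finally applying $f_{Q_\tau(M)}\circ T\Phi_M=\Phi_M\circ f_M$, the right side becomes $\Phi_M\circ f_M\circ\big(\sum_{j=0}^{k}TD_{k-j}\circ(\Delta_j)_M\big)$, and this equals $\Phi_M\circ D_k\circ f_M$ since $D$ satisfies the diagram of Definition \ref{DM} on $M$. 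Hence both legs of the diagram, once precomposed with $T\Phi_M$, coincide.

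It then remains to cancel $T\Phi_M$, and this is where I expect the real work to lie. Since $M$ is $\tau$-torsion-free, $\ker\Phi_M=M_\tau=0$, so $\Phi_M$ is a monomorphism and $\operatorname{coker}\Phi_M$ is $\tau$-torsion; as $T$ is exact and preserves colimits, $T\Phi_M$ is again a monomorphism and $\operatorname{coker}(T\Phi_M)\cong T(\operatorname{coker}\Phi_M)$, which is still $\tau$-torsion because the free functor $X\mapsto TX$ preserves $\tau$-torsion objects under our standing hypotheses on $T$ (one reads this off from the Gabriel-filter description of $\mathscr T$ on the generators $\{TG_i\}$, as in \cite{DA}). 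The difference of the two legs of the diagram is then a morphism $TQ_\tau(M)\to Q_\tau(M)$ vanishing on $\operatorname{im}(T\Phi_M)$, hence factoring through the $\tau$-torsion module $\operatorname{coker}(T\Phi_M)$ into the $\tau$-torsion-free module $Q_\tau(M)$; such a morphism is zero, so the two legs coincide, proving the identity for every $k$ and hence the theorem. The main obstacle is thus precisely this cancellation, namely establishing that $T\Phi_M$ remains a $\tau$-dense monomorphism; a longer but more elementary alternative would be to verify the displayed identity directly on representatives $f:I\to M$, extending the computation carried out in the proof of Lemma \ref{M}.
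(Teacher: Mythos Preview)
Your core computation is exactly what the paper does: precompose the desired identity with $T\Phi_M$ and use Lemma~\ref{K2}, naturality of each $\Delta_j$ at $\Phi_M$, and the $EM_T$-morphism property of $\Phi_M$ to reduce to the higher-derivation identity for $D$ on $M$. The paper further precomposes with $Tg$ for a representative $g:I\to M$, but that extra step is immaterial to the calculation.

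The genuine gap is in your cancellation of $T\Phi_M$. Two independent problems arise. First, the free functor does \emph{not} send $\tau$-torsion modules to $\tau$-torsion modules in general: take $\mathscr C=\mathbb Z\text{-Mod}$, $T=A\otimes_{\mathbb Z}(-)$ for a ring $A$, and any faithful hereditary torsion theory on $A\text{-Mod}$; if $C$ is a nonzero $\tau$-torsion $A$-module then $TC=A\otimes_{\mathbb Z}C$, with its free left $A$-module structure, is a nonzero free $A$-module and hence $\tau$-torsion-free, not $\tau$-torsion. Second, even granting that $\operatorname{coker}(T\Phi_M)$ were $\tau$-torsion, the difference of the two legs is only a morphism in $\mathscr C$, not in $EM_T$, so the vanishing condition $EM_T(\mathscr T,\mathscr F)=0$ does not force it to be zero. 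You correctly flag this cancellation as ``the main obstacle''; the paper, for its part, simply asserts that testing after precomposition with the maps $T\Phi_M\circ Tg$ is ``enough'' without explaining why such maps detect equality on all of $TQ_\tau(M)$ (their images lie inside $\operatorname{im}(T\Phi_M)$). So both arguments leave the same point unaddressed, and the ``direct verification on representatives'' alternative you mention at the end is precisely the route the paper takes.
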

			\begin{proof}
				Let $g\in Q_{\tau}(M)(TG_i)$ be represented by the morphism $g:I\longrightarrow M$ for some $(I,g_I)\in \mathcal{L}_{TG_i}$. To prove that $\widetilde{D}$ is a higher derivation on $Q_\tau (M)$, we show that the following diagram commutes in $\mathscr{C}$ for all $k=0,1,\ldots,n$. 
				\begin{equation}\label{eq4.12}
					\begin{tikzcd}[column sep=12em, row sep=large]
						TI \arrow[r, "Tg", shorten >=1pt, shorten <=1pt] \arrow[d, "g_I"'] & 
						TM \arrow[r, "T\Phi_{M}", shorten >=1pt, shorten <=1pt] \arrow[d, "f_M"'] & 
						TQ_{\tau}(M) 
						\arrow[r, 
						"{T\widetilde{D}_k + T\widetilde{D}_{k-1}\circ (\Delta_1)_{Q_{\tau}(M)} + \dots + (\Delta_k)_{Q_\tau(M)}}", 
						shorten >=0pt, 
						shorten <=0pt, 
						column sep=5cm] 
						\arrow[d, "f_{Q_{\tau}(M)}"'] & 
						TQ_{\tau}(M) \arrow[d, "f_{Q_{\tau}(M)}"] \\
						I \arrow[r, "g"] & 
						M \arrow[r, "\Phi_M"] & 
						Q_{\tau}(M) \arrow[r, "\widetilde{D}_k"] & 
						Q_{\tau}(M)
					\end{tikzcd}
				\end{equation}
				It is easy to note that the first two squares in diagram \eqref{eq4.12} commute. It is now enough to show that the following equality holds for all $k=0,1,\ldots,n$. 
				\begin{equation}
					\widetilde{D}_k\circ \Phi_M \circ g\circ g_I=f_{Q_{\tau}(M)}\circ (T\widetilde{D}_k+T\widetilde{D}_{k-1}\circ(\Delta_1)_{Q_{\tau}(M)}+\dots+(\Delta_k)_{Q_\tau(M)})\circ T\Phi_M\circ Tg.
				\end{equation}
				We note that $f_{Q_{\tau}(M)}\circ T\Phi_M=\Phi_M\circ f_M$ and $(\Delta_k)_{Q_{\tau}(M)}\circ T\Phi_M=T\Phi_M\circ (\Delta_k)_{M}$ for all $k=0,1,\ldots,n$. Further, from Lemma \ref{K2}, we have $\widetilde{D}_k\circ\Phi_M=\Phi_M \circ {D}_k$  for all $k=0,1,\ldots,n$. Therefore, we get
				\begin{align*}
					&f_{Q_{\tau}(M)}\circ (T\widetilde{D}_k+T\widetilde{D}_{k-1}\circ(\Delta_1)_{Q_{\tau}(M)}+...+(\Delta_k)_{Q_\tau(M)})\circ T\Phi_M\circ Tg \\
					&=\sum_{t=0}^kf_{Q_{\tau}(M)}\circ T\widetilde{D}_t\circ (\Delta_{k-t})_{Q_{\tau M}}\circ T\Phi_M \circ Tg\\
					&=\sum_{t=0}^kf_{Q_{\tau}(M)}\circ T\widetilde{D}_t\circ T\Phi_M\circ (\Delta_{k-t})_M\circ Tg\tag{$(\Delta_{k-t})_{Q_{\tau}(M)}\circ T\Phi_M=T\Phi_M\circ (\Delta_{k-t})_{M}$}\\
                    &=\sum_{t=0}^kf_{Q_{\tau}(M)}\circ T\Phi_M\circ T{D}_t\circ  (\Delta_{k-t})_M\circ Tg\tag{ $\widetilde{D}_k\circ\Phi_M=\Phi_M \circ {D}_k$}\\
					&=\sum_{t=0}^k \Phi_M\circ f_M\circ T{D}_t\circ  (\Delta_{k-t})_M\circ Tg\tag{$f_{Q_{\tau}(M)}\circ T\Phi_M=\Phi_M\circ f_M$}\\
					&=\Phi_M \circ f_M\circ (TD_k+TD_{k-1}\circ(\Delta_1)_M+\ldots+(\Delta_k)_M)\circ Tg \\
					&=\Phi_M\circ D_k \circ f_M\circ Tg\tag{$D$ is a higher derivation of order $n$ }\\
					&=\widetilde{D}_k\circ \Phi_M\circ f_M\circ Tg\tag{ $\widetilde{D}_k\circ\Phi_M=\Phi_M \circ {D}_k$}\\
					&=\widetilde{D}_k\circ \Phi_M\circ g\circ g_I\tag{$ f_M\circ Tg=g\circ g_I$}.
				\end{align*}The proof is now complete.
			\end{proof}
			By Lemma \ref{K2} and Theorem \ref{K},
			$\widetilde{D}$ is a higher $\Delta$-derivation of order $n$ on $Q_\tau(M)$, 
			which extends the higher derivation $D$ of order $n$ on $M$. In the following theorem, we show that 
			this lifting is uniquely determined.
			\begin{Thm}[Uniqueness of Extension of Higher $\Delta$-Derivation]
			    Let $(T,\theta,\zeta,\Delta)$ be a higher differential monad of order $n$ on $\mathscr C$ that is exact and preserves colimits and $\tau$ be a hereditary higher differential torsion theory of order $n$ on $EM_T$. Let $D: M \longrightarrow M$ be a higher $\Delta$-derivation of order $n$ on a torsion-free module $(M, f_M) \in EM_T$. Then the higher $\Delta$-derivation $\widetilde{D} : Q_{\tau}(M) \longrightarrow Q_{\tau}(M)$ of order $n$, obtained in Theorem (\ref{K}), is the unique higher $\Delta$-derivation of order $n$ on $Q_{\tau}(M)$, extending $D$. That is, $\widetilde{D}$ is the only higher $\Delta$-derivation of order $n$ satisfying Lemma \ref{K2}, i.e, \[
				\widetilde{D}_k \circ \Phi_M = \Phi_M \circ D_k
				\]  for all $0\leq k\leq n.$
			\end{Thm}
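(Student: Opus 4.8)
The plan is to prove uniqueness by induction on the order index $k$, reducing everything to the orthogonality $EM_T(\mathscr T,\mathscr F)=0$ between the torsion and torsion-free classes of $\tau$. First I would record the two structural facts that drive the argument. Since $(M,f_M)$ is torsion-free we have $\ker(\Phi_M)=M_\tau=0$, so $\Phi_M:M\longrightarrow Q_\tau(M)$ is a monomorphism in $EM_T$; moreover $\operatorname{coker}(\Phi_M)$ is $\tau$-torsion and, as is well known for the module of quotients (see \cite{GG}), $Q_\tau(M)$ is $\tau$-torsion-free. Consequently, any morphism $E:Q_\tau(M)\longrightarrow Q_\tau(M)$ in $EM_T$ that vanishes on the image of $\Phi_M$ must be zero: it factors through $\operatorname{coker}(\Phi_M)\in\mathscr T$, and $EM_T(\operatorname{coker}(\Phi_M),Q_\tau(M))=0$ because $Q_\tau(M)\in\mathscr F$.

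Now suppose $\widetilde D'=\{\widetilde D_k'\}_{k=0}^n$ is any higher $\Delta$-derivation of order $n$ on $Q_\tau(M)$ with $\widetilde D_k'\circ\Phi_M=\Phi_M\circ D_k$ for all $0\le k\le n$. Set $E_k=\widetilde D_k-\widetilde D_k':Q_\tau(M)\longrightarrow Q_\tau(M)$ in $\mathscr C$; I claim $E_k=0$ for every $k$, proceeding by induction. For $k=0$ both $\widetilde D_0$ and $\widetilde D_0'$ equal $Id_{Q_\tau(M)}$, so $E_0=0$. Assume $E_j=0$ for all $j<k$. Writing $Q=Q_\tau(M)$ and applying the defining diagram \eqref{eq3.1} (with $D$ replaced by $\widetilde D$, resp.\ $\widetilde D'$) at index $k$, both read
\[
\widetilde D_k\circ f_Q=f_Q\circ\sum_{j=0}^{k}T\widetilde D_{k-j}\circ(\Delta_j)_Q,\qquad
\widetilde D_k'\circ f_Q=f_Q\circ\sum_{j=0}^{k}T\widetilde D_{k-j}'\circ(\Delta_j)_Q,
\]
where the $j=0$ term is $T\widetilde D_k$ (since $(\Delta_0)_Q=Id$) and the $j=k$ term is $(\Delta_k)_Q$ (since $\widetilde D_0=Id$). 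Subtracting and using $E_{k-j}=0$ for $j\ge1$ by the inductive hypothesis, all summands collapse except the $j=0$ one, yielding $E_k\circ f_Q=f_Q\circ TE_k$. Thus $E_k$ is an endomorphism of $(Q_\tau(M),f_{Q_\tau(M)})$ in $EM_T$.

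Finally, $E_k\circ\Phi_M=\widetilde D_k\circ\Phi_M-\widetilde D_k'\circ\Phi_M=\Phi_M\circ D_k-\Phi_M\circ D_k=0$, so $E_k$ vanishes on $\operatorname{im}(\Phi_M)$ and therefore factors through $\operatorname{coker}(\Phi_M)$ in $EM_T$; by the torsion-theoretic orthogonality recalled above this forces $E_k=0$, completing the induction. Hence $\widetilde D_k=\widetilde D_k'$ for all $0\le k\le n$, so the extension of $D$ given by Theorem~\ref{K} is unique. The only genuinely delicate point is the cancellation in the displayed identities: one must line up the two defining diagrams so that, after substituting $\widetilde D_j=\widetilde D_j'$ for $j<k$, every term but the $j=0$ term disappears and the residual relation is exactly the $EM_T$-morphism condition; once that is in place the remaining steps are formal consequences of the torsion-freeness of $Q_\tau(M)$ and the vanishing of $EM_T(\mathscr T,\mathscr F)$.
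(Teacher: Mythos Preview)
Your proposal is correct and follows essentially the same approach as the paper: induct on $k$, use the defining relation of higher $\Delta$-derivations together with the inductive hypothesis to show that $E_k=\widetilde D_k-\widetilde D_k'$ is a morphism in $EM_T$, observe $E_k\circ\Phi_M=0$ so that $E_k$ factors through $\operatorname{coker}(\Phi_M)\in\mathscr T$, and conclude $E_k=0$ from $Q_\tau(M)\in\mathscr F$. The paper's proof is identical in structure; your extra remark that $\Phi_M$ is a monomorphism when $M$ is torsion-free is true but not actually used in the argument.
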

			\begin{proof}
				Let $\widetilde{D'}$ be another higher $\Delta$-derivation of order $n$ on $Q_{\tau}(M)$ that lifts the $\Delta$-derivation $D$ on $M$. We now proceed by induction on $n.$ For $n=0$, we have $\widetilde{D}_0 = \widetilde{D'}_0 = \operatorname{Id}$, and hence the statement holds for $n=0$.
				We now assume that the statement is true for all $k\leq n-1$. For $k=n$, we see that
				\begin{align*}
					(\widetilde{D}_n-\widetilde{D'}_n) \circ f_{Q_{\tau}(M)}
					&= f_{Q_{\tau}(M)} \circ \big( T\widetilde{D}_n + T\widetilde{D}_{n-1} \circ(\Delta_1)_{Q_{\tau}(M)} + T\widetilde{D}_{n-2} \circ(\Delta_2)_{Q_{\tau}(M)} + \ldots + \Delta_{n_{Q_{\tau}(M)}} \big) \\
					&\quad - f_{Q_{\tau}(M)} \circ \big( T\widetilde{D'}_n + T\widetilde{D'}_{n-1} \circ(\Delta_1)_{Q_{\tau}(M)} + T\widetilde{D'}_{n-2}\circ (\Delta_2)_{Q_{\tau}(M)} + \ldots + (\Delta_n)_{Q_{\tau}(M)} \big) \\
					&= f_{Q_{\tau}(M)} \circ T(\widetilde{D_n} - \widetilde{D_n'}).\tag{since $\widetilde{D}_k=\widetilde{D'}_k$ for all $k\leq n-1$}
				\end{align*}
				Hence $\widetilde{D}_n - \widetilde{D'}_n$ is a morphism on $Q_{\tau}(M)$ in $EM_T$. Since 
				\[
				(\widetilde{D}_n - \widetilde{D'}_n) \circ \Phi_M = \Phi_M \circ (D_n - D_n) = 0,
				\]
				there exists an induced morphism $\operatorname{Coker}(\Phi_M) \longrightarrow Q_{\tau}(M)$ in $EM_T$ such that $\widetilde{D}_n - \widetilde{D'}_n$ factors through it. By \cite[Proposition~2.4 and Theorem~2.5]{GG}, we have $\operatorname{Coker}(\Phi_M)\in \mathscr{T}$ and $Q_\tau(M)\in \mathscr{F}$. Therefore, the morphism $\operatorname{Coker}(\Phi_M)\longrightarrow Q_\tau(M)$ is the zero morphism and hence $\widetilde{D}_n - \widetilde{D'}_n = 0$, i.e., $\widetilde{D}_k=\widetilde{D'}_k$ for all $n$.
			\end{proof}
			\begin{Thm}\label{T5.5}
				Let $(T,\theta,\zeta,\Delta)$ be a higher differential monad of order $n$ on $\mathscr{C}$ that is exact and preserve colimits. Let $\tau$ be a hereditary torsion theory on $EM_T$ and $(M,f_M)$ be any object in $EM_T$. Then any higher $\Delta$-derivation $D: M\longrightarrow M$ of order $n$ can be extended to a unique higher $\Delta$-derivation $\widetilde{D}:Q_\tau (M)\longrightarrow Q_\tau (M)$ of order $n$ if and only if $\tau $ is a higher differential torsion theory of order n.
			\end{Thm}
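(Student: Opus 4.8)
The plan is to prove the two implications separately: the harder direction (``$\tau$ higher differential $\Rightarrow$ unique extension exists'') I would reduce to the torsion-free case already settled in Theorem~\ref{K} and the uniqueness result proved just above, by pushing $D$ forward along the canonical epimorphism $p\colon M\to M/M_\tau$; the converse I would dispose of by a short diagram chase built on the identity $\ker(\Phi_M)=M_\tau$.

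For the implication assuming $\tau$ is a higher differential torsion theory of order $n$, I would first invoke Theorem~\ref{T3.4} to conclude that the family of Gabriel filters attached to $\tau$ is $\Delta$-invariant of order $n$, so that Lemmas~\ref{M} and~\ref{K2} and Theorem~\ref{K} are all in force. Given $(M,f_M)\in EM_T$ and a higher $\Delta$-derivation $D=\{D_k\}_{k=0}^n$ of order $n$ on $M$, the hypothesis gives $D_k(M_\tau)\subseteq M_\tau$ for every $k$, so each $D_k$ descends along $p$ to a morphism $\overline{D}_k\colon M/M_\tau\to M/M_\tau$ with $\overline{D}_k\circ p=p\circ D_k$. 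The first real step is to check that $\overline{D}=\{\overline{D}_k\}_{k=0}^n$ is again a higher $\Delta$-derivation of order $n$ on the torsion-free module $M/M_\tau$: composing the defining diagram~\eqref{eq33} for $D$ with $p$ on the left and using $p\circ f_M=f_{M/M_\tau}\circ Tp$, the naturality of each $\Delta_j$, and $Tp\circ TD_{k}=T\overline{D}_{k}\circ Tp$, one reaches the defining diagram for $\overline{D}$ precomposed with $Tp$, which is epic since $T$ is exact and $p$ is epic, hence cancellable. Then Theorem~\ref{K} produces a higher $\Delta$-derivation $\widetilde{D}=\{\widetilde{D}_k\}_{k=0}^n$ of order $n$ on $Q_\tau(M/M_\tau)\cong Q_\tau(M)$ with $\widetilde{D}_k\circ\Phi_{M/M_\tau}=\Phi_{M/M_\tau}\circ\overline{D}_k$, and since $\Phi_M=\Phi_{M/M_\tau}\circ p$ under that identification one gets $\widetilde{D}_k\circ\Phi_M=\Phi_{M/M_\tau}\circ\overline{D}_k\circ p=\Phi_{M/M_\tau}\circ p\circ D_k=\Phi_M\circ D_k$, i.e.\ $\widetilde{D}$ extends $D$. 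For uniqueness, if $\widetilde{D}'$ is another order-$n$ higher $\Delta$-derivation on $Q_\tau(M)$ with $\widetilde{D}'_k\circ\Phi_M=\Phi_M\circ D_k$, then $\widetilde{D}'_k\circ\Phi_{M/M_\tau}\circ p=\Phi_{M/M_\tau}\circ\overline{D}_k\circ p$, and cancelling the epimorphism $p$ exhibits $\widetilde{D}'$ as an extension of $\overline{D}$; the uniqueness result for torsion-free modules then forces $\widetilde{D}'=\widetilde{D}$.

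For the converse, I would assume that every higher $\Delta$-derivation of order $n$ on every object of $EM_T$ extends to a higher $\Delta$-derivation of order $n$ on its module of quotients (existence alone suffices here). Fix $(M,f_M)\in EM_T$ and a higher $\Delta$-derivation $D=\{D_k\}_{k=0}^n$ on $M$, and let $\widetilde{D}$ be its extension, so $\widetilde{D}_k\circ\Phi_M=\Phi_M\circ D_k$ for all $k$. Recalling $\ker(\Phi_M)=M_\tau$ and writing $\iota\colon M_\tau\hookrightarrow M$, we have $\Phi_M\circ\iota=0$, hence $\Phi_M\circ(D_k\circ\iota)=\widetilde{D}_k\circ\Phi_M\circ\iota=0$; the universal property of the kernel then shows $D_k\circ\iota$ factors through $\ker(\Phi_M)=M_\tau$, i.e.\ $D_k(M_\tau)\subseteq M_\tau$. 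Since $(M,f_M)$, $D$ and $k\in\{0,\dots,n\}$ were arbitrary, $\tau$ is a higher differential torsion theory of order $n$.

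I expect the only genuine obstacle to lie in the first direction: verifying carefully that the descended family $\overline{D}$ really satisfies the defining diagram of a higher $\Delta$-derivation (this is where exactness of $T$ and epicness of $p$, hence of $Tp$, enter) and transporting the compatibility squares correctly across the canonical isomorphism $Q_\tau(M)\cong Q_\tau(M/M_\tau)$, so that both the constructed extension and any competitor are recognised as extensions of $\overline{D}$ on $M/M_\tau$. The converse is essentially formal once the already-established equality $\ker(\Phi_M)=M_\tau$ is in hand.
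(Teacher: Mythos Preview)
Your proposal is correct and follows essentially the same approach as the paper. For the converse direction (extension exists $\Rightarrow$ $\tau$ higher differential), your argument via $\ker(\Phi_M)=M_\tau$ is exactly the paper's proof; for the forward direction, the paper simply refers the reader to \cite[Theorem~4.5]{DA} for the details, and the argument you have spelled out---descending $D$ along $p$ to a higher $\Delta$-derivation on the torsion-free module $M/M_\tau$, applying Theorem~\ref{K} and the uniqueness result there, and transporting back along $Q_\tau(M)\cong Q_\tau(M/M_\tau)$---is precisely that reduction.
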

			\begin{proof}
				The sufficient part of this theorem is straightforward and follows by employing the same arguments and techniques as those presented in \cite[Theorem 4.5]{DA}. Therefore, we omit the details and refer the reader to the corresponding proof in the cited work. To prove the necessary part, we proceed with the higher $\Delta$-derivation $D$ on $M$ and its extension $\widetilde{D}$ of order $n$ on $Q_{\tau}(M)$. For  all $i$, we now consider the following commutative diagram
				\begin{equation}\label{eq3.5}
					\begin{tikzcd}[row sep=3.8em, column sep = 7em]
						M \arrow{r}{\Phi_M }\arrow{d}{D_i} & Q_{\tau}(M) \arrow{d}{\widetilde{D_i}}\\
						M \arrow{r}{\Phi_M}& Q_{\tau}(M)~.
					\end{tikzcd}
				\end{equation}
				 Let $N\subseteq M_\tau$. Since $M_{\tau}=\ker(\Phi_M)$,  $\Phi_M\circ D_i(N)=\widetilde{D}_i\circ \Phi_M(N)=0$ for all $i$. Therefore $D_i(N)\subseteq \text{ ker }(\Phi_M)=M_\tau $ for all $i$. Hence $\tau $ is a higher differential torsion theory of order n.
			\end{proof}

			\begin{bibdiv}
				\begin{biblist}
					\bib{DA}{article}{
						author={Ahuja, D.},
						author={Kour, S.},
						title={Differential Torsion Theories On Eilenberg-Moore categories of Monad},
						journal   = {Journal of Pure and Applied Algebra},
						volume    = {229},
						year      = {2025},
						pages     = {107910},

					}

					\bib{ABKR}{article}{
						author={Ahuja, D.},
						author={Banerjee, A.},
						author={Kour, S.},
						author={Ray, S.},
						title={Eilenberg-Moore categories and quiver representations of monads and comonads},
						eprint={2307.13201v2},
						
					}
					\bib{AK}{article}{
						author={Banerjee, A.},
						author={Kour, S.},
						TITLE = {{$(A,\delta)$}-modules, {H}ochschild homology and higher
							derivations},
						JOURNAL = {Ann. Mat. Pura Appl. (4)},
						VOLUME = {198},
						YEAR = {2019},
						NUMBER = {5},
						PAGES = {1781--1802},
						ISSN = {0373-3114,1618-1891},
						
					}
			           \bib{PPB}{article}{
						author={Paul,E Bland.},
						title={Higher Derivations on Rings and Modules},
						journal   = {International Journal of Mathematics and Mathematical Sciences},
						volume    = {2005},
						number    = {15},
						pages     = {2373--2387},
						year      = {2005},
					}	 
					\bib{AB}{article}{
						author={Banerjee, A.},
						title={On differential torsion theories and rings with several objects},
						journal={Canad. Math. Bull.},
						volume={62},
						date={2019},
						number={4},
						pages={703--714},
						issn={0008-4395},
					}
					\bib{BR}{article}{
						author={Beligiannis, A.},
						author={Reiten, I.},
						title={Homological and homotopical aspects of torsion theories},
						journal={Mem. Amer. Math. Soc.},
						volume={188},
						date={2007},
						number={883},
					}
					\bib{PB}{article}{
						author={Bland, P. E.},
						title={Differential torsion theory},
						journal={J. Pure Appl. Algebra},
						volume={204},
						date={2006},
						number={1},
						pages={1--8},
					}
					\bib{PB2}{book}{
						author={Bland, P. E.},
						title={Topics in torsion theory},
						series={Mathematical Research},
						volume={103},
						publisher={Wiley-VCH Verlag Berlin GmbH, Berlin},
						date={1998},
						pages={160},
					}
					\bib{HM}{article}{
						author = {Hoshino, M.},
						TITLE = {On {L}ambek torsion theories},
						journal = {Osaka J. Math.},
						volume = {29},
						YEAR = {1992},
						Number= {3},
						PAGES = {447--453},
						ISSN = {0030-6126},
					}
					\bib{Et}{book}{
						author={Etingof, P.},
						author={Gelaki, S.},
						author={Nikshych, D.},
						author={Ostrik, V.},
						title={Tensor categories},
						series={Mathematical Surveys and Monographs},
						volume={205},
						publisher={American Mathematical Society, Providence, RI},
						date={2015},
					}
					
					\bib{GG}{article}{
						author={Garkusha, G. A.},
						title={Grothendieck categories},
						language={Russian},
						journal={Algebra i Analiz},
						volume={13},
						date={2001},
						number={2},
						pages={1--68},
					}
					\bib{Gol2}{article}{
						author={Golan, J. S.},
						title={Extension of derivations to modules of quotients},
						journal={Comm. Algebra},
						volume={9},
						date={1981},
						number={3},
						pages={275--281},
					}
					\bib{Gol}{book}{
						author={Golan, J. S.},
						title={Torsion theories},
						series={Pitman Monographs and Surveys in Pure and Applied Mathematics},
						volume={29},
						publisher={Longman Scientific \& Technical, Harlow; John Wiley \& Sons,
							Inc., New York},
						date={1986},
						pages={xviii+651},
						
					}
					\bib{Gro}{article}{
						author={Grothendieck, A.},
						title={Sur quelques points d’alg\`ebre homologique},
						journal={Tohoku Math. J.(2)},
						volume={9},
						date={1957},
						pages={119--221},
					}	
					\bib{KS}{book}{
						author={Kashiwara, M.},
						author={Schapira, P.},
						title={Categories and Sheaves},
						publisher={Springer-Verlag, Berlin-Heidelberg},
						date={2006},
					}
					\bib{LB}{article}{
						author={Lomp, C.},
						author={van den Berg, J.},
						title={All hereditary torsion theories are differential},
						journal={J. Pure Appl. Algebra},
						volume={213},
						date={2009},
						number={4},
						pages={476--478},
					}
					\bib{LLV}{article}{
						author={L\'{o}pez, A. J.},
						author={L\'{o}pez L\'{o}pez, M. P.},
						author={N\'{o}voa, E. V.},
						title={Gabriel filters in Grothendieck categories},
						journal={Publ. Mat.},
						volume={36},
						date={1992},
						number={2A},
						pages={673--683 (1993)},
					}
					
					\bib{Mac}{book}{
						author={MacLane, S.},
						title={Categories for the working mathematician},
						series={Graduate Texts in Mathematics, Vol. 5},
						publisher={Springer-Verlag, New York-Berlin},
						date={1971},
					}
					\bib{MB}{article}{
						author={Mitchell, B.},
						title={Rings with several objects},
						journal={Advances in Math.},
						volume={8},
						date={1972},
						pages={1--161},
					}
					\bib{MIR} {article} {
						AUTHOR = {Mirzavaziri, M.},
						TITLE = {Characterization of higher derivations on algebras},
						JOURNAL = {Comm. Algebra},
						VOLUME = {38},
						YEAR = {2010},
						NUMBER = {3},
						PAGES = {981--987},
						ISSN = {0092-7872,1532-4125},
						}
					\bib{OK}{article}{
						author={Ohtake, K.},
						title={Colocalization and localization},
						journal={J. Pure Appl. Algebra},
						volume={11},
						date={1977/78},
						number={1-3},
						pages={217--241},

					}
					\bib{NP75}{book}{
						author={Popescu, N.},
						title={Abelian categories with applications to rings and modules},
						series={London Mathematical Society Monographs},
						volume={No. 3},
						publisher={Academic Press, London-New York},
						date={1973},
						pages={xii+467},
					}
					\bib{RP80}{article}{
						author={Ribenboim, P.},
						title={Higher order derivations of modules},
						note={Special issue in honor of Ant\'onio Monteiro},
						journal={Portugal. Math.},
						volume={39},
						date={1980},
						number={1-4},
						pages={381--397 (1985)},
					}
						\bib{RIM}{article}{
						author={S. H. Rim},
						title={Extensions of Higher Anti-Derivations to Modules of Quotients},
						journal   = {Kyungpook Mathematical Journal},
						volume    = {27},
						pages     = {39--46},
						date      = {1987}
						
					}
					\bib{BS}{book}{
						author={Stenstr\"{o}m, Bo},
						title={Rings of quotients},
						series={Die Grundlehren der mathematischen Wissenschaften},
						volume={Band 217},
						note={An introduction to methods of ring theory},
						publisher={Springer-Verlag, New York-Heidelberg},
						date={1975},
						pages={viii+309},
						
					}
					\bib{RS}{article}{
						author={Street, R.},
						title={Frobenius monads and pseudomonoids},
						journal={J. Math. Phys.},
						volume={45},
						date={2004},
						number={10},
						pages={3930--3948},
					}
				\end{biblist}
				
			\end{bibdiv}

		\end{document}